\documentclass[reqno,10pt]{amsart}
\usepackage{CJK}
\usepackage{hyperref}
\usepackage{cite}
\usepackage{amsmath}
\usepackage{mathrsfs}
\usepackage{extarrows}
\usepackage{indentfirst}
\usepackage{color}
\usepackage{pgfplots}

\pgfplotsset{compat=1.17}
\usepackage{amsmath}
\usepackage{subcaption}

\makeatletter
\@namedef{subjclassname@2020}{%
	\textup{2020} Mathematics Subject Classification}
\makeatother

\numberwithin{equation}{section}

\newtheorem{theorem}{Theorem}[section]
\newtheorem{lemma}[theorem]{Lemma}
\newtheorem{definition}[theorem]{Definition}

\newtheorem{proposition}[theorem]{Proposition}
\newtheorem{remark}[theorem]{Remark}
\newtheorem{corollary}[theorem]{Corollary}

\allowdisplaybreaks

\newcommand{ \mint }{ {\int\hspace{-0.38cm}-}}
\begin{document}
	
	\title[\hfil Regularity for parabolic perturbed fractional 1-Laplacian] {H\"{o}lder regularity for the parabolic perturbed fractional 1-Laplace equations}
	
		\author[D. Li and C. Zhang  \hfil \hfilneg]
		{Dingding Li  and Chao Zhang$^*$}
	
	\thanks{$^*$ Corresponding author.}
	
	\address{Dingding Li \hfill\break
			School of Mathematics, Harbin Institute of Technology, Harbin 150001, China
}
	\email{a87076322@163.com}

	\address{Chao Zhang  \hfill\break School of Mathematics and Institute for Advanced Study in Mathematics, Harbin Institute of Technology, Harbin 150001, China}
	\email{czhangmath@hit.edu.cn}

	\subjclass[2020]{35B65, 35D30, 35J60, 35R11}
	\keywords{H\"{o}lder regularity; perturbed fractional 1-Laplacian; nonhomogeneous growth.}
	
	\maketitle
	
\begin{abstract}
This paper studies the regularity of weak solutions to a class of parabolic perturbed fractional $1$-Laplace equations. Our analysis combines finite difference quotients, energy estimates, and iterative arguments, with a key step being the decomposition of the nonlocal integral into local and nonlocal components to handle their contributions separately. We aim to show the local H\"{o}lder continuity of weak solutions within the parabolic domain. More precisely, the solutions are spatially $\alpha$-H\"{o}lder continuous with $0<\alpha<\min\left\lbrace1, \frac{s_p p}{p-1} \right\rbrace$ and $\gamma$-H\"{o}lder continuous in time, where  the value of $\gamma$ is determined by the fractional differentiability indexes $s_1$, $s_p$ and the exponent $p$. For both the super-quadratic case ($p\ge 2$) and the sub-quadratic case ($1<p<2$), we establish the Sobolev regularity of solutions, which underpins the derivation of H\"{o}lder continuity. All estimates are quantitative and depend only on the structural parameters of the equation. To the best of our knowledge, this is the first attempt to develop a regularity theory for such nonlocal parabolic equations.
\end{abstract}

\section{Introduction}
\label{sec1}

In this paper, we consider the following parabolic perturbed fractional $1$-Laplace equation
\begin{align}
	\label{1.1}
	\partial_t u+(-\Delta_1)^{s_1}u+(-\Delta_p)^{s_p}u=0,
\end{align}
where $p\in (1, +\infty)$ and $s_1$, $s_p\in (0, 1)$. The operators $(-\Delta_1)^{s_1}$ and $(-\Delta_p)^{s_p}$ are defined in the sense of Cauchy principal value integrals as follows:
\begin{align*}
	(-\Delta_1)^{s_1}u(x,t):=2\,\mathrm{P.V.}\int_{\mathbb{R}^N}\frac{u(x,t)-u(y,t)}{|u(x,t)-u(y,t)|}\frac{\mathrm{d}y}{|x-y|^{N+s_1}}
\end{align*}
and
\begin{align*}
	(-\Delta_p)^{s_p}u(x,t):=2\,\mathrm{P.V.}\int_{\mathbb{R}^N}\frac{|u(x,t)-u(y,t)|^{p-2}(u(x,t)-u(y,t))}{|x-y|^{N+s_pp}}\,\mathrm{d}y
\end{align*}
for all $x\in \mathbb {R}^N$ and $t\in \mathbb {R}$.

\subsection{Overview of related results.}

Problems of the type considered in \eqref{1.1} were first introduced by G\'{o}rny, Maz\'{o}n and Toledo in their work \cite{GMT24} within the framework of random walk spaces. This model involves two distinct random walk structures, leading to functionals with different growth behaviors associated with each structure. The authors established a suitable notion of weak solutions and analyzed several fundamental properties of nonlocal evolution problems with $(1, p)$-growth. Equation \eqref{1.1} can also be viewed as a nonlocal counterpart of the classical parabolic perturbed $1$-Laplacian problem
\begin{align}
	\label{local}
	\partial_t u-\mathrm{div}\left( \frac{\nabla u}{|\nabla u|}\right)-\mathrm{div}\left( |\nabla u|^{p-2}\nabla u\right)=0,  
\end{align}
which arises in fluid mechanics \cite{DL76} and materials science \cite{S93}. Due to the lack of smoothness of the associated functional, the equations with $1$-growth have been studied only in relatively restricted settings. For the elliptic case of \eqref{local}, Tsubouchi in \cite{T21}  established the local Lipschitz continuity of weak solutions by means of a distinct strategy that differs from the approach in \cite{BM20}. Subsequently, by combining De Giorgi’s truncation technique with the freezing coefficients method, the authors in \cite{TG22, T24} obtained $C^1_\mathrm{loc}$-regularity results. For the parabolic case \eqref{local}, the continuity of the spatial gradient of weak solutions was further proven in \cite{T25, T24A, T25A}. 

Although the $(1, p)$-growth problem is structurally related to the well-known $(p, q)$-growth problem, it cannot be treated as a standard double phase model, primarily due to the fundamental difference in the representation of weak solutions. Since the functional associated with the $1$-structure is non-differentiable, one must select an element $Z$ from its subdifferential such that $Z\in\partial\left( |\nabla u|\right) $. In addition, the auxiliary quantity $Z$ may exhibit discontinuous behavior, owing to this characteristic, pointwise regularity cannot be expected for $Z$, which significantly restricts the regularity results that can be derived for the solutions when compared with classical $p$-Laplace equations.

We next briefly recall some developments in the study of nonlocal $p$-Laplace problems. Brasco et al. \cite{BL17,BLS18} developed the difference quotient method for the fractional $p$-Laplace equations
\begin{align}
	\label{e1}
	(-\Delta_p)^s u=0
\end{align}
as well as their parabolic counterpart \cite{BLS21}
\begin{align}
	\label{e2}
	\partial_tu+(-\Delta_p)^s u=0.
\end{align}
They identified a  critical  threshold $\frac{p-1}{p}$ for the existence of gradients of weak solutions: the gradient exists whenever $s>\frac{p-1}{p}$, while  the case $s\le\frac{p-1}{p}$ is considerably more delicate. Later, B\"{o}gelein et al. \cite{BDL2401, BDL2402} further lowered this threshold to $\frac{p-2}{p}$ for equation \eqref{e1} by extracting additional information from the nonlocal terms. As a consequence, the gradient of weak solutions always exists in the sub-quadratic case. Meanwhile, the regularity theory for elliptic equation \eqref{e1} has advanced significantly. Building on the Lipschitz continuity of viscosity solutions \cite{BT25} and the equivalence between weak and viscosity solutions \cite{KKL19}, it has been established that solutions are $C^{1, \alpha}_{\mathrm{loc}}$-regular; we refer to \cite{GJS25} for a comprehensive treatment and to \cite{JSU26} for the corresponding Lipschitz continuity result for the parabolic equation \eqref{e2}.

When it comes to the fractional $1$-Laplacian, significant contributions have been made by Maz\'{o}n and his collaborators. Existence results for the elliptic case can be found in \cite{B23, BDLM23, MPRT16, MRT19}, while those for the parabolic case are available in \cite{AMRT08, AMRT09, MRT16}. Under suitable structural assumptions, Bucur et al. \cite{BDLV25} proved that $s$-minimal functions belong to $C(\Omega)\cap L^\infty(\Omega)$ and admit continuous extensions.
Furthermore, Novaga and Onoue \cite{NO23} investigated the minimizers of a nonlocal variational model arising from image denoising. In two dimensions, they established that if the datum $f$ is locally $\beta$-H\"{o}lder continuous with $\beta\in(1-s, 1]$, then the minimizer inherits the same local H\"{o}lder regularity. Despite these substantial advances, regularity theory for the parabolic fractional $1$-Laplace type equations remains largely undeveloped and it is still  an open problem in the field.

Motivated by the aforementioned works and our previous studies \cite{LZ25, LZ2501}, the purpose of this paper is to investigate the regularity of weak solutions to problem \eqref{1.1}. To this end, we revisit and refine the difference quotient method, which enables us to extract effective Sobolev regularity information from the nonlocal structure of the equation and thereby establish the desired Hölder continuity estimates. The analysis is plagued by two essential difficulties. The first lies in the nonlocal formulation of the 1-growth term, which, unlike classical settings, prevents the direct use of mollified approximations to the local 1-Laplacian in the study of approximation equations related to \eqref{1.1}. The second concerns the auxiliary function $Z\in\mathrm{sgn}(u(x,t)-u(y,t))$, whose pointwise regularity is generally unavailable and difficult to control. Consequently, new tailored strategies must be developed to simultaneously cope with the local and nonlocal components of the $1$-growth term, which in turn poses a severe impact on the construction and convergence analysis of the subsequent iterative scheme.

\subsection{Main results}
Before stating our main results, we first introduce the notion of weak solutions to \eqref{1.1}.

\begin{definition}
	\label{def1}
	Let $I$ be an open time interval. We say that $u$ is a local weak solution to problem \eqref{1.1} in the bounded domain $\Omega_I:=\Omega\times I$, if
	\begin{align*}
		u\in L^1\bigl(J; W^{s_1, 1}_\mathrm{loc}(\Omega)\bigl)\cap L^p\bigl(J; W^{s_p, p}_\mathrm{loc}(\Omega)\bigl)\cap C\left(J; L^2(\Omega)\right)\cap L^{\infty}\bigl(J;L^{p-1}_{s_pp}(\mathbb{R}^N)\bigl)
	\end{align*}
	for any $J:=(T_0, T_1)\subset\subset I$ and there exists a function $Z\in \mathrm{sgn}\left( u(x,t)-u(y,t)\right)$ such that
	\begin{align}
		\label{1.2}
		0&=-\int_{T_0}^{T_1}\int_{\Omega}u(x,t)\varphi_t(x,t)\,\mathrm{d}x\mathrm{d}t+\int_{\Omega}u(x,T_1)\varphi(x,T_1)\,\mathrm{d}x-\int_{\Omega}u(x,T_0)\varphi(x,T_0)\,\mathrm{d}x\nonumber\\
		&\quad+\int_{T_0}^{T_1}\int_{\mathbb{R}^N}\int_{\mathbb{R}^N}Z\frac{\varphi(x,t)-\varphi(y,t)}{|x-y|^{N+s_1}}\,\mathrm{d}x\mathrm{d}y\mathrm{d}t\nonumber\\
		&\quad+\int_{T_0}^{T_1}\int_{\mathbb{R}^N}\int_{\mathbb{R}^N}\frac{J_p\left( u(x,t)-u(y,t)\right)\left( \varphi(x,t)-\varphi(y,t)\right)}{|x-y|^{N+s_pp}}\,\mathrm{d}x\mathrm{d}y\mathrm{d}t
	\end{align}
	for any $\varphi\in L^1\bigl(J;W^{s_1,1}_0(\Omega)\bigl)\cap L^p\bigl( J;W^{s_p,p}_0(\Omega)\bigl)\cap C^1\bigl(J;L^2(\Omega)\bigl)$.
\end{definition}

\begin{remark}
	\label{rem28}
	If one only studies the regularity of weak solutions with respect to the spatial variables, the assumption $u\in L^{p-1}\bigl(J;L^{p-1}_{s_p p}(\mathbb{R}^N)\bigl)$ is sufficient. However, in order to investigate the regularity of weak solutions in the time direction, we have to strengthen this assumption to $u\in L^{\infty}\bigl(J;L^{p-1}_{s_pp}(\mathbb{R}^N)\bigl)$. It is worth emphasizing that our assumption is still weaker than the global boundedness assumption in \cite{BLS21}, which necessitates the use of a completely different iteration strategy in our proof.
\end{remark}

For the sake of clarity, we introduce the quantity
\begin{align*}
	\mathcal{T}:=\|u\|_{L^\infty(B_{R}(x_0)\times(T_0,T_1))}+\operatorname*{ess\,sup}_{t\in(T_0,T_1)}\mathrm{Tail}(u;x_0,R,t),
\end{align*}
where $\operatorname{Tail}(u; x_0, R,t)$ denotes the nonlocal tail associated with the function $u(\cdot,t)$ at the point $x_0$ and radius $R$ (see Section \ref{sec2} for the precise definition).

\smallskip

Now we are ready to state our main result of this paper.

\begin{theorem}
	\label{th27}
	Let $p>1$, $s_1,s_p\in(0,1)$ and suppose that $u$ is a locally bounded weak solution to problem \eqref{1.1} in the sense of Definition \ref{def1}. Then, $u$ is locally H\"{o}lder continuous in $\Omega_I$. Moreover, for any $B_R(x_0)\subset\subset\Omega$ with $R\in(0,1)$, $r\in(0,R)$, $(t_0-2S,t_0)\subset\subset I$ with $S\in\left( 0,\min\left\lbrace1,\frac{t_0}{2} \right\rbrace \right) $, $\alpha\in \left( s_p,\min\left\lbrace 1,\frac{s_pp}{p-1}\right\rbrace \right) $ and $\gamma\in\left( 0,\Gamma\right)$, there exist positive constants $C$, $\kappa$, $\kappa'$, $\iota$ and $\iota'$ depending on $N, p, s_1, s_p, \alpha$ and $\gamma$ such that
	\begin{align*}
		&\quad\sup_{(x_1, \tau_1),(x_2, \tau_2)\in B_r(x_0)\times(t_0-S, t_0)}|u(x_1, \tau_1)-u(x_2, \tau_2)|\\
		&\le \frac{C\left[ \left( \mathcal{T}+1\right)^2+\left( \int_{t_0-2S}^{t_0}[u(\cdot,t)]^p_{W^{s_p, p}(B_R(x_0))}\,\mathrm{d}t\right) ^\frac{2}{p}\right] }{S^{\kappa'}(R-r)^{\kappa}}|x_1-x_2|^{\alpha}\\
		&\quad+\frac{C\left[ \left( \mathcal{T}+1\right)^{2p}+\left( \int_{t_0-2S}^{t_0}[u(\cdot,t)]^p_{W^{s_p,p}(B_R(x_0))}\,\mathrm{d}t\right) ^2\right] }{S^{\iota'}(R-r)^{\iota}}|\tau_1-\tau_2|^\gamma,
	\end{align*}
	where $\Gamma:=\begin{cases}
		\frac{1}{\max\left\lbrace s_1, s_pp\right\rbrace+1},\quad&\text{if }s_p\ge\frac{p-1}{p},\\[2mm]
		\frac{s_pp}{(p-1)\max\left\lbrace s_1,s_pp\right\rbrace+s_pp},\quad&\text{if }0<s_p<\frac{p-1}{p}.
	\end{cases}$
\end{theorem}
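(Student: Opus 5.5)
The plan follows the difference quotient scheme of Brasco--Lindgren--Str\"omqvist for the parabolic fractional $p$-Laplacian, adapted to the extra $1$-growth term. The proof has three stages: spatial Besov/Sobolev regularity, spatial H\"older continuity, and time regularity.

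\emph{Stage 1: spatial Besov regularity.} I would test \eqref{1.2} with
\begin{align*}
\varphi=\eta^p(x)\,\theta(t)\,\frac{J_{\beta+1}(\delta_h u)}{|h|^{1+\vartheta\beta}},
\end{align*}
where $\delta_h u:=u(x+h,t)-u(x,t)$. To make this rigorous I would use a Steklov average in time, which handles the time derivative and produces the term $\frac{d}{dt}\int|\delta_h u|^{\beta+2}\eta^p$.

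\begin{itemize}
\item The $p$-term is treated as in the fractional $p$-Laplacian case.
 \begin{itemize}
 \item The local part, over $B_R\times B_R$, gives a coercive quantity via the algebraic inequality for $J_p(a)-J_p(b)$.
 \item The nonlocal part, where one variable lies outside $B_R$, is bounded by $\mathcal T$ through the tail.
 \end{itemize}
\item The $1$-term is the new difficulty. We only know $Z\in\mathrm{sgn}(u(x)-u(y))$, so the shifted difference $Z(x+h,y+h)-Z(x,y)$ gives no coercivity. However, it is bounded by $2$.
 \begin{itemize}
 \item I would split the double integral into the near-diagonal region $|x-y|<|h|$ and the rest.
 \item On the near-diagonal region, $\int |x-y|^{-N-s_1}$ is not integrable. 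There I would instead use the monotonicity $(Z(x+h,y+h)-Z(x,y))(\delta_hu(x)-\delta_hu(y))\ge0$ in the appropriate sign, after rewriting $\varphi$ via a discrete integration by parts. This makes the singular local piece nonnegative up to cutoff errors.
 \item The remaining pieces are estimated crudely by $C|h|^{-s_1}$ times powers of $\|\delta_h u\|_{L^\infty}$, losing at most $|h|^{\max\{s_1,s_pp\}}$ against the $p$-term.
 \end{itemize}
\end{itemize}

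This yields an improvement of the form
\begin{align*}
\sup_h\Big\|\frac{\delta_h u}{|h|^{\vartheta}}\Big\|_{L^{\beta+p}} \;\Longrightarrow\; \sup_h\Big\|\frac{\delta^2_h u}{|h|^{\vartheta'}}\Big\|_{L^{\beta+p}},
\end{align*}
valid uniformly in time on a slightly smaller cylinder.

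\emph{Stage 2: spatial H\"older continuity.} I would iterate the improvement of Stage~1 on shrinking balls $B_{r_i}$ and cutoffs in time $(t_0-S-2^{-i}S,t_0)$. Starting from $u\in L^p(W^{s_p,p})$, the exponent $\beta$ increases and the differentiability $\vartheta$ approaches $\min\{1,\frac{s_pp}{p-1}\}$. The super- and sub-quadratic cases need different algebraic inequalities; for $1<p<2$ I would insert a $(|a|+|b|)^{2-p}$ weight and use H\"older. After finitely many steps, Morrey embedding for Besov spaces gives spatial $C^\alpha$ for every $\alpha<\min\{1,\frac{s_pp}{p-1}\}$. The constants accumulate the powers $(\mathcal T+1)^2$ and the integrated seminorm, together with negative powers of $S$ and $R-r$.

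\emph{Stage 3: time regularity.} I would test with a cutoff in space and use the equation to bound
\begin{align*}
|(u)_{B_\rho}(\tau_1)-(u)_{B_\rho}(\tau_2)| \le C|\tau_1-\tau_2|\,\rho^{-\max\{s_1,s_pp\}}\,\big(1+\rho^{(p-1)\alpha}\cdots\big),
\end{align*}
where the $1$-term contributes $\rho^{-s_1}$ and the $p$-term contributes $\rho^{\alpha(p-1)-s_pp}$ together with a tail term. Combining this with the spatial oscillation $\rho^\alpha$ and choosing $\rho=|\tau_1-\tau_2|^{1/(\alpha'+\max\{s_1,s_pp\})}$ with $\alpha'$ close to $1$ or to $\frac{s_pp}{p-1}$ produces exactly the exponent $\Gamma$ in both cases. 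The $L^\infty$-in-time tail assumption of Remark~\ref{rem28} is used here.

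The main obstacle is Stage~1 for the $1$-term. The lack of regularity of $Z$ means the near-diagonal singular integral must be absorbed by monotonicity rather than estimated. If that fails, I would first try regularizing $Z$ through an approximating problem in which $\mathrm{sgn}$ is replaced by $J_{1+\epsilon}$, and prove bounds uniform in $\epsilon$.
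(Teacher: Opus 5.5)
\textbf{Gap: the $1$-term in Stage 1.} The step that fails is how you handle the $1$-term. You use monotonicity only on the near-diagonal set $\{|x-y|<|h|\}$. Off the diagonal you bound crudely via $|Z_h-Z|\le 2$, which costs $\int_{\{|x-y|\ge|h|\}}|x-y|^{-N-s_1}\,\mathrm{d}y\sim |h|^{-s_1}$. This loss is fatal, not harmless.

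To see why, write the test function as $J_{q-p+2}(\tau_hu)\eta^p\xi$ (your $\beta=q-p+1$). The $p$-term controls $[J_{\frac{q}{p}+1}(\tau_hu)]^p_{W^{s_p,p}}$, so $\int|\tau_h(\tau_hu)|^q\le C|h|^{s_pp}\cdot(\text{right-hand side})$. Suppose the $1$-term contributes $|h|^{-s_1}\int|\tau_hu|^{q-p+1}\le C|h|^{\sigma(q-p+1)-s_1}$. Then the new differentiability exponent is $\frac{s_pp-s_1+\sigma(q-p+1)}{q}$, and this exceeds $\sigma$ only when $\sigma<\frac{s_pp-s_1}{p-1}$. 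Consequently:
\begin{itemize}
\item if $s_1\ge s_pp$, the iteration never starts;
\item otherwise it saturates strictly below $\frac{s_pp}{p-1}$.
\end{itemize}
Either way this contradicts the range of $\alpha$ and the value of $\Gamma$ in the theorem. Bounding by powers of $\|\tau_hu\|_{L^\infty}$, as you write, is worse still: without prior H\"older information that carries no power of $|h|$ at all. The quantity $\max\{s_1,s_pp\}$ legitimately enters only in the time estimate, never in the spatial iteration.

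\textbf{The missing observation and the fix.} The monotonicity is global. Since $\tau_hu(x)-\tau_hu(y)=(u_h(x)-u_h(y))-(u(x)-u(y))$, $Z_h\in\mathrm{sgn}(u_h(x)-u_h(y))$ and $Z\in\mathrm{sgn}(u(x)-u(y))$, one has
\[
(Z_h-Z)\bigl(J_{q-p+2}(\tau_hu)(x)-J_{q-p+2}(\tau_hu)(y)\bigr)\ge0\quad\text{for all } x,y .
\]
So no diagonal splitting is needed. The paper restricts to $B_R\times B_R$ and, with $J:=J_{q-p+2}$ and the time cutoff suppressed, writes
\[
\varphi(x)-\varphi(y)=\bigl[J(\tau_hu)(x)-J(\tau_hu)(y)\bigr]\frac{\eta^p(x)+\eta^p(y)}{2}+\bigl[\eta^p(x)-\eta^p(y)\bigr]\frac{J(\tau_hu)(x)+J(\tau_hu)(y)}{2}.
\]
The three resulting contributions are:
\begin{itemize}
\item the first piece is nonnegative on the whole local region and can be discarded;
\item the second piece is integrable, because $|\eta^p(x)-\eta^p(y)|\le C|x-y|/(R-r)$ and $s_1<1$, giving $\frac{C}{(1-s_1)(R-r)}\int|\tau_hu|^{q-p+1}$;
\item the part with $y\notin B_R$ gives $\frac{C}{s_1(R-r)^{s_1}}\int|\tau_hu|^{q-p+1}$.
\end{itemize}
None of these carries a negative power of $|h|$. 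This is exactly why the ceiling $\frac{s_pp}{p-1}$ is independent of $s_1$. Your fallback of regularizing $\mathrm{sgn}$ is unnecessary. It would also require an approximation or uniqueness argument for the given pair $(u,Z)$ that is not available.

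\textbf{Remaining remarks.} With this correction the rest of your plan is viable, with one point to sharpen. Uniform-in-time control comes from the $\partial_t$-term, and only for first differences, i.e. $\sup_t\int|\tau_hu|^q(\cdot,t)\,\mathrm{d}x$, as in Corollary~\ref{cor22}. It does not hold for second differences. So the fixed-time Morrey step must be run on that term.

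Your Stage 3 is a correct and more direct alternative to the paper's argument. You compare $\eta$-weighted averages at two times via the equation and then optimize $\rho$. The paper instead bounds the mean oscillation on $B_r\times(t_0-\theta,t_0]$ with $\theta=r^{\delta+\max\{s_1,s_pp\}}$ and invokes Da Prato's Campanato characterization. Both routes rest on the same bound $\theta\,(r^{-s_1}+r^{-s_pp})$ for the time variation of the averages.
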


\begin{remark}
	\label{rem29}
	When establishing the regularity of weak solutions with respect to the time variable, we first need to derive pointwise regularity in the spatial variables. However, part of this spatial regularity is ``consumed" by the operators $(-\Delta_1)^{s_1}$ and $(-\Delta_p)^{s_p}$, which accounts for the specific form of the exponent $\Gamma$ appearing in Theorem \ref{th27}. To visually illustrate the dependence of $\Gamma$ on the parameters $s_1$ and $s_p$, we plot the three-dimensional surfaces of $\Gamma(s_1,s_p)$ for the sub-quadratic case $\left( p=\frac{3}{2}\right) $ and the super-quadratic case $(p=3)$. By comparing these two surfaces, readers can clearly perceive how $\Gamma$ varies with the parameters, thereby gaining an intuitive understanding of the behavior of weak solutions under different nonlinear intensities.
	\begin{figure}[htbp]
		\centering
		\begin{tikzpicture}
			\begin{axis}[
				view={60}{30},
				width=4cm, height=4cm,
				xlabel={$s_1$},
				ylabel={$s_p$},
				xmin=0, xmax=1,
				ymin=0, ymax=1,
				zmin=0, zmax=1,
				xtick={0,1}, ytick={0,1}, ztick={0,0.5,1},
				domain=0:1, y domain=0:1,
				samples=25, samples y=25,
				colormap/viridis,
				opacity=0.6,
				scale only axis,
				at={(0cm,0cm)}
				]
				\def\p{1.5}
				\addplot3[
				surf, shader=interp, z buffer=sort
				](
				x,y,
				{
					y >= (\p-1)/\p ?
					1/(max(x,\p*y)+1)
					:
					(\p*y)/((\p-1)*max(x,\p*y)+\p*y)
				}
				);
			\end{axis}
			
			\begin{axis}[
				view={60}{30},
				width=4cm, height=4cm,
				xlabel={$s_1$},
				ylabel={$s_p$},
				xmin=0, xmax=1,
				ymin=0, ymax=1,
				zmin=0, zmax=1,
				xtick={0,1}, ytick={0,1}, ztick={0,0.5,1},
				domain=0:1, y domain=0:1,
				samples=25, samples y=25,
				colormap/viridis,
				opacity=0.6,
				scale only axis,
				at={(5cm,0cm)}
				]
				\def\p{3}
				\addplot3[
				surf, shader=interp, z buffer=sort
				](
				x,y,
				{
					y >= (\p-1)/\p ?
					1/(max(x,\p*y)+1)
					:
					(\p*y)/((\p-1)*max(x,\p*y)+\p*y)
				}
				);
			\end{axis}
			
		\end{tikzpicture}
		\caption{3D surfaces of $\Gamma(s_1, s_p)$ for $p=\frac{3}{2}$ (left) and $p=3$ (right).}
	\end{figure}
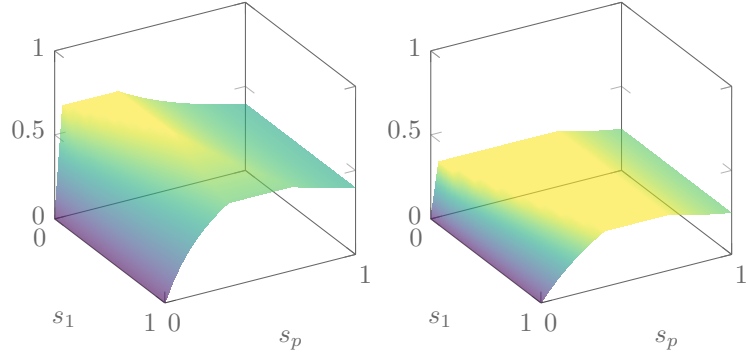
	
	In the sub-quadratic case, although the values of $\Gamma$ can ideally be larger than those in the super-quadratic case, $s_1$ is more likely to exceed $s_pp$ in this regime. As a result, an increase in $s_1$ has a more pronounced effect on $\Gamma$ and reduces its maximum value when $s_1>s_pp$. On the other hand, the H\"{o}lder exponent with respect to the spatial variables always satisfies $\alpha<\min\left\lbrace 1, \frac{s_pp}{p-1}\right\rbrace$,
	which reflects the influence of  $1$-growth term in equation \eqref{1.1}. In the absence of this $1$-structure, one could choose $\alpha\in(0, 1)$ in the super-quadratic case and $\alpha\in\left(0, \min\left\lbrace 1,\frac{s_pp}{p-2}\right\rbrace\right)$
	in the sub-quadratic case. Consequently, the admissible range of the H\"{o}lder exponent in time $\gamma$ would also be enlarged. Therefore, the presence of the $1$-growth term has a significant restrictive effect on the regularity of solutions, which distinguishes the $(1, p)$-growth problem from the classical $(p, q)$-growth framework.
\end{remark}

\subsection{Approach overview}

We now briefly outline the main ideas of the proof. In contrast to our previous works \cite{LZ25, LZ2501}, problem \eqref{1.1} requires additional analysis of the time derivative of weak solutions. To handle this term properly, the weak solution must be allowed to be part of the test function. To ensure the test function is differentiable with respect to time, we perform a mollification in the time direction. A crucial challenge is to maintain the stability of the weak formulation under this mollification procedure. To address this difficulty, we establish Lemma \ref{lem2}, which guarantees the desired convergence properties after the mollification. Based on this result, we derive inequality \eqref{3.0}. Although this identity is not exact, it is sufficient for our purposes. In particular, by employing a new mollification technique, we are able to simplify certain arguments used in \cite{BLS21}.

The next step is to derive a difference-quotient version of the Caccioppoli-type inequality. In the parabolic setting, an additional term arises in the estimates, which must be carefully controlled to provide sufficient information regarding the step size $|h|$. This information is critical for the subsequent iteration scheme. We adopt a similar strategy for both the super-quadratic and sub-quadratic cases: through an iterative procedure, we progressively improve the Sobolev regularity of weak solutions, gradually approaching the level of weak differentiability and integrability required for the analysis. Once the spatial Sobolev regularity is established, we derive the H\"{o}lder continuity with respect to the spatial variables. Building on this result, we further investigate the regularity of weak solutions in time, for which we assume that the tail energy of the weak solution is locally bounded with respect to the time variable.
 
As far as we know, under the assumptions considered in this paper, the obtained regularity results are essentially optimal with respect to the H\"{o}lder exponents.
The main obstacle to further improvements lies in the representation of the auxiliary function $Z$ through difference quotients of weak solutions. This representation is inherently imprecise, and even if it was exact, the possible discontinuous behavior of $Z$ would still prevent a significant improvement of the higher regularity of solutions. Finally, in order to clearly highlight the factors influencing the regularity of weak solutions, all the estimates in this paper are formulated within normalized parabolic cylinders.

\smallskip

The remainder of this paper is organized as follows. In Section \ref{sec2}, we introduce the notations and several auxiliary results that will be used throughout the paper. In Sections \ref{sec3} and \ref{sec4}, we establish Sobolev regularity of weak solutions with respect to the spatial variables for the super-quadratic case and the sub-quadratic case, respectively. In Section \ref{sec5}, we first derive H\"{o}lder estimates for weak solutions with respect to the spatial variables at a fixed time, and then use these estimates to obtain H\"{o}lder continuity in time, thereby completing the proof of the main theorem.

\section{Preliminaries}
\label{sec2}
In this section, we introduce the notations and collect several auxiliary results that will be used later. Most of these results are standard, while for the reader’s convenience we record them here. For any $z \in \mathbb{R}^N$, let $|z|$ denote its Euclidean norm. Throughout the paper, we denote by $C$  generic constants whose values may vary even within the same inequality. When necessary, we explicitly indicate their dependencies, for instance, if a constant depends on $N, p, s_1, s_p$, it will be written as $C(N, p, s_1, s_p)$.

For a function $w: \Omega\rightarrow \mathbb{R}$ and $\alpha\in (0, 1)$, we denote by $w\in C^{0, \alpha}_\mathrm{loc}(\Omega)$ that $w\in C^{0, \alpha}\left(B_R(x_0)\right)$ for any ball $B_R(x_0)\subset\subset \Omega$. The semi-norm of $C^{0, \alpha}$ is defined by
\begin{align*}
	[w]_{C^{0, \alpha}\left(B_R(x_0)\right)}:=\sup\limits_{x, y\in B_R(x_0),  x\neq y}\frac{|w(x)-w(y)|}{|x-y|^\alpha}.
\end{align*}

Now, we recall several function spaces that will be used in the sequel. We introduce the (fractional) Sobolev space $W^{1, q}(\Omega)$ and $W^{\gamma, q}(\Omega)$, where $q\ge1$ and $\gamma\in(0, 1)$. These spaces are respectively defined as

\begin{align*}
	\left\lbrace u\in L^q(\Omega)\bigg|\|u\|_{W^{1,q}(\Omega)}:=\|u\|_{L^q(\Omega)}+\|\nabla u\|_{L^q(\Omega)}<+\infty\right\rbrace 
\end{align*}
and
\begin{align*}
	\left\lbrace u\in L^q(\Omega)\bigg|[u]_{W^{\gamma,q}(\Omega)}:=\left( \int_{\Omega}\int_{\Omega}\frac{|u(x)-u(y)|^q}{|x-y|^{N+\gamma q}}\,\mathrm{d}x\mathrm{d}y\right)^\frac{1}{q} <+\infty\right\rbrace .
\end{align*}

We set the tail space associated with nonlocal operators. For $q,\gamma>0$, the tail space $L^q_\gamma(\mathbb{R}^N)$ contains all function $w\in L^q_\mathrm{loc}(\mathbb{R}^N)$ satisfying
\begin{align*}
	\int_{\mathbb{R}^N}\frac{|w|^q}{(1+|x|)^{N+\gamma}}\,\mathrm{d}x<+\infty
\end{align*}
and we define $\mathrm{Tail}(u;x_0,R,t)$ for a function $u(\cdot,t)$ belonging to the space $L^{p-1}_{s_pp}(\mathbb{R}^N)$ as
\begin{align*}
	\mathrm{Tail}(u;x_0,R,t):=\left( R^{s_pp}\int_{\mathbb{R}^N\backslash B_R(x_0)}\frac{|u(x,t)|^{p-1}}{|x-x_0|^{N+s_p p}}\,\mathrm{d}x\right)^\frac{1}{p-1}. 
\end{align*}

Under the assumption that $u$ is locally bounded, the following estimate for the nonlocal tail holds.

\begin{lemma}
	\label{lem8}
	Let $p>1$ and $s_p\in (0, 1)$. For any $u\in L^\infty\bigl(T_0,T_1; L^{p-1}_{s_pp}(\mathbb{R}^N)\bigl) $, and any ball $B_R\equiv B_R(x_0)\subset\subset\Omega$, $r\in (0,R)$, we have
	\begin{align*}
		\operatorname*{ess\,sup}_{t\in(T_0,T_1)}\mathrm{Tail}(u;x_0,r,t)\le C(N)\left( \frac{R}{r}\right)^N\left(\operatorname*{ess\,sup}_{t\in(T_0,T_1)} \mathrm{Tail}(u;x_0,R,t)+\|u\|_{L^\infty(B_R\times(T_0,T_1))}\right).
	\end{align*}
\end{lemma}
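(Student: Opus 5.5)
The plan is to fix $t\in(T_0,T_1)$ (outside a null set) and estimate $\mathrm{Tail}(u;x_0,r,t)^{p-1}$ pointwise in $t$. The bound then passes to the essential supremum, because the right-hand side is monotone in the corresponding $t$-quantities. Write
\begin{align*}
\mathrm{Tail}(u;x_0,r,t)^{p-1}=r^{s_pp}\int_{B_R\setminus B_r}\frac{|u(x,t)|^{p-1}}{|x-x_0|^{N+s_pp}}\,\mathrm{d}x+r^{s_pp}\int_{\mathbb{R}^N\setminus B_R}\frac{|u(x,t)|^{p-1}}{|x-x_0|^{N+s_pp}}\,\mathrm{d}x=:I_1+I_2 .
\end{align*}
The far part is immediate: since $r<R$,
\begin{align*}
I_2=\left(\frac{r}{R}\right)^{s_pp}\mathrm{Tail}(u;x_0,R,t)^{p-1}\le \mathrm{Tail}(u;x_0,R,t)^{p-1}.
\end{align*}

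For the annular part, bound $|u(x,t)|\le\|u\|_{L^\infty(B_R\times(T_0,T_1))}=:M$ for a.e.\ $x\in B_R$ and integrate in polar coordinates:
\begin{align*}
I_1\le M^{p-1}r^{s_pp}|\mathbb{S}^{N-1}|\int_r^R\rho^{-1-s_pp}\,\mathrm{d}\rho\le M^{p-1}\frac{|\mathbb{S}^{N-1}|}{s_pp}.
\end{align*}
This constant depends on $s_pp$. To obtain a constant depending only on $N$, as the statement claims, I would instead use the cruder bound $|x-x_0|\ge r$ on the annulus. That gives
\begin{align*}
I_1\le M^{p-1}r^{-N}|B_R|=C(N)M^{p-1}\left(\frac{R}{r}\right)^N,
\end{align*}
which is exactly the source of the factor $(R/r)^N$.

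It remains to combine the two pieces and take the $(p-1)$-th root:
\begin{align*}
\mathrm{Tail}(u;x_0,r,t)\le\Bigl(C(N)(R/r)^N\bigl(M^{p-1}+\mathrm{Tail}(u;x_0,R,t)^{p-1}\bigr)\Bigr)^{\frac{1}{p-1}}.
\end{align*}
The only mildly delicate step is this root extraction when $p<2$. There the exponent $1/(p-1)$ exceeds $1$, so both $(a+b)^{1/(p-1)}\le 2^{1/(p-1)}(a^{1/(p-1)}+b^{1/(p-1)})$ and the power of $C(N)(R/r)^N$ pick up $p$-dependent factors and powers larger than $N$. I would handle this by noting that $(R/r)^N\ge1$, so for $p\ge2$ the power $(R/r)^{N/(p-1)}$ is at most $(R/r)^N$. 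For $1<p<2$ I would either accept a constant $C(N,p)$ and a power $(R/r)^{N/(p-1)}$, or sharpen the annular estimate so the $r^{-N}$ loss does not get raised to the power $1/(p-1)$. Either way, the structure of the argument is the split $I_1+I_2$ with the $L^\infty$ bound on the near part and scaling on the far part, followed by taking the essential supremum in $t$.
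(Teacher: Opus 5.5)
Your split of $\mathrm{Tail}(u;x_0,r,t)^{p-1}$ into an annulus part $I_1$ over $B_R\setminus B_r$ and an exterior part $I_2$ over $\mathbb{R}^N\setminus B_R$ is the standard argument; the paper states this lemma without proof. For $p\ge 2$ your argument is complete. From $\mathrm{Tail}(u;x_0,r,t)^{p-1}\le \mathrm{Tail}(u;x_0,R,t)^{p-1}+|B_1|(R/r)^N M^{p-1}$, use subadditivity of $s\mapsto s^{1/(p-1)}$ and $|B_1|^{1/(p-1)}(R/r)^{N/(p-1)}\le\max\{1,|B_1|\}(R/r)^N$. This gives the claim with a constant depending only on $N$.

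For $1<p<2$ the proposal is not finished. Your first option yields the power $(R/r)^{N/(p-1)}$, which is weaker than claimed, and the second option is never carried out. It follows from your own polar-coordinate computation, kept in its sharper form. With $\lambda=R/r$, use $\ln\lambda\le\lambda^{\varepsilon}/(e\varepsilon)$ with $\varepsilon=N(p-1)$ and $|\mathbb{S}^{N-1}|=N|B_1|$:
\[
I_1\le |\mathbb{S}^{N-1}|M^{p-1}\int_1^{\lambda}\rho^{-1-s_pp}\,\mathrm{d}\rho\le |\mathbb{S}^{N-1}|M^{p-1}\ln\lambda\le\frac{|B_1|}{e(p-1)}\lambda^{N(p-1)}M^{p-1}.
\]
Then $(a+b)^{1/(p-1)}\le 2^{\frac{2-p}{p-1}}\bigl(a^{1/(p-1)}+b^{1/(p-1)}\bigr)$ gives
\[
\mathrm{Tail}(u;x_0,r,t)\le 2^{\frac{2-p}{p-1}}\Bigl(\mathrm{Tail}(u;x_0,R,t)+\bigl(\tfrac{|B_1|}{e(p-1)}\bigr)^{\frac{1}{p-1}}\lambda^{N}M\Bigr).
\]
This is the lemma with a constant $C(N,p)$. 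Alternatively, your bound $I_1\le|\mathbb{S}^{N-1}|M^{p-1}/(s_pp)$ gives $C(N,p,s_p)$ with no factor $(R/r)^N$ at all.

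Do not look for a constant depending on $N$ alone that works for all $p\in(1,2)$: none exists. Take $u=\chi_{B_R(x_0)}$. Then $\mathrm{Tail}(u;x_0,R,t)=0$, $M=1$, and
\[
\mathrm{Tail}(u;x_0,r,t)^{p-1}=|\mathbb{S}^{N-1}|\frac{1-\lambda^{-s_pp}}{s_pp}\ge(1-e^{-1})|\mathbb{S}^{N-1}|\ln\lambda\quad\text{whenever } s_pp\ln\lambda\le1.
\]
Fix $\lambda$ so that the right-hand side is at least $2$, and fix $s_p\le(2\ln\lambda)^{-1}$. Then $\mathrm{Tail}(u;x_0,r,t)\ge 2^{1/(p-1)}\to\infty$ as $p\downarrow1$, while $C(N)\lambda^N$ stays fixed.

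So for $1<p<2$ the statement is only correct with $C(N,p)$. This is harmless for the paper, since its later applications (e.g.\ in Lemmas~\ref{lem6} and~\ref{lem13}) already allow $p$-dependent constants.
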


We introduce the difference quotient operator, which will play a crucial role in our regularity analysis. For a direction vector $h\in \mathbb R^N$ and measurable $u: \Omega\to \mathbb R$, define the difference operator 
\begin{align*}
	\tau_hu(x):=u(x+h)-u(x), \quad x\in\mathbb{R}^N,
\end{align*}
and for $\gamma>0$ and $a\in\mathbb{R}$, we define
\begin{align*}
	J_\gamma(a):=|a|^{\gamma-2}a.
\end{align*}

For completeness, we collect several auxiliary lemmas that will be used in the subsequent analysis. Most of them can be found in \cite[Section 2]{LZ25,LZ2501}. These lemmas include two classical algebraic inequalities, the embedding theorem for Sobolev spaces, and results revealing the relationship between the difference quotients of a function and its Sobolev regularity.

\begin{lemma}
	\label{lem9}
	For any $\gamma>0$ and any $a,b\in \mathbb{R}$, we have
	\begin{align*}
		C_1\left( |a|+|b|\right) ^{\gamma-1}|b-a|\le \left| J_{\gamma+1}(b)-J_{\gamma+1}(a)\right| \le C_2\left( |a|+|b|\right) ^{\gamma-1}|b-a|
	\end{align*}
	where $C_1=\min\left\lbrace \gamma,2^{1-\gamma}\right\rbrace $, $C_2=\max\left\lbrace \gamma,2^{1-\gamma}\right\rbrace $.
\end{lemma}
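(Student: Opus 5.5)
The plan is to reduce the inequality to a one-variable problem using homogeneity and symmetry, and then treat separately the two sign configurations of $a$ and $b$.

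\textbf{Reduction.} If $a=b$ there is nothing to prove. Both sides are positively homogeneous of degree $\gamma$ in $(a,b)$. Both sides are also invariant under $(a,b)\mapsto(b,a)$ and $(a,b)\mapsto(-a,-b)$, because $J_{\gamma+1}$ is odd. After these normalizations we may assume $|a|\le |b|=1$ and $b=1$. Then $a=x\in[-1,1)$, and the claim becomes
\begin{align*}
	C_1\le F(x):=\frac{1-J_{\gamma+1}(x)}{(1+|x|)^{\gamma-1}(1-x)}\le C_2 .
\end{align*}

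\textbf{Same signs ($x\in[0,1)$).} Here $F(x)=\frac{1-x^\gamma}{(1+x)^{\gamma-1}(1-x)}$. I would use the representation
\begin{align*}
	1-x^\gamma=\gamma\int_0^1 \bigl(x+t(1-x)\bigr)^{\gamma-1}\,\mathrm{d}t\,(1-x).
\end{align*}
The base $x+t(1-x)$ lies between $x$ and $1$, hence between $\frac{1+x}{2}$-comparable quantities. For $\gamma\ge1$ the integrand is bounded above by $1\le(1+x)^{\gamma-1}$, which gives $F\le\gamma$. For $\gamma<1$ the integrand is bounded below by $1\ge 2^{\gamma-1}(1+x)^{\gamma-1}$, which gives $F\ge\gamma 2^{\gamma-1}$. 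To obtain the sharp constants I would instead check the endpoint values $F(0)=1$ and $F(1^-)=\gamma 2^{1-\gamma}$. I would then show that $F$ is monotone on $[0,1)$ by a direct derivative computation, or equivalently by comparing $1-x^\gamma$ with the chord. This places $F$ between $\min\{1,\gamma2^{1-\gamma}\}$ and $\max\{1,\gamma 2^{1-\gamma}\}$. These values lie between $\min\{\gamma,2^{1-\gamma}\}$ and $\max\{\gamma,2^{1-\gamma}\}$, because $1$ and $\gamma2^{1-\gamma}$ are sandwiched between $\gamma$ and $2^{1-\gamma}$: for $\gamma\ge1$ one has $2^{1-\gamma}\le1\le\gamma$ and $2^{1-\gamma}\le \gamma2^{1-\gamma}\le\gamma$, and symmetrically for $\gamma<1$.

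\textbf{Opposite signs ($x=-y$, $y\in(0,1]$).} Here $F=\frac{1+y^\gamma}{(1+y)^{\gamma}}$. By the power-mean (Jensen) inequality for $t\mapsto t^\gamma$, this quantity lies between $1$ and $2^{1-\gamma}$: it is in $[2^{1-\gamma},1]$ if $\gamma\ge1$ and in $[1,2^{1-\gamma}]$ if $\gamma<1$. In both cases it lies within $[C_1,C_2]$.

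\textbf{Main obstacle.} The only delicate point is the monotonicity of $F$ on $[0,1)$ in the same-sign case, which is what yields the sharp constants. If monotonicity is awkward to verify, I would fall back on the integral representation above together with the elementary bound $\min\{1,t\}\cdot\text{const}\le(\cdot)$. That route gives the same inequality with possibly different constants depending only on $\gamma$, and this weaker form suffices for the later applications.
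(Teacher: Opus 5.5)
The paper gives no proof of this lemma; it is quoted as a standard algebraic inequality from \cite{LZ25,LZ2501}. So the only question is whether your argument closes. Your reduction to $b=1$, $a=x\in[-1,1)$ is correct. So is the opposite-sign case, with one small correction: the bound $1+y^\gamma\le(1+y)^\gamma$ for $\gamma\ge1$, and its reverse for $\gamma<1$, is super/subadditivity of $t\mapsto t^\gamma$ rather than Jensen. It is equally elementary.

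\textbf{The unproved step.} In the same-sign case you assert that $F$ is monotone on $[0,1)$ without verifying it. Your fallback (``possibly different constants'') would not prove the lemma with the stated $C_1,C_2$.

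\textbf{Monotonicity is true.} Multiply $(\log F)'$ by $(1-x^2)(1-x^\gamma)>0$. This gives
\[
N(x)=(2-\gamma)(1-x^\gamma)+\gamma\bigl(x-x^{\gamma-1}\bigr),\qquad N(1)=N'(1)=0,\qquad N''(x)=\gamma(\gamma-1)(\gamma-2)\,x^{\gamma-3}(x-1).
\]
Hence $N(x)=\int_x^1(t-x)N''(t)\,\mathrm{d}t$ has the fixed sign of $-\gamma(\gamma-1)(\gamma-2)$ on $(0,1)$. So $F$ moves monotonically from $F(0)=1$ to $F(1^-)=\gamma2^{1-\gamma}$.

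\textbf{Monotonicity is also unnecessary.} The stated constants are not sharp on both sides. For $\gamma>1$ the best upper constant is $\max\{1,\gamma2^{1-\gamma}\}<\gamma$. For $\gamma<1$ the best lower constant is $\gamma2^{1-\gamma}>\gamma$. The stated constants follow directly from your integral representation once you also compare $x^\gamma$ with $x$. Write
\[
F=\frac{1-x^\gamma}{1-x}\,(1+x)^{1-\gamma}.
\]
\begin{itemize}
\item For $\gamma\ge1$: the integral gives $\frac{1-x^\gamma}{1-x}\le\gamma$; the inequality $x^\gamma\le x$ gives $\frac{1-x^\gamma}{1-x}\ge1$; and $2^{1-\gamma}\le(1+x)^{1-\gamma}\le1$. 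Hence $2^{1-\gamma}\le F\le\gamma$.
\item For $\gamma<1$: the integral gives $\frac{1-x^\gamma}{1-x}\ge\gamma$; the inequality $x^\gamma\ge x$ gives $\frac{1-x^\gamma}{1-x}\le1$; and $1\le(1+x)^{1-\gamma}\le2^{1-\gamma}$. Hence $\gamma\le F\le2^{1-\gamma}$.
\end{itemize}

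Your bound $F\ge\gamma2^{\gamma-1}$ for $\gamma<1$ needlessly discards the factor $(1+x)^{1-\gamma}\ge1$. Keeping it gives $F\ge\gamma=C_1$ at once. With this replacement the same-sign case yields exactly $C_1$ and $C_2$, and the proof is complete.
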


\begin{lemma}
	\label{lem16}
	Let $p\in(1,2)$ and $\delta\ge1$. For any $a, b, c, d\in\mathbb{R}$ and any $e, f\in \mathbb{R}^+$, we have
	\begin{align*}
		&\quad\left( J_p(a-b)-J_p(c-d)\right)\left( J_{\delta+1}(a-c)e^p-J_{\delta+1}(b-d)f^p\right)\\
		&\ge \frac{p-1}{2^{\delta+1}}\left( |a-b|+|c-d|\right)^{p-2}\left( |a-c|+|b-d|\right)^{\delta-1}\left| (a-c)-(b-d)\right|^2\left( e^p+f^p\right)\\
		&\quad -\left( \frac{2^{\delta+1}}{p-1}\right)^{p-1}\left( |a-c|+|b-d|\right)^{p+\delta-1}|e-f|^p . 
	\end{align*}
\end{lemma}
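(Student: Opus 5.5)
The plan is to reduce the four-point inequality to the two-point bounds of Lemma \ref{lem9} by a symmetric/antisymmetric splitting of the weights, and then absorb the cross term with Young's inequality. I would set
\begin{align*}
A:=a-b,\quad B:=c-d,\quad X:=a-c,\quad Y:=b-d,
\end{align*}
and observe the identity $A-B=X-Y$. I would also write
\begin{align*}
J_{\delta+1}(X)e^p-J_{\delta+1}(Y)f^p=\tfrac{e^p+f^p}{2}\bigl(J_{\delta+1}(X)-J_{\delta+1}(Y)\bigr)+\tfrac{e^p-f^p}{2}\bigl(J_{\delta+1}(X)+J_{\delta+1}(Y)\bigr).
\end{align*}
The left-hand side of the lemma is then $\mathrm{I}+\mathrm{II}$, one term for each summand.

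\emph{Good term.} Since $t\mapsto J_\gamma(t)$ is increasing, $J_p(A)-J_p(B)$ and $J_{\delta+1}(X)-J_{\delta+1}(Y)$ both have the sign of $A-B=X-Y$, so $\mathrm{I}\ge0$. By Lemma \ref{lem9}, applied with $\gamma=p-1$ and with $\gamma=\delta$, the two factors are bounded below by
\begin{align*}
(p-1)(|A|+|B|)^{p-2}|X-Y| \quad\text{and}\quad 2^{1-\delta}(|X|+|Y|)^{\delta-1}|X-Y|.
\end{align*}
Here I used $\min\{p-1,2^{2-p}\}=p-1$ for $p\in(1,2)$ and $\min\{\delta,2^{1-\delta}\}=2^{1-\delta}$ for $\delta\ge1$. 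Setting $Q:=(|A|+|B|)^{p-2}|X-Y|^2$, this gives
\begin{align*}
\mathrm{I}\ge \frac{p-1}{2^{\delta}}\,Q\,(|X|+|Y|)^{\delta-1}(e^p+f^p),
\end{align*}
which is twice the good term in the statement. Half of it is therefore available for absorption.

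\emph{Cross term.} I would bound $\mathrm{II}$ from below by $-|\mathrm{II}|$, estimating each factor separately:
\begin{align*}
|J_p(A)-J_p(B)|&\le 2^{2-p}(|A|+|B|)^{p-2}|A-B| \quad\text{(upper bound in Lemma \ref{lem9})},\\
|J_{\delta+1}(X)+J_{\delta+1}(Y)|&\le (|X|+|Y|)^{\delta},\\
|e^p-f^p|&\le p\,(e^{p-1}+f^{p-1})|e-f|.
\end{align*}
The key step, and the one I expect to be the only real obstacle, is converting $(|A|+|B|)^{p-2}|X-Y|$ into a power of $Q$. The negative exponent $p-2$ rules out a naive Young splitting, and using instead $|J_p(A)-J_p(B)|\lesssim |X-Y|^{p-1}$ gives a quantity that dominates $Q$, so it cannot be absorbed. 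The fix is
\begin{align*}
(|A|+|B|)^{p-2}|X-Y| = Q^{\frac{p-1}{p}}\Bigl(\frac{|X-Y|}{|A|+|B|}\Bigr)^{\frac{2-p}{p}}\le Q^{\frac{p-1}{p}},
\end{align*}
which holds because $|X-Y|=|A-B|\le|A|+|B|$. The degenerate case $A=B=0$ is trivial, since then $\mathrm{I}=\mathrm{II}=0$ and the right-hand side is at most zero.

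With this, $|\mathrm{II}|\lesssim Q^{(p-1)/p}(e^{p-1}+f^{p-1})|e-f|(|X|+|Y|)^{\delta}$. I would apply Young's inequality with exponents $\frac{p}{p-1}$ and $p$, grouping the factors as
\begin{align*}
\Bigl[Q\,(|X|+|Y|)^{\delta-1}(e^p+f^p)\Bigr]^{\frac{p-1}{p}}\times\Bigl[(|X|+|Y|)^{\delta-\frac{(\delta-1)(p-1)}{p}}|e-f|\Bigr].
\end{align*}
Here I use $(e^{p-1}+f^{p-1})\lesssim(e^p+f^p)^{(p-1)/p}$. The $p$-th power of the second bracket has exponent $\delta p-(\delta-1)(p-1)=p+\delta-1$ on $|X|+|Y|$, which is exactly the power in the error term of the statement.

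I would choose the Young parameter $\varepsilon$ so that the first contribution equals half of the lower bound for $\mathrm{I}$. Tracking the constants from Lemma \ref{lem9} and Young should then produce the stated coefficients $\frac{p-1}{2^{\delta+1}}$ and $\bigl(\frac{2^{\delta+1}}{p-1}\bigr)^{p-1}$. This last bookkeeping is routine, though it may require slightly sharper elementary bounds in the cross term to land exactly on those constants.
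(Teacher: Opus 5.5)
The paper does not prove this lemma; it appears among the auxiliary results quoted from earlier work, so there is no in-paper argument to compare against. Your splitting argument is correct. The constant bookkeeping you left open does close, but only with the sharper elementary bound you anticipated, so here it is. Put
\begin{align*}
G:=(|A|+|B|)^{p-2}|X-Y|^2(|X|+|Y|)^{\delta-1}(e^p+f^p),\qquad H:=(|X|+|Y|)^{p+\delta-1}|e-f|^p,
\end{align*}
so that your lower bound reads $\mathrm{I}\ge (p-1)2^{-\delta}G$.

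In $\mathrm{II}$, use
\begin{align*}
|e^p-f^p|\le p\max\{e,f\}^{p-1}|e-f|\le p\,(e^p+f^p)^{\frac{p-1}{p}}|e-f|.
\end{align*}
Combine this with four other ingredients: the factor $\frac12$ from the splitting, the constant $2^{2-p}$ from Lemma~\ref{lem9}, the bound $|J_{\delta+1}(X)+J_{\delta+1}(Y)|\le(|X|+|Y|)^\delta$, and your key inequality $(|A|+|B|)^{p-2}|X-Y|\le Q^{\frac{p-1}{p}}$. Together they give $|\mathrm{II}|\le p\,2^{1-p}G^{\frac{p-1}{p}}H^{\frac1p}$.

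Now apply Young's inequality with its exact weights, $G^{\frac{p-1}{p}}H^{\frac1p}\le \frac{p-1}{p}\varepsilon G+\frac1p\varepsilon^{1-p}H$, and choose $\varepsilon=2^{p-\delta-2}$. This yields
\begin{align*}
|\mathrm{II}|\le \frac{p-1}{2^{\delta+1}}\,G+2^{(p-1)(\delta+1-p)}H\le \frac{p-1}{2^{\delta+1}}\,G+\Bigl(\frac{2^{\delta+1}}{p-1}\Bigr)^{p-1}H,
\end{align*}
where the last step uses $2^{(p-1)(\delta+1-p)}\le 2^{(p-1)(\delta+1)}$ and $(p-1)^{1-p}\ge 1$. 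Hence $\mathrm{I}+\mathrm{II}\ge\mathrm{I}-|\mathrm{II}|$ is at least the right-hand side of the lemma. As you noted, the first term on the right is read as $0$ when $a-b=c-d=0$.

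The refinement is genuinely needed, not cosmetic. Suppose you keep $p(e^{p-1}+f^{p-1})|e-f|$ and then use the best possible bound $e^{p-1}+f^{p-1}\le 2^{\frac1p}(e^p+f^p)^{\frac{p-1}{p}}$. The same optimization then puts $2^{1+(p-1)(\delta+1-p)}$ in front of $H$. This exceeds $\bigl(\frac{2^{\delta+1}}{p-1}\bigr)^{p-1}$ for $p$ close to $1$; for instance, at $p=1.3$ the ratio is about $1.06$, independently of $\delta$.

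No choice of $\varepsilon$ repairs this within that chain of estimates. Once the coefficient of $G$ is fixed at $\frac{p-1}{2^{\delta+1}}$, weighted Young gives the optimal coefficient of $H$ for any bound of the form $K\,G^{\frac{p-1}{p}}H^{\frac1p}$. So you must keep $\max\{e,f\}^{p-1}$ from the mean value theorem rather than passing through $e^{p-1}+f^{p-1}$.
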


\begin{lemma}[\text{Embedding $W^{1,q}\hookrightarrow W^{\gamma,q}$}]
	\label{lem14.1}
	Let $q\ge1$ and $\gamma\in(0,1)$. Then, for any $w\in L^q\left( T_0,T_1;W^{1,q}(B_R)\right) $, we have
	\begin{align*}
		\int_{T_0}^{T_1}\int_{B_R}\int_{B_R}\frac{|w(x)-w(y)|^q}{|x-y|^{N+\gamma q}}\,\mathrm{d}x\mathrm{d}y\mathrm{d}t\le C(N)\frac{R^{(1-\gamma)q}}{(1-\gamma)q}\int_{T_0}^{T_1}\int_{B_R}|\nabla w|^q\,\mathrm{d}x\mathrm{d}t.
	\end{align*}
\end{lemma}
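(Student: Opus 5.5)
The statement to prove is Lemma \ref{lem14.1}, the embedding of $W^{1,q}$ into $W^{\gamma,q}$ on a ball with an explicit constant. Since the time variable is only a parameter, it suffices to prove, for a.e. fixed $t$ and $w=w(\cdot,t)\in W^{1,q}(B_R)$,
\begin{align*}
\int_{B_R}\int_{B_R}\frac{|w(x)-w(y)|^q}{|x-y|^{N+\gamma q}}\,\mathrm{d}x\mathrm{d}y\le C(N)\frac{R^{(1-\gamma)q}}{(1-\gamma)q}\int_{B_R}|\nabla w|^q\,\mathrm{d}x,
\end{align*}
and then integrate over $(T_0,T_1)$. By density of smooth functions in $W^{1,q}(B_R)$ (balls are Lipschitz domains), together with Fatou's lemma on the left-hand side, we may assume $w\in C^1(\overline{B_R})$.

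Write $y=x+h$ with $|h|<2R$. Since $B_R$ is convex, the segment from $x$ to $x+h$ stays in $B_R$, so
\begin{align*}
|w(x+h)-w(x)|\le |h|\int_0^1|\nabla w(x+\theta h)|\,\mathrm{d}\theta .
\end{align*}
Jensen's inequality then gives
\begin{align*}
|w(x+h)-w(x)|^q\le |h|^q\int_0^1|\nabla w(x+\theta h)|^q\,\mathrm{d}\theta .
\end{align*}
Extend $|\nabla w|$ by zero outside $B_R$, and keep the indicator $\chi_{B_R}(x)\chi_{B_R}(x+h)$ in front. Integrating in $x$ and using Fubini together with translation invariance, for each fixed $\theta$ and $h$ we get $\int_{B_R}|\nabla w(x+\theta h)|^q\chi\,\mathrm{d}x\le \int_{B_R}|\nabla w|^q\,\mathrm{d}z$. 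Hence the double integral is at most
\begin{align*}
\|\nabla w\|^q_{L^q(B_R)}\int_{|h|<2R}|h|^{q-N-\gamma q}\,\mathrm{d}h .
\end{align*}

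It remains to compute the kernel integral in polar coordinates:
\begin{align*}
\int_{|h|<2R}|h|^{q-N-\gamma q}\,\mathrm{d}h=|\mathbb{S}^{N-1}|\int_0^{2R}\rho^{(1-\gamma)q-1}\,\mathrm{d}\rho=|\mathbb{S}^{N-1}|\frac{(2R)^{(1-\gamma)q}}{(1-\gamma)q}.
\end{align*}
Since $2^{(1-\gamma)q}\le 2^q$, this factor would carry a $q$-dependence that is not allowed in $C(N)$. It is harmless, though, once we use the better pointwise bound that appears in the standard proof: splitting the segment average at the midpoint, or averaging over intermediate points, recovers a constant of the form $C(N)^q$ times an $R^{(1-\gamma)q}$ factor. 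Presumably the paper's $C(N)$ also tacitly allows $q$-dependence, as in \cite{LZ25}.

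So the only delicate point is bookkeeping the constant. The rest consists of the convexity of the ball (so the segment stays inside, with no extension operator needed), Fubini, and the elementary radial integral that produces exactly the factor $\frac{1}{(1-\gamma)q}$. I would present the argument with $C=C(N,q)$ and remark that it agrees with the stated form.
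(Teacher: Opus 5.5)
Your argument is correct, and it is the standard one. The paper gives no proof of Lemma \ref{lem14.1}; it only refers to \cite{LZ25,LZ2501}. The usual proof is exactly your chain: the fundamental theorem of calculus along segments (using convexity of $B_R$), then Jensen's inequality, Fubini with a translation, and the radial integral of $|h|^{(1-\gamma)q-N}$ over $\{|h|<2R\}$. What you actually prove is the explicit bound
\begin{align*}
\int_{B_R}\int_{B_R}\frac{|w(x)-w(y)|^q}{|x-y|^{N+\gamma q}}\,\mathrm{d}x\mathrm{d}y\le |\mathbb{S}^{N-1}|\,\frac{(2R)^{(1-\gamma)q}}{(1-\gamma)q}\int_{B_R}|\nabla w|^q\,\mathrm{d}x,
\end{align*}
and integrating in $t$ finishes the proof.

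What you should fix is your discussion of the constant. The factor $2^{(1-\gamma)q}$ is not an artifact that splitting the segment at its midpoint, or averaging over intermediate points, could remove. A constant ``of the form $C(N)^q$'' is still $q$-dependent anyway, so that remark neither helps nor is needed. In fact the lemma is false with a constant depending on $N$ alone. Take $N=1$, $w(x)=x$ on $B_R=(-R,R)$, and $s:=(1-\gamma)q$. Then
\begin{align*}
\int_{-R}^{R}\int_{-R}^{R}|x-y|^{s-1}\,\mathrm{d}x\mathrm{d}y=\frac{2(2R)^{s+1}}{s(s+1)}=\frac{2^{s+1}}{s+1}\cdot\frac{R^{s}}{s}\int_{-R}^{R}|w'|^q\,\mathrm{d}x,
\end{align*}
and $\frac{2^{s+1}}{s+1}\to\infty$ as $q\to\infty$ with $\gamma$ fixed.

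So the lemma must be read either with $C=C(N,q)$, or with $(2R)^{(1-\gamma)q}$ in place of $R^{(1-\gamma)q}$, and your proof gives exactly that. This is harmless for the paper: the lemma is only used, in Theorem \ref{th14} and Corollary \ref{cor22}, for fixed $q$ inside constants that already depend on $q$. Drop the midpoint remark and the claim that $C(N,q)$ ``agrees with the stated form''. Instead, say explicitly that the $q$-dependence of the constant is unavoidable.
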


\begin{lemma}[\text{Embedding $W^{\gamma,q}\hookrightarrow C^{0,\gamma-\frac{N}{q}}$}]
	\label{lem23}
	Let $q\ge1$ and $\gamma\in (0,1)$ such that $\gamma q >N$. Then, there exists a constant $C=C(N, q, \gamma)$ such that for any $w\in W^{\gamma,q}(B_R)$, we have
	\begin{align*}
		[w]_{C^{0,\gamma-\frac{N}{q}}(B_R)}\le C[w]_{W^{\gamma, q}(B_R)}.
	\end{align*}
\end{lemma}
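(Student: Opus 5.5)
The plan is to prove the estimate by a Campanato-type argument: compare $w$ at a Lebesgue point with its averages over small balls, and use a telescoping sum. Since the hypothesis $\gamma q>N$ gives the exponent $\beta:=\gamma-\frac{N}{q}>0$, the resulting geometric series converges. Everything is stated on $B_R$, so I will work with the sets $D_\rho(z):=B_\rho(z)\cap B_R$ for $z\in B_R$ and $0<\rho\le 2R$. The first step is the elementary measure-density fact for balls,
\begin{align*}
c(N)\rho^N\le |D_\rho(z)|\le |B_1|\rho^N,\qquad z\in B_R,\ 0<\rho\le 2R.
\end{align*}
It holds because $D_\rho(z)$ contains a ball of radius $\rho/4$: take one centred on the segment from $z$ toward $x_0$. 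Both constants are independent of $R$, so the final constant will be scale invariant.

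The key estimate concerns averages. For two sets $A\subset A'$ of comparable measure, both contained in a ball of radius $\rho$, write $(w)_A$ for the mean of $w$ over $A$. By Jensen's inequality and $|a-b|\le 2\rho$ for $a,b\in A'$,
\begin{align*}
|(w)_A-(w)_{A'}|
&\le \left(\frac{1}{|A|\,|A'|}\int_{A}\int_{A'}|w(a)-w(b)|^q\,\mathrm{d}a\mathrm{d}b\right)^{\frac1q}\\
&\le \left(\frac{(2\rho)^{N+\gamma q}}{|A|\,|A'|}\right)^{\frac1q}[w]_{W^{\gamma,q}(B_R)}
\le C(N,q)\,\rho^{\beta}\,[w]_{W^{\gamma,q}(B_R)}.
\end{align*}

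Next, fix Lebesgue points $x,y\in B_R$ and set $\rho:=|x-y|\le 2R$ and $\rho_k:=2^{-k}\rho$. Applying the average estimate to $A=D_{\rho_{k+1}}(x)$ and $A'=D_{\rho_k}(x)$, whose measures are comparable by the first step, and summing over $k\ge0$ gives
\begin{align*}
|w(x)-(w)_{D_\rho(x)}|\le \sum_{k\ge0}|(w)_{D_{\rho_{k+1}}(x)}-(w)_{D_{\rho_k}(x)}|\le C\,[w]_{W^{\gamma,q}(B_R)}\sum_{k\ge 0}\rho_k^{\beta}=\frac{C}{1-2^{-\beta}}\,\rho^\beta\,[w]_{W^{\gamma,q}(B_R)}.
\end{align*}
The same bound holds at $y$. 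To link the two points, note that $D_\rho(x)$ and $D_\rho(y)$ are both contained in $D_{2\rho}(x)$ with comparable measure (when $2\rho>2R$ this set is simply $B_R$, and the density bound still applies up to a constant). Applying the average estimate to each against $D_{2\rho}(x)$ and using the triangle inequality gives
\begin{align*}
|w(x)-w(y)|\le C(N,q,\gamma)\,|x-y|^{\beta}\,[w]_{W^{\gamma,q}(B_R)}
\end{align*}
for almost every $x,y\in B_R$. Passing to the continuous representative then yields $[w]_{C^{0,\gamma-\frac{N}{q}}(B_R)}\le C[w]_{W^{\gamma,q}(B_R)}$.

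The only delicate point is bookkeeping near $\partial B_R$. One must check that the intersected balls keep measure comparable to $\rho^N$ uniformly in $z$ and $R$, which is exactly what the first step provides. One must also track the constant: it blows up as $\beta\to0$ through the factor $(1-2^{-\beta})^{-1}$, which is why $C$ depends on $\gamma$ and $q$ and not only on $N$.
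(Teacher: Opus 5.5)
The paper gives no proof of this lemma: it is listed in Section~2 among the standard auxiliary results and deferred to the authors' earlier works, where it appears as the usual fractional Morrey embedding. Your proposal is correct, and it is essentially the standard Campanato-type proof of that embedding.

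\begin{itemize}
\item Your average estimate $|(w)_A-(w)_{A'}|\le C\rho^{\gamma-N/q}[w]_{W^{\gamma,q}(B_R)}$ is the fractional Poincar\'e bound. It places $w$ in the Campanato space $\mathcal{L}^{q,\gamma q}(B_R)$.
\item The dyadic telescoping, together with the linking step through $B_{2\rho}(x)\cap B_R$, is an inline proof of Campanato's theorem on a domain with the measure-density property.
\item You supply that property correctly: for $\rho\le 2R$, the ball of radius $\rho/4$ centred on the segment toward $x_0$ lies in $B_\rho(z)\cap B_R$. The choice of centre at distance $\min\{|z-x_0|,\rho/2\}$ from $z$ works.
\end{itemize}

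Compared with simply quoting the embedding, your self-contained version has three advantages. It uses only the seminorm. The scale-invariant bound $|B_\rho(z)\cap B_R|\ge |B_1|(\rho/4)^N$ makes it evident that $C$ does not depend on $R$. It also shows the constant degenerating like $(1-2^{-(\gamma-N/q)})^{-1}$ as $\gamma q\downarrow N$.

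One small precision. In the average estimate, what you actually need is the lower bound $|A|\ge c\rho^N$, not merely that $|A|$ and $|A'|$ are comparable. Since $A\subset A'$, this bound also gives $|A'|\ge c\rho^N$. It holds in every application you make, including the case $2\rho\ge 2R$, where $D_{2\rho}(x)=B_R$. So nothing in the argument breaks.
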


\begin{lemma}
	\label{lem11}
	Let $q\ge1$, $\gamma\in (0,1]$, $M\ge1$, $0<d<R$ and $T_0<T_1<T_0+1$. Then, there exists a constant $C=C(N,q)$ such that whenever $w\in L^q\left(B_{R+d}\times(T_0,T_1)\right)$ satisfies
	\begin{align*}
		\int_{T_0}^{T_1}\int_{B_R}|\tau_hw|^q\,\mathrm{d}x\mathrm{d}t\le M^q|h|^{\gamma q}
	\end{align*}
	for any $h\in B_d\backslash\left\lbrace 0\right\rbrace $, $w\in W^{\beta,q}(B_R)$ for any $\beta\in (0,\gamma)$. Moreover, we have
	\begin{align*}
		\int_{T_0}^{T_1}\int_{B_R}\int_{B_R}\frac{|w(x,t)-w(y,t)|^q}{|x-y|^{N+\beta q}}\,\mathrm{d}x\mathrm{d}y\mathrm{d}t\le C\left[ \frac{d^{(\gamma-\beta)q}}{\gamma-\beta}M^q+\frac{1}{\beta d^{\beta q}}\|w\|^q_{L^q\left(B_R\times(T_0,T_1)\right) }\right]. 
	\end{align*}
\end{lemma}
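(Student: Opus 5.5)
The statement to prove is Lemma \ref{lem11}. It converts an integrated bound on difference quotients into a fractional Sobolev estimate. I would follow the classical argument of Brasco--Lindgren, applied for a.e. $t$ and then integrated in time. Since everything is linear in the time integral, I may work with $\int_{T_0}^{T_1}$ throughout. For fixed $t$, the double integral over $B_R\times B_R$ is split according to whether $|x-y|<d$ or $|x-y|\ge d$.

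\emph{Short-range part.} For the region $|x-y|<d$, substitute $y=x+h$ with $h\in B_d$. I would use Tonelli to bound this part by
\begin{align*}
\int_{B_d}\frac{1}{|h|^{N+\beta q}}\int_{T_0}^{T_1}\int_{B_R}|\tau_h w|^q\,\mathrm{d}x\,\mathrm{d}t\,\mathrm{d}h .
\end{align*}
Here one must ensure that $x+h$ stays in the region where the hypothesis applies: restricting to $x,x+h\in B_R$ is harmless because the hypothesis already integrates over $x\in B_R$ and $w$ is defined on $B_{R+d}$. Inserting the assumed bound $M^q|h|^{\gamma q}$ gives
\begin{align*}
M^q\int_{B_d}|h|^{(\gamma-\beta)q-N}\,\mathrm{d}h
=\frac{N\omega_N}{(\gamma-\beta)q}\,d^{(\gamma-\beta)q}M^q .
\end{align*}
This is finite precisely because $\beta<\gamma$, and it yields the first term of the claimed estimate with $C=C(N,q)$.

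\emph{Long-range part.} For $|x-y|\ge d$, I would use the elementary inequality $|w(x)-w(y)|^q\le 2^{q-1}(|w(x)|^q+|w(y)|^q)$ together with symmetry in $x$ and $y$. This bounds the contribution by
\begin{align*}
2^q\int_{T_0}^{T_1}\int_{B_R}|w(x,t)|^q\int_{|h|\ge d}|h|^{-N-\beta q}\,\mathrm{d}h\,\mathrm{d}x\,\mathrm{d}t
=\frac{2^qN\omega_N}{\beta q\,d^{\beta q}}\,\|w\|^q_{L^q(B_R\times(T_0,T_1))}.
\end{align*}
This is the second term of the estimate.

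\emph{Conclusion.} Adding the two parts gives the stated inequality. The constant depends only on $N$ and $q$, since the dependence on $\beta$ and $\gamma$ is exhibited explicitly. The qualitative statement $w\in W^{\beta,q}(B_R)$ then holds for a.e. $t$, because the time-integrated seminorm is finite. The assumptions $M\ge1$ and $T_1-T_0<1$ are not needed for the argument; they are presumably stated only to simplify later use of the lemma.

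There is no serious obstacle. The only care needed is the measure-theoretic step that justifies the change of variables restricted to $B_R$ and the use of Fubini on the product with the time variable. This is routine since all integrands are nonnegative.
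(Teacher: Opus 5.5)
Your proof is correct. It splits the double integral at $|x-y|=d$, treats the near part via $y=x+h$ and the hypothesis (giving $N\omega_N d^{(\gamma-\beta)q}M^q/((\gamma-\beta)q)$) and the far part via $|w(x)-w(y)|^q\le 2^{q-1}(|w(x)|^q+|w(y)|^q)$; this is the standard Brasco--Lindgren--Schikorra argument integrated in time, which is what the paper relies on, since it states this lemma without proof and refers to its earlier works.
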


\begin{lemma}
	\label{lem7}
	Let $q>1$, $\gamma\in (0,1)$, $R\in(0,1)$ and $d\in (0,R)$. Then there exists a constant $C=C(N,q)$ such that for any $w\in L^q\left( T_0,T_1;W^{\gamma,q}(B_R)\right)$, we have
	\begin{align*}
		\int_{T_0}^{T_1}\int_{B_{R-d}}|\tau_hu|^q\,\mathrm{d}x\mathrm{d}t\le C|h|^{\gamma q}\left[(1-\gamma)\int_{T_0}^{T_1}[w]^q_{W^{\gamma,q}(B_R)}\mathrm{d}t+\frac{1}{\gamma d^q}\|w\|_{L^q\left( B_R\times(T_0,T_1)\right) } \right] 
	\end{align*}
	for any $h\in B_d\backslash\left\lbrace0\right\rbrace $.
\end{lemma}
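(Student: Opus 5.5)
This is the standard estimate showing that $W^{\gamma,q}$ control yields a bound on $L^q$ norms of finite differences, proved time slice by time slice and then integrated in $t$. (The statement writes $\tau_h u$ on the left, but it means $\tau_h w$.)

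\emph{Step 1: reduce to a single time.} Fix $t\in(T_0,T_1)$ and write $w=w(\cdot,t)$. Fix $h\in B_d\setminus\{0\}$ and set $\rho:=|h|<d$. For $x\in B_{R-d}$, both $x$ and $x+h$ lie in $B_R$. The ball $B_\rho(x+h/2)$ is contained in $B_R$ whenever $|h|/2+\rho<d+\dots$ fails to be guaranteed, so I will use $B_{\rho/2}(x+h/2)$ instead. This ball lies within distance $\rho$ of both $x$ and $x+h$, and since $x\in B_{R-d}$ and $\rho<d$, it is inside $B_R$.

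\emph{Step 2: average over the intermediate ball.} Let $B:=B_{\rho/2}(x+h/2)$. Then
\begin{align*}
|\tau_h w(x)|^q\le 2^{q-1}\frac{1}{|B|}\int_B\left(|w(x+h)-w(z)|^q+|w(z)-w(x)|^q\right)\mathrm{d}z .
\end{align*}
For $z\in B$ we have $|z-x|\le\rho$ and $|z-x-h|\le\rho$. Multiplying and dividing by $\rho^{N+\gamma q}$ and using $|B|\simeq\rho^N$ gives
\begin{align*}
|\tau_h w(x)|^q\le C\rho^{\gamma q}\int_{B_R}\left(\frac{|w(x+h)-w(z)|^q}{|x+h-z|^{N+\gamma q}}+\frac{|w(x)-w(z)|^q}{|x-z|^{N+\gamma q}}\right)\mathrm{d}z .
\end{align*}

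\emph{Step 3: integrate in space.} Integrate over $x\in B_{R-d}$. In the first term, change variables $x+h\mapsto x$; the image lies in $B_R$. This gives
\begin{align*}
\int_{B_{R-d}}|\tau_h w|^q\,\mathrm{d}x\le C|h|^{\gamma q}[w]^q_{W^{\gamma,q}(B_R)} .
\end{align*}
Integrating in $t$ then proves the bound, without the factor $(1-\gamma)$ and without the lower-order term.

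\emph{Step 4: recover the constant $(1-\gamma)$.} The statement keeps the factor $(1-\gamma)$ in front of the seminorm, which the crude constant in Step 3 does not produce. The main obstacle is getting this sharp dependence on $\gamma$; I would follow the Bourgain--Brezis--Mironescu-type argument of Brasco--Lindgren--Schikorra:
\begin{itemize}
\item Mollify at scale $\rho$: let $w_\rho=w*\eta_\rho$ (defined on $B_{R-d}+B_{|h|}$, which is fine after shrinking slightly).
\item Split $\tau_h w=\tau_h(w-w_\rho)+\tau_h w_\rho$.
\item Bound $\|w-w_\rho\|_{L^q}^q\le C\rho^{\gamma q}\rho^{-N}\iint_{|x-y|<\rho}|w(x)-w(y)|^q$. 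The sharp estimate $\iint_{|x-y|<\rho}\frac{|w(x)-w(y)|^q}{|x-y|^{N+\gamma q}}$-type comparisons supply the $(1-\gamma)$ factor.
\item Bound $|\tau_h w_\rho|\le|h|\,\|\nabla w_\rho\|$, where the gradient is controlled by $\rho^{-1}$ times oscillations of $w$ on $\rho$-balls.
\item Where cutoff effects near $\partial B_R$ appear (when the ball is not fully inside), control them by the term $\gamma^{-1}d^{-q}\|w\|^q_{L^q}$. This term arises because oscillations at scales larger than $d$ are bounded crudely by $2\|w\|_{L^q}$, giving $|h|^{\gamma q}d^{-\gamma q}\le |h|^{\gamma q}d^{-q}$ since $d<R<1$.
\end{itemize}
Summing the pieces and integrating in time gives the stated estimate.
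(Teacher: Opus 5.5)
\textbf{There is a genuine gap: your argument never produces the factor $(1-\gamma)$.}

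Your Steps 1--3 are correct, but they only give the single-scale bound
\[
\int_{T_0}^{T_1}\int_{B_{R-d}}|\tau_hw|^q\,\mathrm{d}x\mathrm{d}t\le C(N,q)|h|^{\gamma q}\int_{T_0}^{T_1}[w]^q_{W^{\gamma,q}(B_R)}\,\mathrm{d}t ,
\]
and this does not imply the statement. For smooth non-constant $w$ one has $[w]^q_{W^{\gamma,q}(B_R)}\sim(1-\gamma)^{-1}$ as $\gamma\to1$. So the factor $(1-\gamma)$, with a constant $C(N,q)$ independent of $\gamma$, is a real gain that the lower-order term cannot make up. The paper gives no proof of this lemma; it is quoted among the auxiliary results from the authors' earlier work. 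Everything therefore rests on your Step 4, and Step 4 does not deliver.

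The bound you write for $\|w-w_\rho\|^q_{L^q}$ carries a spurious factor $\rho^{\gamma q}$; Jensen gives $\|w-w_\rho\|^q_{L^q}\le C\rho^{-N}\iint_{|x-y|<\rho}|w(x)-w(y)|^q$. More importantly, once both $\tau_h(w-w_\rho)$ and $\tau_hw_\rho$ are estimated at the single scale $\rho\simeq|h|$, inserting $|x-y|^{N+\gamma q}\le\rho^{N+\gamma q}$ returns exactly the constant of Step 3, with no $(1-\gamma)$. Your sentence that ``sharp comparisons supply the $(1-\gamma)$ factor'' is the whole content of the lemma, and it is asserted rather than proved. There is also a domain issue: with $\rho=|h|$, evaluating $w_\rho$ at $x+h$ requires $2|h|<d$ to stay inside $B_R$. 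So it is not ``fine after shrinking slightly'' for $|h|\in[d/2,d)$.

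\textbf{The missing idea is a summation over dyadic scales below $|h|$, and your Step 2 already supplies the building block.}

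For $j\ge0$ let $h_j:=2^{-j}h$ and telescope $\tau_hw(x)=\sum_{i=0}^{2^j-1}\tau_{h_j}w(x+ih_j)$. This gives
\[
\int_{B_{R-d}}|\tau_hw|^q\,\mathrm{d}x\le 2^{j(q-1)}\sum_i\int_{B_{R-d}}|\tau_{h_j}w(x+ih_j)|^q\,\mathrm{d}x .
\]
Treat each term as in your Step 2, but average over the smaller ball $B_{|h_j|/4}(x+ih_j+h_j/2)$.
\begin{itemize}
\item This ball lies in $B_R$, because $|x|+(i+1)|h_j|<R-d+|h|<R$.
\item It forces $|z-y|\in(|h_j|/4,3|h_j|/4)$ for both endpoints $y\in\{x+ih_j,\,x+(i+1)h_j\}$.
\end{itemize}
After the translations $y=x+ih_j$ and $y=x+(i+1)h_j$ (both landing in $B_R$), this yields
\begin{align*}
\int_{B_{R-d}}|\tau_hw|^q\,\mathrm{d}x&\le C(N,q)\,2^{j(1-\gamma)q}|h|^{\gamma q}\iint_{A_j}\frac{|w(x)-w(y)|^q}{|x-y|^{N+\gamma q}}\,\mathrm{d}x\mathrm{d}y,\\
A_j&:=\left\{(x,y)\in B_R\times B_R:\ 2^{-j-2}|h|\le|x-y|\le 2^{-j}|h|\right\}.
\end{align*}

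The sets $A_j$ overlap at most twice. Multiply by $2^{-j(1-\gamma)q}$ and sum over $j\ge0$, using
\[
\sum_{j\ge0}2^{-j(1-\gamma)q}=\bigl(1-2^{-(1-\gamma)q}\bigr)^{-1}\ge\frac{1}{(1-\gamma)q\ln 2}.
\]
This gives $\int_{B_{R-d}}|\tau_hw|^q\,\mathrm{d}x\le C(N,q)(1-\gamma)|h|^{\gamma q}[w]^q_{W^{\gamma,q}(B_R)}$. Integrating in $t$ proves the lemma, even without the lower-order term. That term is the price paid in the alternative route of multiplying by a cutoff and applying the global estimate on $\mathbb{R}^N$.

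The geometric series is exactly where $(1-\gamma)$ comes from. A one-scale argument, whether by averaging or by mollification, cannot produce it.
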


\begin{lemma}
	\label{lem12.1}
	Let $q>1$, $M>0$ and $0<d<R$. Then, any $w\in L^q\bigl(B_R\times(T_0,T_1)\bigl)$ that satisfies
	\begin{align*}
		\int_{T_0}^{T_1}\int_{B_{R-d}}|\tau_hw|^q\,\mathrm{d}x\mathrm{d}t\le M^q|h|^q
	\end{align*}
	for any $h\in B_d\backslash \left\lbrace 0\right\rbrace $ is weakly differentiable in $B_{R-d}$. Moreover, we have
	\begin{align*}
		\int_{T_0}^{T_1}\int_{B_{R-d}}|\nabla u|^q\,\mathrm{d}x\mathrm{d}t\le C(N)M^q.
	\end{align*}
\end{lemma}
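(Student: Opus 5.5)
The plan is the classical Nirenberg difference-quotient argument, done in the space-time cylinder and using only reflexivity of $L^q$ for $q>1$. The gradient in the conclusion is understood as $\nabla w$ (the $u$ there should read $w$).

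\emph{Step 1: uniform bounds.} Fix a coordinate direction $e_i$, $i\in\{1,\dots,N\}$, and take $h=\lambda e_i$ with $0<|\lambda|<d$. Set $D_\lambda^i w:=\tau_{\lambda e_i}w/\lambda$. The hypothesis gives
\begin{align*}
\int_{T_0}^{T_1}\int_{B_{R-d}}|D^i_\lambda w|^q\,\mathrm{d}x\mathrm{d}t\le M^q
\end{align*}
uniformly in $\lambda$. Since $q>1$, $L^q\bigl(B_{R-d}\times(T_0,T_1)\bigr)$ is reflexive. Hence there is a sequence $\lambda_k\to0$ with $D^i_{\lambda_k}w\rightharpoonup g_i$ weakly in this space. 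Weak lower semicontinuity of the norm gives $\|g_i\|_{L^q}\le M$.

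\emph{Step 2: identifying the weak derivative.} Let $\varphi\in C_c^\infty\bigl(B_{R-d}\times(T_0,T_1)\bigr)$. For $|\lambda|$ smaller than the distance from $\operatorname{supp}\varphi$ to $\partial B_{R-d}$, a change of variables $x\mapsto x-\lambda e_i$ yields the discrete integration by parts
\begin{align*}
\int_{T_0}^{T_1}\int_{B_{R-d}}D^i_\lambda w\,\varphi\,\mathrm{d}x\mathrm{d}t=-\int_{T_0}^{T_1}\int_{B_{R}} w\,\frac{\varphi(x,t)-\varphi(x-\lambda e_i,t)}{\lambda}\,\mathrm{d}x\mathrm{d}t.
\end{align*}
This is legitimate because $w\in L^q(B_R\times(T_0,T_1))$ and the shifted support stays inside $B_R$, as $|\lambda|<d$.

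Now let $\lambda=\lambda_k\to0$. The left side converges to $\int g_i\varphi$ by weak convergence. On the right, the quotient converges uniformly to $\partial_i\varphi$ with supports in a fixed compact set, so the right side tends to $-\int w\,\partial_i\varphi$. Hence $g_i=\partial_i w$ in the distributional sense in $B_{R-d}\times(T_0,T_1)$.

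To get weak differentiability in $B_{R-d}$ for a.e. $t$, take $\varphi(x,t)=\eta(t)\psi(x)$. Let $\psi$ range over a countable set dense in $C^1_c(B_{R-d})$, and use that $\eta$ is arbitrary. Fubini then gives
\begin{align*}
\int_{B_{R-d}} g_i(\cdot,t)\psi=-\int_{B_{R-d}} w(\cdot,t)\partial_i\psi
\end{align*}
for a.e. $t$, for all such $\psi$, and hence, by density, for all $\psi\in C^1_c(B_{R-d})$.

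\emph{Step 3: conclusion.} Summing over $i$ and using $|\nabla w|^q\le N^{q/2}\max_i|\partial_iw|^q\le N^{q/2}\sum_i|\partial_i w|^q$, we obtain
\begin{align*}
\int_{T_0}^{T_1}\int_{B_{R-d}}|\nabla w|^q\,\mathrm{d}x\mathrm{d}t\le N^{q/2}\sum_{i=1}^N\|g_i\|^q_{L^q}\le N^{1+q/2}M^q.
\end{align*}
This is the claimed bound, with a constant $C(N,q)$ coming from $N^{1+q/2}$.

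\emph{Main obstacle.} There is no deep obstacle here. The only points needing care are the following:
\begin{itemize}
\item the support bookkeeping in the discrete integration by parts, which is why $w$ must live on the larger ball $B_R$ and why $|h|<d$ is required;
\item the passage from a space-time distributional derivative to the slice-wise statement for a.e. $t$, handled by the product test functions and a countable dense family;
\item the strict inequality $q>1$, which is essential for the weak compactness in Step 1. For $q=1$ one would only obtain a BV conclusion.
\end{itemize}
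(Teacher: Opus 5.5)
Your proof is correct. It is the standard Nirenberg difference-quotient argument (uniform $L^q$ bounds, weak compactness from reflexivity, discrete integration by parts, slicing in $t$), which is what the paper relies on: it does not prove this lemma but imports it from the authors' earlier work.

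The $q$-dependence of your constant is not a weakness. Take $w(x)=|x|$ in $\mathbb{R}^2$ with $R=1$, $d=\frac12$. One may take $M^q=(T_1-T_0)|B_1|c_q$ with $c_q=\frac{1}{2\pi}\int_0^{2\pi}|\cos\theta|^q\,\mathrm{d}\theta\sim\sqrt{2/(\pi q)}$, while the left-hand side equals $(T_1-T_0)|B_{1/2}|$. So no constant depending on $N$ alone works, and the statement's $C(N)$ must be read as $C(N,q)$, or as $C(N)^q$ (e.g.\ $N^q$, via $\|\nabla w\|_{L^q}\le\sum_i\|\partial_i w\|_{L^q}\le NM$).
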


\begin{lemma}
	\label{lem10}
	Let $q\ge1$, $\gamma>0$, $M\ge0$, $0<r<R$, $T_0<T_1<T_0+1$ and $d\in \left( 0,\frac{1}{2}(R-r)\right] $. Then, there exists a constant $C=C(q)$ such that whenever $w\in L^q\bigl(B_R\times(T_0,T_1)\bigl)$ satisfies
	\begin{align*}
		\int_{T_0}^{T_1}\int_{B_r}|\tau_h(\tau_hw)|^q\,\mathrm{d}x\mathrm{d}t\le M^q|h|^{\gamma q}
	\end{align*}
	for any $h\in B_d\backslash \left\lbrace 0\right\rbrace $. Then in the case $\gamma\in (0,1)$, we have, for any $h\in B_{\frac{1}{2}d}\backslash\left\lbrace 0\right\rbrace $,
	\begin{align*}
		\int_{T_0}^{T_1}\int_{B_r}|\tau_hw|^q\,\mathrm{d}x\mathrm{d}t\le C(q)|h|^{\gamma q}\left[ \left( \frac{M}{1-\gamma}\right)^q+\frac{1}{d^{\gamma q}}\int_{T_0}^{T_1}\int_{B_R}|w|^q\,\mathrm{d}x\mathrm{d}t \right].
	\end{align*}
	In the case $\gamma>1$, we have
	\begin{align*}
		\int_{T_0}^{T_1}\int_{B_r}|\tau_hw|^q\,\mathrm{d}x\mathrm{d}t\le C(q)|h|^q\left[ \left( \frac{M}{\gamma-1}\right)^qd^{(\gamma-1)q}+\frac{1}{d^q}\int_{T_0}^{T_1}\int_{B_R}|w|^q\,\mathrm{d}x\mathrm{d}t \right].
	\end{align*}
	In the case $\gamma=1$, we have
	\begin{align*}
		\int_{T_0}^{T_1}\int_{B_r}|\tau_hw|^q\,\mathrm{d}x\mathrm{d}t\le C(q)|h|^{\lambda q}\left[ \left( \frac{M}{1-\lambda}\right)^qd^{(1-\lambda)q}+\frac{1}{d^{\lambda q}}\int_{T_0}^{T_1}\int_{B_R}|w|^q\,\mathrm{d}x\mathrm{d}t \right] 
	\end{align*}
	for any $0<\lambda<1$.
\end{lemma}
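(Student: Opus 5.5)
The plan is to prove Lemma \ref{lem10} with the classical dyadic telescoping argument of Brasco--Lindgren. It rests on the algebraic identity
\begin{align*}
	\tau_h(\tau_h w)(x)=w(x+2h)-2w(x+h)+w(x)=\tau_{2h}w(x)-2\tau_hw(x),
\end{align*}
that is, $\tau_hw=\frac12\tau_{2h}w-\frac12\tau_h(\tau_hw)$. Iterating this with $h$ replaced by $2h,4h,\dots$ gives, for every $k\in\mathbb{N}$,
\begin{align*}
	\tau_hw=2^{-k}\tau_{2^kh}w-\sum_{j=0}^{k-1}2^{-j-1}\tau_{2^jh}\bigl(\tau_{2^jh}w\bigr).
\end{align*}
We work throughout with the norm $\|\cdot\|:=\|\cdot\|_{L^q(B_r\times(T_0,T_1))}$ and apply Minkowski's inequality. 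Only at the end do we raise to the power $q$, which produces the constant $C(q)$.

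Fix $h\in B_{\frac12 d}\setminus\{0\}$ and choose $k\ge1$ such that $\frac d2\le 2^k|h|<d$. This is possible because $|h|<\frac d2$. Then $2^jh\in B_d\setminus\{0\}$ for $0\le j\le k-1$, so the hypothesis gives $\|\tau_{2^jh}(\tau_{2^jh}w)\|\le M(2^j|h|)^\gamma$. For the leading term, note that $B_r+2^kh\subset B_{r+d}\subset B_R$, since $d\le\frac12(R-r)$. Hence
\begin{align*}
	\|\tau_{2^kh}w\|\le 2\|w\|_{L^q(B_R\times(T_0,T_1))},\qquad 2^{-k}\le \frac{2|h|}{d}.
\end{align*}
Putting these together yields
\begin{align*}
	\|\tau_hw\|\le \frac{4|h|}{d}\|w\|_{L^q(B_R\times(T_0,T_1))}+\frac{M|h|^\gamma}{2}\sum_{j=0}^{k-1}2^{j(\gamma-1)}.
\end{align*}

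The three cases now differ only in how the geometric sum is handled.

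\emph{Case $\gamma\in(0,1)$.} The sum is bounded by $(1-2^{\gamma-1})^{-1}\le C/(1-\gamma)$, using $1-2^{-x}\ge cx$ for $x\in(0,1]$. Since $|h|<d$, we have $|h|/d\le |h|^\gamma/d^\gamma$. Raising to the power $q$ gives the first claimed estimate.

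\emph{Case $\gamma>1$.} The sum is at most $2^{k(\gamma-1)}/(2^{\gamma-1}-1)\le C\,2^{k(\gamma-1)}/(\gamma-1)$, using $2^x-1\ge x\ln 2$. Moreover, $|h|^\gamma2^{k(\gamma-1)}=|h|(2^k|h|)^{\gamma-1}\le |h|d^{\gamma-1}$. This gives the bound $C|h|\bigl[\frac{M}{\gamma-1}d^{\gamma-1}+d^{-1}\|w\|\bigr]$, and hence the second estimate.

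\emph{Case $\gamma=1$.} Here the sum grows like $k\sim\log(d/|h|)$, so we do not use it directly. Instead we observe that for $h\in B_d$ and $\lambda\in(0,1)$ we have $M|h|\le Md^{1-\lambda}|h|^\lambda$. The hypothesis therefore holds with exponent $\lambda$ and constant $M'=Md^{1-\lambda}$, and the case $\gamma<1$ applied with $\lambda$ gives exactly the third estimate.

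The only delicate points are bookkeeping. First, the dyadic steps $2^jh$ must stay in $B_d$ where the hypothesis is available, which is why we need $h\in B_{\frac12 d}$. Second, the translated ball $B_r+2^kh$ must stay inside $B_R$, which is why we need $d\le\frac12(R-r)$. Third, the constants must blow up exactly like $(1-\gamma)^{-1}$ and $(\gamma-1)^{-1}$, and not worse, as $\gamma\to1$. None of these is a genuine obstacle.
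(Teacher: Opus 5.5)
Your proof is correct and is the standard argument. The paper gives no proof of this lemma; it imports it, with the other auxiliary results of Section~\ref{sec2}, from the authors' earlier works. Those works rest on exactly the dyadic iteration of $\tau_{2h}w=\tau_h(\tau_hw)+2\tau_hw$ that you use, with the time variable entering only through the $L^q(B_r\times(T_0,T_1))$ norm. Your reduction of the case $\gamma=1$ to exponent $\lambda<1$ via $M|h|\le Md^{1-\lambda}|h|^{\lambda}$ reproduces the stated term $\bigl(\frac{M}{1-\lambda}\bigr)^q d^{(1-\lambda)q}$ exactly.
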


We conclude this section with two technical lemmas that will be essential in the proof of our main results. Lemma \ref{lem2} below is used to address the convergence issue in the formulation when the weak solution itself is involved in the test function, while Lemma \ref{lem3} will serve as the starting point for our application of the difference quotient technique.

For an open time interval $I$, we set $T:=\sup I$ and introduce the time mollification
\begin{align*}
	u_\varepsilon(x,t):=\frac{1}{\varepsilon}\int_t^{T}e^{\frac{t-\tau}{\varepsilon}}u(x,\tau)\,\mathrm{d}\tau.
\end{align*}

\begin{lemma}
	\label{lem2}
	Suppose $u\in L^1_\mathrm{loc}\bigl(I;W^{s_1,1}_\mathrm{loc}(\Omega)\bigl)\cap L^p_\mathrm{loc}\bigl( I;W^{s_p,p}_\mathrm{loc}(\Omega)\bigl)\cap L^{p-1}_\mathrm{loc}\bigl( I;L^{p-1}_{s_pp}(\mathbb{R}^N)\bigl)$, $v\in L^1\bigl( I;W^{s_1,1}_\mathrm{loc}(\Omega)\bigl)\cap L^p\bigl( I;W^{s_p,p}_\mathrm{loc}(\Omega)\bigl)\cap L^\infty_\mathrm{loc}(\Omega_I)$ and $Z\in L^\infty\bigl( \mathbb{R}^N\times\mathbb{R}^N\times I\bigl) $. Then, for any $\delta\ge1$ and $B_R(x_0)\times(T_0,T_1)\subset\subset\Omega\times I$, we have
	\begin{align}
		\label{1.3}
		&\quad\lim\limits_{\varepsilon\rightarrow0}\int_{T_0}^{T_1}\int_{\mathbb{R}^N}\int_{\mathbb{R}^N}Z\frac{\left( J_{\delta+1}(v_\varepsilon)\eta\right) (x,t)-\left( J_{\delta+1}(v_\varepsilon)\eta\right) (y,t)}{|x-y|^{N+s_1}}\,\mathrm{d}x\mathrm{d}y\mathrm{d}t\nonumber\\
		&=\int_{T_0}^{T_1}\int_{\mathbb{R}^N}\int_{\mathbb{R}^N}Z\frac{\left( J_{\delta+1}(v)\eta\right) (x,t)-\left( J_{\delta+1}(v)\eta\right) (y,t)}{|x-y|^{N+s_1}}\,\mathrm{d}x\mathrm{d}y\mathrm{d}t
	\end{align}
	and
	\begin{align}
		\label{1.4}
		&\quad\lim\limits_{\varepsilon\rightarrow0}\int_{T_0}^{T_1}\int_{\mathbb{R}^N}\int_{\mathbb{R}^N}\frac{J_p\left( u(x,t)-u(y,t)\right)\left[ \left( J_{\delta+1}(v_\varepsilon)\eta\right)(x,t)-\left( J_{\delta+1}(v_\varepsilon)\eta\right)(y,t)\right]}{|x-y|^{N+s_pp}}\,\mathrm{d}x\mathrm{d}y\mathrm{d}t\nonumber\\
		&=\int_{T_0}^{T_1}\int_{\mathbb{R}^N}\int_{\mathbb{R}^N}\frac{J_p\left( u(x,t)-u(y,t)\right)\left[ \left( J_{\delta+1}(v)\eta\right)(x,t)-\left( J_{\delta+1}(v)\eta\right)(x,y)\right] }{|x-y|^{N+s_pp}}\,\mathrm{d}x\mathrm{d}y\mathrm{d}t,
	\end{align}
	where $\eta\in C^1\bigl([T_0,T_1];C^1_0(B_R(x_0))\bigl)$.
\end{lemma}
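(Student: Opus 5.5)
The plan is a dominated-convergence argument applied to the difference of the two integrands. Set $w_\varepsilon:=J_{\delta+1}(v_\varepsilon)\eta$ and $w:=J_{\delta+1}(v)\eta$. Both vanish outside $B_R(x_0)$ for each $t$.

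First, I record the standard properties of the exponential time mollification. Since $v\in L^\infty_\mathrm{loc}(\Omega_I)$, on $B_R\times(T_0,T_1)$ we have $|v_\varepsilon|\le \|v\|_{L^\infty(B_R\times(T_0,T))}=:M$, up to the usual care near $T$; this is harmless because $T_1<T$ and the weight $e^{(t-\tau)/\varepsilon}$ concentrates near $\tau=t$. Moreover $v_\varepsilon\to v$ in $L^q(T_0,T_1;W^{s_q,q}(B_R))$ for $q\in\{1,p\}$, because the mollification commutes with spatial differences: $v_\varepsilon(x,t)-v_\varepsilon(y,t)=(v(x,\cdot)-v(y,\cdot))_\varepsilon(t)$. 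Passing to a subsequence, $v_\varepsilon\to v$ a.e. The general limit then follows, since every subsequence has a further subsequence with the same limit.

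Second, I split the double integral into the region $B_R\times B_R$ and twice the region $B_R\times(\mathbb{R}^N\setminus B_R)$; this is the local/nonlocal decomposition. On $B_R\times B_R$ I use the pointwise estimate
\[
|w_\varepsilon(x,t)-w_\varepsilon(y,t)|\le C(\delta,M,\eta)\bigl(|v_\varepsilon(x,t)-v_\varepsilon(y,t)|+|x-y|\bigr),
\]
which follows from Lemma \ref{lem9} with $\gamma=\delta\ge1$ (giving the bound $\delta(2M)^{\delta-1}|a-b|$) and from the Lipschitz continuity of $\eta$ in $x$. The same estimate holds for the difference $(w_\varepsilon-w)(x,t)-(w_\varepsilon-w)(y,t)$, with $|v_\varepsilon-v|(x)\,|x-y|$-type cross terms. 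I handle these by writing
\[
J_{\delta+1}(a)\eta(x)-J_{\delta+1}(b)\eta(y)=\bigl(J_{\delta+1}(a)-J_{\delta+1}(b)\bigr)\eta(x)+J_{\delta+1}(b)\bigl(\eta(x)-\eta(y)\bigr).
\]
For the $s_1$-term, $Z$ is bounded, so the integrand is dominated by
\[
\frac{C|v_\varepsilon(x)-v_\varepsilon(y)|}{|x-y|^{N+s_1}}+\frac{C|x-y|}{|x-y|^{N+s_1}}.
\]
The first part converges in $L^1$ by the $W^{s_1,1}$ convergence, and the second is integrable on $B_R\times B_R$. The generalized dominated convergence theorem (Vitali) then gives the limit. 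For the $s_p$-term I apply Hölder with exponents $p/(p-1)$ and $p$: $J_p(u(x)-u(y))|x-y|^{-(N+s_pp)(p-1)/p}$ lies in $L^{p'}$ since $u\in L^p(W^{s_p,p})$, while the $w_\varepsilon$-differences converge in the weighted $L^p$ norm by the same pointwise bound and the $W^{s_p,p}$ convergence.

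Third, on $x\in B_R$, $y\notin B_R$ the integrand reduces to $Z\,w_\varepsilon(x,t)|x-y|^{-N-s_1}$ (respectively with $J_p$). This region is the main obstacle, because $|x-y|$ may become small near $\partial B_R$. Since $\eta\in C^1_0(B_R)$, its support lies in some $B_{R-d}$, so $|x-y|\ge d$ there and the kernel is bounded by $C(d)(1+|y-x_0|)^{-N-s_1}$. Hence $|w_\varepsilon|\le M^\delta\|\eta\|_\infty$ yields an integrable dominant for the $s_1$-term. For the $p$-term the dominant is
\[
C\bigl(M^{p-1}+|u(y,t)|^{p-1}\bigr)(1+|y-x_0|)^{-N-s_pp},
\]
whose integral is controlled by $\|u\|^{p-1}_{L^{p-1}(B_R\times(T_0,T_1))}$ and $\int_{T_0}^{T_1}\mathrm{Tail}(u;x_0,R,t)^{p-1}\,\mathrm{d}t$, both finite by the assumption $u\in L^{p-1}_\mathrm{loc}(L^{p-1}_{s_pp})$. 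Pointwise a.e. convergence $w_\varepsilon\to w$ then finishes this region by dominated convergence. If $\eta$ is not compactly supported within $B_R$, I would instead use $|\eta(x)|\le C\,\mathrm{dist}(x,\partial B_R)\le C|x-y|$, which again cancels one power of the singularity.
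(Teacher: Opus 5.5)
Your argument is correct in substance, but it is organized differently from the paper's. The paper splits the double integral at $B_{R+d}$ rather than at $\partial B_R$. It writes the local part with the symmetric decomposition $[J_{\delta+1}(v_\varepsilon)(x)-J_{\delta+1}(v_\varepsilon)(y)]\frac{\eta(x)+\eta(y)}{2}+(\eta(x)-\eta(y))\frac{J_{\delta+1}(v_\varepsilon)(x)+J_{\delta+1}(v_\varepsilon)(y)}{2}$. It proves strong convergence $J_{\delta+1}(v_\varepsilon)\to J_{\delta+1}(v)$ in $L^1(T_0,T_1;W^{s_1,1}(B_{R+d}))$ by Vitali, but only for the first piece. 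Everything else it handles by weak-$*$ convergence in $L^\infty$ and weak convergence in $L^p(T_0,T_1;W^{s_p,p}(B_{R+d}))$, tested against fixed integrable (resp.\ $L^{p/(p-1)}$) factors built from $Z$, $\eta$ and $J_p(u(x,t)-u(y,t))$. You instead use one tool throughout: generalized dominated convergence with the explicit dominants $C(|v_\varepsilon(x,t)-v_\varepsilon(y,t)|+|x-y|)$. This gives strong convergence of every integrand and avoids weak compactness and the identification of weak limits. It is a perfectly good, arguably cleaner, way to organize the proof.

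What the paper's split buys is robustness near $\partial B_R$. On $B_R\times(\mathbb{R}^N\setminus B_{R+d})$ one has $|x-y|\ge d$ for free, so nothing is needed about where $\eta(\cdot,t)$ is supported inside $B_R$. Your split at $\partial B_R$ requires $\mathrm{supp}\,\eta(\cdot,t)\subset B_{R-d}$ uniformly in $t$. This holds for the $\eta(x)\xi(t)$ used in Lemma~\ref{lem3}, but it is not automatic from $\eta\in C^1([T_0,T_1];C^1_0(B_R(x_0)))$.

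Your fallback $|\eta(x)|\le C\,\mathrm{dist}(x,\partial B_R)$ does not rescue the $p$-term. With the dominant $(|u(x,t)|^{p-1}+|u(y,t)|^{p-1})\,\mathrm{dist}(x,\partial B_R)\,|x-y|^{-N-s_pp}$, the $y$-integral over $\mathbb{R}^N\setminus B_R$ behaves like $|u(x,t)|^{p-1}\mathrm{dist}(x,\partial B_R)^{1-s_pp}$. That is not integrable on $B_R$ once $s_pp\ge 2$.

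The remedy is exactly the paper's: absorb the near-boundary pairs into a local region $B_{R+d}\times B_{R+d}$. There $v$ is bounded, $v_\varepsilon\to v$ in $L^1(T_0,T_1;W^{s_1,1}(B_{R+d}))$ and $L^p(T_0,T_1;W^{s_p,p}(B_{R+d}))$, the zero extension of $\eta$ is Lipschitz, and $u\in W^{s_p,p}(B_{R+d})$. Your dominated-convergence argument then goes through verbatim.

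Two minor points. First, in the nonlocal dominant, $M^{p-1}$ should read $|u(x,t)|^{p-1}$, since only $v$ is assumed bounded; your later use of $\|u\|_{L^{p-1}(B_R\times(T_0,T_1))}$ shows this is what you meant. Second, the uniform bound $|v_\varepsilon|\le M$ on $(T_0,T_1)$ does not follow from the weight concentrating near $\tau=t$. That only makes the contribution from $\tau$ near $T$ small in integral norms, not in $L^\infty$. The paper asserts the same $L^\infty$ bound without further comment, though, so this is a shared gloss rather than a defect of your route.
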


\begin{proof}
	We set $d:=\frac{1}{2}\min{\left\lbrace \mathrm{dist}\left( B_R,\partial\Omega\right) , T_0, T-T_1\right\rbrace }$ and divide the term on the left-hand side of \eqref{1.3} into two parts:
	\begin{align*}
		&\quad\int_{T_0}^{T_1}\int_{\mathbb{R}^N}\int_{\mathbb{R}^N}Z\frac{\left( J_{\delta+1}(v_\varepsilon)\eta\right) (x,t)-\left( J_{\delta+1}(v_\varepsilon)\eta\right) (y,t)}{|x-y|^{N+s_1}}\,\mathrm{d}x\mathrm{d}y\mathrm{d}t\\
		&=\int_{T_0}^{T_1}\int_{B_{R+d}(x_0)}\int_{B_{R+d}(x_0)}Z\frac{\left( J_{\delta+1}(v_\varepsilon)\eta\right) (x,t)-\left( J_{\delta+1}(v_\varepsilon)\eta\right) (y,t)}{|x-y|^{N+s_1}}\,\mathrm{d}x\mathrm{d}y\mathrm{d}t\\
		&\quad+2\int_{T_0}^{T_1}\int_{B_{R}(x_0)}\int_{\mathbb{R}^N\backslash B_{R+d}(x_0)}Z\frac{\left( J_{\delta+1}(v_\varepsilon)\eta\right)(x,t)}{|x-y|^{N+s_1}}\,\mathrm{d}x\mathrm{d}y\mathrm{d}t\\
		&=:S_{1,1}+2S_{1,2}.
	\end{align*}
	For the local term $S_{1,1}$, note that
	\begin{align*}
		&\quad\left( J_{\delta+1}(v_\varepsilon)\eta\right)(x,t)-\left( J_{\delta+1}(v_\varepsilon)\eta\right)(y,t)\\
		&=\left[ J_{\delta+1}(v_\varepsilon)(x,t)-J_{\delta+1}(v_\varepsilon)(y,t)\right]\frac{\eta(x,t)+\eta(y,t)}{2}\\
		&\quad+\left( \eta(x,t)-\eta(y,t)\right)\frac{J_{\delta+1}(v_\varepsilon)(x,t)+J_{\delta+1}(v_\varepsilon)(y,t)}{2},
	\end{align*}
	we have
	\begin{align*}
		S_{1,1}&=\int_{T_0}^{T_1}\int_{B_{R+d}(x_0)}\int_{B_{R+d}(x_0)}Z\frac{J_{\delta+1}(v_\varepsilon)(x,t)-J_{\delta+1}(v_\varepsilon)(y,t)}{|x-y|^{N+s_1}}\cdot\frac{\eta(x,t)+\eta(y,t)}{2}\,\mathrm{d}x\mathrm{d}y\mathrm{d}t\\
		&\quad+\int_{T_0}^{T_1}\int_{B_{R+d}(x_0)}\int_{B_{R+d}(x_0)}Z\frac{\eta(x,t)-\eta(y,t)}{|x-y|^{N+s_1}}\cdot\frac{J_{\delta+1}(v_\varepsilon)(x,t)+J_{\delta+1}(v_\varepsilon)(y,t)}{2}\,\mathrm{d}x\mathrm{d}y\mathrm{d}t\\
		&=:S_{1,3}+S_{1,4}.
	\end{align*}
	Recalling that
	\begin{align*}
		v_\varepsilon\rightarrow v\quad\text{in }  L^1_{\mathrm{loc}}\bigl(I; W^{s,1}\left(B_{R+d}(x_0)\right)\bigl),
	\end{align*}
	we obatin
	\begin{align}
		\label{1.5}
		J_{\delta+1}(v_\varepsilon)\rightarrow J_{\delta+1}(v)\quad\text{a.e. in }B_{R+d}\times(T_0,T_1).
	\end{align}
	By $v$ is locally bounded in $\Omega_I$, one can get
	\begin{align}
		\label{1.51}
		\left| J_{\delta+1}(v_\varepsilon)(x,t)-J_{\delta+1}(v_\varepsilon)(y,t)\right|\le \delta\|v\|_{L^\infty(B_{R+d}\times(T_0,T_1))}^{\delta-1}|v_\varepsilon(x,t)-v_\varepsilon(y,t)|.
	\end{align}
	
	Since
	\begin{align*}
		\frac{v_{\varepsilon}(x,t)-v_{\varepsilon}(y,t)}{|x-y|^{N+s_1}}\rightarrow\frac{v(x,t)-v(y,t)}{|x-y|^{N+s_1}}\quad\text{in }L^1\bigl( B_{R+d}(x_0)\times B_{R+d}(x_0)\times (T_0,T_1)\bigl), 
	\end{align*}
	one has, for $\dfrac{\varepsilon'}{\delta\|u\|^{\delta-1}_{L^\infty\left( B_{R+d}(x_0)\times(T_0-d,T_1+d)\right)}}>0$, there exists $\delta_0$ such that for any $E\subset B_{R+d}(x_0)\times B_{R+d}(x_0)\times(T_0,T_1)$, satisfying $|E|<\delta_0$, there holds
	\begin{align*}
		\iiint_E\frac{v_{\varepsilon}(x,t)-v_{\varepsilon}(y,t)}{|x-y|^{N+s_1}}\,\mathrm{d}x\mathrm{d}y\mathrm{d}t<\dfrac{\varepsilon'}{\delta\|u\|^{\delta-1}_{L^\infty\left( B_{R+d}(x_0)\times(T_0-d,T_1+d)\right)}}.
	\end{align*}
	The above fact enables us to obtain
	\begin{align}
		\label{1.6}
		\iiint_E\frac{J_{\delta+1}(v_\varepsilon)(x,t)-J_{\delta+1}(v_\varepsilon)(y,t)}{|x-y|^{N+s_1}}\,\mathrm{d}x\mathrm{d}y\mathrm{d}t<\varepsilon'.
	\end{align}
	Combining \eqref{1.5}, \eqref{1.6} and using Vitali's convergence theorem, we have
	\begin{align*}
		J_{\delta+1}(v_\varepsilon)\rightarrow J_{\delta+1}(v)\quad\text{in }L^1\bigl(T_0,T_1;W^{s_1,1}(B_{R+d}(x_0))\bigl).
	\end{align*}
	Thus, we deduce
	\begin{align*}
		\lim\limits_{\varepsilon\rightarrow0}S_{1,3}=\int_{T_0}^{T_1}\int_{B_{R+d}(x_0)}\int_{B_{R+d}(x_0)}Z\frac{J_{\delta+1}(v)(x,t)-J_{\delta+1}(v)(y,t)}{|x-y|^{N+s_1}}\cdot\frac{\eta(x,t)+\eta(y,t)}{2}\,\mathrm{d}x\mathrm{d}y\mathrm{d}t.
	\end{align*}
	
	As to the terms $S_{1,2}$ and $S_{1,4}$, from
	\begin{align*}
		J_{\delta+1}(v_\varepsilon)\rightarrow J_{\delta+1}(v)\quad\text{a.e. in }B_{R+d}(x_0)\times(T_0-d,T_1+d)
	\end{align*}
	and
	\begin{align*}
		\|J_{\delta+1}(v_\varepsilon)\|_{L^\infty\left( B_{R+d}(x_0)\times(T_0,T_1)\right) }\le\|v\|^\delta_{L^\infty\bigl( B_{R+d}(x_0)\times(T_0-d,T_1+d)\bigl) },
	\end{align*}
	we have
	\begin{align}
		\label{1.61}
		J_{\delta+1}(v_\varepsilon)\rightarrow J_{\delta+1}(v)\quad\text{weakly-}^*\text{ in }L^\infty\bigl( B_{R+d}(x_0)\times(T_0,T_1)\bigl). 
	\end{align}
	Therefore, one can get
	\begin{align*}
		\lim\limits_{\varepsilon\rightarrow0}S_{1,2}=\int_{T_0}^{T_1}\int_{B_R(x_0)}\int_{\mathbb{R}^N\backslash B_{R+d}(x_0)}Z\frac{\left[ J_{\delta+1}(v)\eta\right](x,t)}{|x-y|^{N+s_1}}\,\mathrm{d}x\mathrm{d}y\mathrm{d}t
	\end{align*}
	and
	\begin{align*}
		\lim\limits_{\varepsilon\rightarrow0}S_{1,4}=\int_{T_0}^{T_1}\int_{B_{R+d}(x_0)}\int_{B_{R+d}(x_0)}Z\frac{\eta(x,t)-\eta(y,t)}{|x-y|^{N+s_1}}\frac{J_{\delta+1}(v)(x,t)+J_{\delta+1}(v)(y,t)}{2}\,\mathrm{d}x\mathrm{d}y\mathrm{d}t,
	\end{align*}
	where we utilized the fact
	\begin{align*}
		&\quad\int_{T_0}^{T_1}\int_{B_R(x_0)}\int_{\mathbb{R}^N\backslash B_{R+d}(x_0)}\frac{|Z(x,y,t)\eta(x,t)|}{|x-y|^{N+s_1}}\,\mathrm{d}x\mathrm{d}y\mathrm{d}t\\
		&\le \|Z\|_{L^\infty\bigl(\mathbb{R}^N\times\mathbb{R}^N\times(0,T)\bigl)}\|\eta\|_{L^\infty\bigl(B_R(x_0)\times(T_0,T_1)\bigl)}|T_1-T_0||B_R(x_0)|\int_{\mathbb{R}^N\backslash B_{R+d}}\frac{\mathrm{d}y}{|y|^{N+s_1}}\\
		&<+\infty.
	\end{align*}
	
	Similar to the proof of showing \eqref{1.3}, we deal with the following three parts one by one:
	\begin{align*}
		&S_{p,1}:=\int_{T_0}^{T_1}\int_{B_{R+d}(x_0)}\int_{B_{R+d}(x_0)}\frac{J_p\left( u(x,t)-u(y,t)\right)\left( J_{\delta+1}(v_\varepsilon)(x,t)-J_{\delta+1}(v_\varepsilon)(y,t)\right)}{|x-y|^{N+s_pp}}\\
		&\qquad\qquad\qquad\qquad\qquad\qquad\qquad\times\frac{\eta(x,t)+\eta(y,t)}{2}\,\mathrm{d}x\mathrm{d}y\mathrm{d}t,\\
		&S_{p,1}:=\int_{T_0}^{T_1}\int_{B_{R+d}(x_0)}\int_{B_{R+d}(x_0)}\frac{J_p\left( u(x,t)-u(y,t)\right)\left( \eta(x,t)-\eta(y,t)\right)}{|x-y|^{N+s_pp}}\\
		&\qquad\qquad\qquad\qquad\qquad\qquad\qquad\times\frac{J_{\delta+1}(v_\varepsilon)(x,t)+J_{\delta+1}(v_\varepsilon)(y,t)}{2}\,\mathrm{d}x\mathrm{d}y\mathrm{d}t,\\
		&S_{p,3}:=\int_{T_0}^{T_1}\int_{B_{R}(x_0)}\int_{\mathbb{R}^N\backslash B_{R+d}(x_0)}\frac{J_p\left( u(x,t)-u(y,t)\right)\left[ J_{\delta+1}(v_\varepsilon)\eta\right](x,t)}{|x-y|^{N+s_pp}}\,\mathrm{d}x\mathrm{d}y\mathrm{d}t.
	\end{align*}
	From \eqref{1.51}, we find
	\begin{align*}
		\frac{\left| J_{\delta+1}(v_\varepsilon)(x,t)-J_{\delta+1}(v_\varepsilon)(y,t)\right| }{|x-y|^{\frac{N}{p}+s_p}}\le \frac{\delta\|v\|_{L^\infty\left( B_{R+d}(x_0)\times(T_0-d,T_1+d)\right)}\left|v_\varepsilon(x,t)-v_\varepsilon(y,t)\right| }{|x-y|^{\frac{N}{p}+s_p}},
	\end{align*}
	which implies that $\left\lbrace J_{\delta+1}(v_\varepsilon)\right\rbrace_{\varepsilon>0} $ is bounded in $L^p\left( T_0,T_1;W^{s_p,p}(B_{R+d}(x_0))\right)$. It is easy to see that, up a subsequence,
	\begin{align}
		\label{1.7}
		J_{\delta+1}(v_\varepsilon)\rightarrow J_{\delta+1}(v)\quad\text{weakly in }L^p\left( T_0,T_1;W^{s_p,p}(B_{R+d}(x_0))\right). 
	\end{align}
	Since $u\in L^p_\mathrm{loc}\bigl(I;W^{s_p,p}_\mathrm{loc}(\Omega)\bigl)\cap L^\infty\bigl(I;L^{p-1}_{s_pp}(\mathbb{R}^N)\bigl)$, we know that
	\begin{align*}
			&\quad\int_{T_0}^{T_1}\int_{B_{R+d}(x_0)}\int_{B_{R+d}(x_0)}\frac{\left| J_p\left(u(x,t)-u(y,t)\right)\right|^{\frac{p}{p-1}}}{|x-y|^{N+s_pp}}\,\mathrm{d}x\mathrm{d}y\mathrm{d}t<+\infty,\\
			&\quad\int_{T_0}^{T_1}\int_{B_{R+d}(x_0)}\int_{B_{R+d}(x_0)}\frac{\left| J_p\left(u(x,t)-u(y,t)\right)(\eta(x,t)-\eta(y,t))\right|}{|x-y|^{N+s_pp}}\,\mathrm{d}x\mathrm{d}y\mathrm{d}t<+\infty
	\end{align*}
	and
	\begin{align*}
		&\quad\int_{T_0}^{T_1}\int_{B_R(x_0)}\int_{\mathbb{R}^N\backslash B_{R+d}(x_0)}\frac{\left| J_p\left(u(x,t)-u(y,t)\right)\right| }{|x-y|^{N+s_pp}}\,\mathrm{d}x\mathrm{d}y\mathrm{d}t\\
		&\le\int_{T_0}^{T_1}\int_{B_R(x_0)}\int_{\mathbb{R}^N\backslash B_{R+d}(x_0)}\frac{|u(x,t)|^{p-1}+|u(y,t)|^{p-1}}{|x-y|^{N+s_pp}}\,\mathrm{d}x\mathrm{d}y\mathrm{d}t\\
		&\le \frac{C(N,s_p,p)}{d^{s_pp}}\int_{T_0}^{T_1}\int_{B_R(x_0)}|u(x,t)|^{p-1}\,\mathrm{d}x\mathrm{d}t\\
		&\quad+\int_{T_0}^{T_1}\int_{B_R(x_0)}\int_{\mathbb{R}^N\backslash B_{R+d}(x_0)}\frac{|u(y,t)|^{p-1}}{|x-y|^{N+s_pp}}\,\mathrm{d}x\mathrm{d}y\mathrm{d}t\\
		&<+\infty.
	\end{align*}
	Therefore, combining \eqref{1.6}, \eqref{1.7} and the above three facts, we obtain
	\begin{align*}
		&\quad\lim\limits_{\varepsilon\rightarrow0}\left( S_{p,1}+S_{p,2}+2S_{p,3}\right)\\
		&=\int_{T_0}^{T_1}\int_{\mathbb{R}^N}\int_{\mathbb{R}^N}\frac{J_p\left( u(x,t)-u(y,t)\right)\left[ \left( J_{\delta+1}(v)\eta\right)(x,t)-\left( J_{\delta+1}(v)\eta\right)(x,y)\right] }{|x-y|^{N+s_pp}}\,\mathrm{d}x\mathrm{d}y\mathrm{d}t.
	\end{align*}
This finishes the proof.
\end{proof}

In the following lemma, we set function $\eta\in C^1_0\bigl(B_R(x_0)\bigl)$ and $\xi\in C^1\left( [T_0,T_1]\right)$ that satisfies $\xi(T_0)=0$, $\xi(t)\ge0$, $\xi(t)=1$ in $[T_0+S,T_1]$ and $|\xi'(t)|\le \frac{C}{S}$ for $S\in (0,T_1-T_0)$.

\begin{lemma}
	\label{lem3}
	Let $B_R(x_0)\times(T_0,T_1)\subset\subset\Omega\times I$ and let $u$ be a locally bounded weak solution to problem \eqref{1.1}. Then, for any $h\in B_d$ with $d:=\frac{1}{2}\min\left\lbrace \mathrm{dist}(B_R,\partial\Omega), T_0, \sup I-T_1\right\rbrace$, we have
	\begin{align}
		\label{3.0}
		&\quad\int_{T_0}^{T_1}\int_{\mathbb{R}^N}\int_{\mathbb{R}^N}(Z-Z_h)\frac{J_{\delta+1}(\tau_hu)(x,t)\eta^p(x)\xi(t)-J_{\delta+1}(\tau_hu)(y,t)\eta^p(y)\xi(t)}{|x-y|^{N+s_1}}\,\mathrm{d}x\mathrm{d}y\mathrm{d}t\nonumber\\
		&\quad+\int_{T_0}^{T_1}\int_{\mathbb{R}^N}\int_{\mathbb{R}^N}\frac{\left[ J_{p}\left(u_h(x,t)-u_h(y,t)\right)-J_{p}\left(u(x,t)-u(y,t)\right)\right] }{|x-y|^{N+s_pp}}\nonumber\\
		&\qquad\qquad\qquad\qquad\times\left[ J_{\delta+1}(\tau_hu)(x,t)\eta^p(x)\xi(t)-J_{\delta+1}(\tau_hu)(y,t)\eta^p(y)\xi(t)\right]\mathrm{d}x\mathrm{d}y\mathrm{d}t\nonumber\\
		&\quad+\int_{B_R(x_0)}\frac{|\tau_hu|^{\delta+1}(x,T_0+S)}{\delta+1}\eta(x)\,\mathrm{d}x\nonumber\\
		&\le \int_{T_0}^{T_1}\int_{B_R(x_0)}\frac{|\tau_hu|^{\delta+1}}{\delta+1}\eta(x)\xi'(t)\,\mathrm{d}x\mathrm{d}t,
	\end{align}
	where $Z_h:=Z(x+h,y+h, t)$.
\end{lemma}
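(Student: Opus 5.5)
The idea is to test the weak formulation \eqref{1.2} twice: once for $u$ on the shifted cylinder and once for $u$ itself, with the test function $\varphi=J_{\delta+1}\bigl((\tau_hu)_\varepsilon\bigr)\eta^p\xi$. Here $(\cdot)_\varepsilon$ is the exponential time mollification introduced before Lemma~\ref{lem2}. Since $h\in B_d$, the translate $u_h(x,t):=u(x+h,t)$ is a weak solution in $(\Omega-h)\times I$ with auxiliary function $Z_h$, and $B_R(x_0)\subset\subset\Omega-h$. Subtracting the two identities gives an identity with three kinds of terms: the time terms involving $\tau_hu$; the $s_1$-term with kernel factor $(Z_h-Z)$ tested against $\varphi$; and the $s_p$-term with $J_p(u_h(x)-u_h(y))-J_p(u(x)-u(y))$. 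The mollified test function is admissible because $(\tau_hu)_\varepsilon$ is $C^1$ in time, lies in the same spatial Sobolev spaces, and is locally bounded since $u$ is. The signs in \eqref{3.0} come out after multiplying by $-1$, with $Z-Z_h$ in place of $Z_h-Z$ as in the statement.

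\emph{Spatial terms.} I would pass $\varepsilon\to0$ in both nonlocal terms by invoking Lemma~\ref{lem2} with $v=\tau_hu$ and with $\eta^p\xi$ in place of $\eta$. The function $v$ satisfies the hypotheses: it is locally bounded and in $L^1(W^{s_1,1})\cap L^p(W^{s_p,p})$ near $B_R$. Also $Z,Z_h\in L^\infty$, and the $J_p$-differences are handled by \eqref{1.4} applied separately to $u$ and to $u_h$. This produces exactly the first two integrals of \eqref{3.0}.

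\emph{Time term.} This is the main obstacle, and it is why only an inequality is claimed. Write $w=\tau_hu$ and $w_\varepsilon$ for its mollification. The time part reads $-\iint w\,\partial_t\varphi$ plus boundary terms. I would use the defining ODE of the backward mollifier, $\partial_t w_\varepsilon=\frac{1}{\varepsilon}(w_\varepsilon-w)$, to replace $w$ by $w_\varepsilon-\varepsilon\partial_tw_\varepsilon$ after an integration by parts in time. The key pointwise observation is then
\[
(w-w_\varepsilon)\,\partial_t J_{\delta+1}(w_\varepsilon)=-\varepsilon\,\partial_tw_\varepsilon\,\partial_tJ_{\delta+1}(w_\varepsilon)=-\varepsilon\,\delta|w_\varepsilon|^{\delta-1}(\partial_tw_\varepsilon)^2\le0 .
\]
This makes the error term signed. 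The remaining part is $\int w_\varepsilon\partial_tJ_{\delta+1}(w_\varepsilon)\eta^p\xi=\frac{\delta}{\delta+1}\int\partial_t|w_\varepsilon|^{\delta+1}\eta^p\xi$. Integrating by parts in $t$ yields the $\xi'$-term and the endpoint term. Because $\xi(T_0)=0$, no contribution appears at $T_0$.

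To get the term at $T_0+S$, I would first run the argument on $(T_0,\tau)$ for $\tau\in[T_0+S,T_1]$, where $\xi=1$ near the endpoint, and drop the nonnegative remainders. Then I let $\varepsilon\to0$ using $w_\varepsilon\to w$ in $C(J;L^2)$ together with the local $L^\infty$ bound, and take $\tau=T_0+S$. Some care is needed with the leftover boundary terms at $T_1$ (equivalently $\tau$) coming from $\int u\varphi$. By choosing $\tau$ as the upper endpoint of the interval in \eqref{1.2}, these combine with the integrated-by-parts term into $\frac{\delta}{\delta+1}$ times boundary values. These are nonnegative or cancel, and adjusting the constant $\frac{1}{\delta+1}$ versus $\frac{\delta}{\delta+1}$ only weakens the inequality in the stated direction. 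Finally, the weight $\eta$ versus $\eta^p$ in the time terms is harmless as written, since $0\le\eta\le1$ can be assumed.
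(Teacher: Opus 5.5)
Your core argument coincides with the paper's. You use the same test function $J_{\delta+1}\bigl((\tau_hu)_\varepsilon\bigr)\eta^p\xi$ in the difference of the shifted and unshifted formulations, Lemma~\ref{lem2} for both nonlocal terms, and the sign of $(w-w_\varepsilon)\,\partial_tJ_{\delta+1}(w_\varepsilon)$ (the paper's $M_2$) to discard the mollification error.

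The step that fails is the one meant to produce the term at $T_0+S$. Running the argument on $(T_0,\tau)$ and setting $\tau=T_0+S$ leaves every spatial integral on $(T_0,T_0+S)$. To reach the left-hand side of \eqref{3.0} you would have to add $\int_{T_0+S}^{T_1}$ of the two spatial integrands, and these are not sign-definite. In the splitting of Proposition~\ref{pro4} only $I_{1,1}$ is nonnegative; $I_{1,2}$, $I_2$, $P_2$ and the cutoff error inside $P_1$ have no sign. So there is nothing to ``drop''. In fact, for a solution that is smooth in time your computation needs no mollification and becomes an identity with $\frac{1}{\delta+1}E(T_1)$ on the left, where $E(t):=\int_{B_R(x_0)}|\tau_hu|^{\delta+1}(x,t)\eta^p(x)\,\mathrm{d}x$. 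Hence the inequality as printed would be equivalent to $E(T_0+S)\le E(T_1)$, which fails as soon as $E$ decreases between $T_0+S$ and $T_1$. What the argument proves is \eqref{3.0} with $\frac{1}{\delta+1}E(T_1)$ in place of the $T_0+S$ term, and this is also what the paper's proof produces, since all its boundary terms sit at $T_1$. Because $T_1$ is arbitrary, this controls the value at any terminal time, which is how Corollary~\ref{cor22} uses it; Proposition~\ref{pro4} discards the term anyway.

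The surrounding bookkeeping should be corrected the same way rather than forced to match the printed statement. No constants need adjusting. The terminal term from \eqref{1.2} gives $E(T_1)$ against $\frac{\delta}{\delta+1}E(T_1)$ from the integration by parts. The product-rule term $\iint w\,J_{\delta+1}(w_\varepsilon)\eta^p\xi'$ gives $\iint|w|^{\delta+1}\eta^p\xi'$ against $\frac{\delta}{\delta+1}$ of the same. So both coefficients are exactly $\frac{1}{\delta+1}$. Your claim that adjusting constants only weakens the inequality is also wrong in general: lowering the coefficient of the $\xi'$-term on the right is a strengthening when $\xi'\ge0$. Replacing $\eta^p$ by $\eta$ in the boundary term on the left is not harmless either, since $\eta^p\le\eta$ makes it a strengthening. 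Finally, the factor produced by the subtraction is $Z_h-Z$, which is also what the nonnegativity of $I_{1,1}$ requires. Multiplying by $-1$ flips every term, including the $J_p$-difference, so it cannot produce $Z-Z_h$ next to $J_p(u_h(x,t)-u_h(y,t))-J_p(u(x,t)-u(y,t))$. The factor $Z-Z_h$, the weight $\eta$ and the time $T_0+S$ in the statement are misprints.
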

\begin{proof}
	Since $(h,h)\in B_{d}\times B_{d}$, by performing a transformation on the weak formulation \eqref{1.2} with replacing test function $\varphi(x,t)$ by $\varphi_{-h}(x,t):=\varphi(x-h,t)$, we obtain
	\begin{align}
		\label{3.1}
		0&=-\int_{T_0}^{T_1}\int_\Omega u_h(x,t)\partial_t\varphi(x,t)\,\mathrm{d}x\mathrm{d}t+\int_{\Omega}u_h(x,T_1)\varphi(x,T_1)\,\mathrm{d}x\nonumber\\
		&\quad+\int_{T_0}^{T_1}\int_{\mathbb{R}^N}\int_{\mathbb{R}^N}Z_h\frac{\varphi(x,t)-\varphi(y,t)}{|x-y|^{N+s_1}}\,\mathrm{d}x\mathrm{d}y\mathrm{d}t\nonumber\\
		&\quad+\int_{T_0}^{T_1}\int_{\mathbb{R}^N}\int_{\mathbb{R}^N}\frac{J_p\left(u_h(x,t)-u_h(y,t)\right)\left(\varphi(x,t)-\varphi(y,t)\right)}{|x-y|^{N+s_pp}}\,\mathrm{d}x\mathrm{d}y\mathrm{d}t.
	\end{align}
	Subtracting \eqref{1.2} from \eqref{3.1}, we have
	\begin{align}
		\label{3.2}
		&\quad\int_{T_0}^{T_1}\int_{\mathbb{R}^N}\int_{\mathbb{R}^N}\frac{\left[J_p\left(u_h(x,t)-u_h(y,t)\right)-J_p\left(u(x,t)-u(y,t)\right)\right]\left(\varphi(x,t)-\varphi(y,t)\right)}{|x-y|^{N+s_pp}}\,\mathrm{d}x\mathrm{d}y\mathrm{d}t\nonumber\\
		&\quad+\int_{T_0}^{T_1}\int_{\mathbb{R}^N}\int_{\mathbb{R}^N}(Z_h-Z)\frac{\varphi(x,t)-\varphi(y,t)}{|x-y|^{N+s_1}}\,\mathrm{d}x\mathrm{d}y\mathrm{d}t+\int_{\Omega}\left(\tau_hu\right)(x,T_1)\varphi(x,T_1)\,\mathrm{d}x\nonumber\\
		&=\int_{T_0}^{T_1}\int_{\Omega}(\tau_hu)\partial_t\varphi\,\mathrm{d}x\mathrm{d}t.
	\end{align}
	We set $\varphi(x,t):=J_{\delta+1}\left((\tau_hu)_{\varepsilon}\right)(x,t)\eta^p(x)\xi(t)$ in \eqref{3.2} to get
	\begin{align}
		\label{3.3}
		&\quad\int_{T_0}^{T_1}\int_{\mathbb{R}^N}\int_{\mathbb{R}^N}(Z_h-Z)\frac{J_{\delta+1}\left((\tau_hu)_{\varepsilon}\right)(x,t)\eta^p(x)\xi(t)-J_{\delta+1}\left((\tau_hu)_{\varepsilon}\right)(y,t)\eta^p(y)\xi(t)}{|x-y|^{N+s_1}}\,\mathrm{d}x\mathrm{d}y\mathrm{d}t\nonumber\\
		&\quad+\int_{T_0}^{T_1}\int_{\mathbb{R}^N}\int_{\mathbb{R}^N}\frac{J_p\left(u_h(x,t)-u_h(y,t)\right)-J_p\left(u(x,t)-u(y,t)\right)}{|x-y|^{N+s_pp}}\nonumber\\
		&\qquad\qquad\qquad\qquad\times\left[J_{\delta+1}\left((\tau_hu)_{\varepsilon}\right)(x,t)\eta^p(x)\xi(t)-J_{\delta+1}\left((\tau_hu)_{\varepsilon}\right)(y,t)\eta^p(y)\xi(t)\right]\,\mathrm{d}x\mathrm{d}y\mathrm{d}t\nonumber\\
		&\quad+\int_{\Omega}\left(\tau_hu\right)(x,T_1)J_{\delta+1}\left((\tau_hu)_{\varepsilon}\right)(x,T_1)\eta^p(x)\,\mathrm{d}x\nonumber\\
		&=\int_{T_0}^{T_1}\int_{\Omega}(\tau_hu)\partial_t\left[J_{\delta+1}\left((\tau_hu)_{\varepsilon}\right)(x,t)\eta^p(x)\xi(t)\right]\,\mathrm{d}x\mathrm{d}t. 
	\end{align}
	
	By letting $\varepsilon\rightarrow0^+$ and utilizing Lemma \ref{lem2}, the first two terms of \eqref{3.3} become
	\begin{align}
		\label{3.31}
		\int_{T_0}^{T_1}\int_{\mathbb{R}^N}\int_{\mathbb{R}^N}(Z_h-Z)\frac{J_{\delta+1}\left((\tau_hu)\right)(x,t)\eta^p(x)\xi(t)-J_{\delta+1}\left((\tau_hu)\right)(y,t)\eta^p(y)\xi(t)}{|x-y|^{N+s_1}}\,\mathrm{d}x\mathrm{d}y\mathrm{d}t
	\end{align}
	and
	\begin{align}
		\label{3.32}
		&\int_{T_0}^{T_1}\int_{\mathbb{R}^N}\int_{\mathbb{R}^N}\frac{J_p\left(u_h(x,t)-u_h(y,t)\right)-J_p\left(u(x,t)-u(y,t)\right)}{|x-y|^{N+s_pp}}\nonumber\\
		&\qquad\qquad\qquad\quad\times\left[J_{\delta+1}\left((\tau_hu)\right)(x,t)\eta^p(x)\xi(t)-J_{\delta+1}\left((\tau_hu)\right)(y,t)\eta^p(y)\xi(t)\right]\,\mathrm{d}x\mathrm{d}y\mathrm{d}t.
	\end{align}
	
	For the term
	\begin{align*}
		\int_{\Omega}\left(\tau_hu\right)(x,T_1)J_{\delta+1}\left((\tau_hu)_{\varepsilon}\right)(x,T_1)\eta^p(x)\,\mathrm{d}x,
	\end{align*}
	since $u\in C_{\mathrm{loc}}\left(I;L^2(\Omega)\right)\cap L_\mathrm{loc}^\infty\left(\Omega_I\right)$, one has
	\begin{align*}
		&\quad\lim\limits_{\varepsilon\rightarrow0}\left| \int_{\Omega}(\tau_hu)(x,T_1)\eta^p(x)\left[J_{\delta+1}\left((\tau_hu)_{\varepsilon}\right)(x,T_1)-J_{\delta+1}\left((\tau_hu)\right)(x,T_1)\right]\,\mathrm{d}x\right|\\
		&\le C(\delta)\|\eta\|^p_{L^\infty(B_R)}\|u\|^\delta_{L^\infty\left(B_{R+d}(x_0)\times(T_0-d,T_1+d)\right)}\lim\limits_{\varepsilon\rightarrow0}\int_{\Omega}\left|(\tau_hu)_\varepsilon-\tau_hu\right|(x,T_1)\,\mathrm{d}x.
	\end{align*}
	Thus, we get
	\begin{align}
		\label{3.4}
		\lim\limits_{\varepsilon\rightarrow0}\int_{\Omega}(\tau_hu)(x,T_1)J_{\delta+1}\left((\tau_hu)_{\varepsilon}\right)(x,T_1)\eta^p(x)\,\mathrm{d}x=\int_{\Omega}\left|\tau_hu\right|^{\delta+1}(x,T_1)\eta^p(x)\,\mathrm{d}x.
	\end{align}
	
	Finally, we consider the term on the right-hand side of \eqref{3.3}, which can be decomposed as
	\begin{align}
		\label{3.5}
		&\quad\int_{T_0}^{T_1}\int_{\Omega}(\tau_hu)\partial_t\left[J_{\delta+1}\left((\tau_hu)_{\varepsilon}\right)(x,t)\eta^p(x)\xi(t)\right]\,\mathrm{d}x\mathrm{d}t\nonumber\\
		&=\int_{T_0}^{T_1}\int_{\Omega}(\tau_hu)_\varepsilon\partial_t\left[J_{\delta+1}\left((\tau_hu)_{\varepsilon}\right)(x,t)\eta^p(x)\xi(t)\right]\,\mathrm{d}x\mathrm{d}t\nonumber\\
		&\quad+\int_{T_0}^{T_1}\int_{\Omega}\left[ (\tau_hu)-(\tau_hu)_\varepsilon\right] \partial_t\left[J_{\delta+1}\left((\tau_hu)_{\varepsilon}\right)(x,t)\eta^p(x)\xi(t)\right]\,\mathrm{d}x\mathrm{d}t\nonumber\\
		&=:M_1+M_2.
	\end{align}
	To estimate $M_1$, by integration by parts, we observe that
	\begin{align*}
		M_1&=-\int_{T_0}^{T_1}\int_{\Omega}\partial_t[(\tau_hu)_\varepsilon]\left[J_{\delta+1}\left((\tau_hu)_{\varepsilon}\right)(x,t)\eta^p(x)\xi(t)\right]\,\mathrm{d}x\mathrm{d}t\\
		&\quad+\int_{\Omega}\left|(\tau_hu)_\varepsilon\right|^{\delta+1}(x,T_1)\eta^p(x)\,\mathrm{d}x\\
		&=-\int_{T_0}^{T_1}\int_{\Omega}\partial_t\left[\frac{|(\tau_hu)_\varepsilon|^{\delta+1}}{\delta+1}\right]\eta^p(x)\xi(t)\,\mathrm{d}x\mathrm{d}t+\int_{\Omega}\left|(\tau_hu)_\varepsilon\right|^{\delta+1}(x,T_1)\eta^p(x)\,\mathrm{d}x\\
		&=\int_{T_0}^{T_1}\int_{\Omega}\frac{|(\tau_hu)_\varepsilon|^{\delta+1}}{\delta+1}\eta^p(x)\xi'(t)\,\mathrm{d}x\mathrm{d}t-\int_{\Omega}\frac{|(\tau_hu)_\varepsilon|^{\delta+1}}{\delta+1}\eta^p(x)\,\mathrm{d}x\\
		&\quad+\int_{\Omega}\left|(\tau_hu)_\varepsilon\right|^{\delta+1}(x,T_1)\eta^p(x)\,\mathrm{d}x.
	\end{align*}
	Using similar method of showing \eqref{3.4}, we have
	\begin{align}
		\label{3.6}
		\lim\limits_{\varepsilon\rightarrow0}M_1&=\int_{T_0}^{T_1}\int_{\Omega}\frac{|\tau_hu|^{\delta+1}}{\delta+1}\eta^p(x)\xi'(t)\,\mathrm{d}x\mathrm{d}t-\int_{\Omega}\frac{|\tau_hu|^{\delta+1}}{\delta+1}\eta^p(x)\,\mathrm{d}x\nonumber\\
		&\quad+\int_{\Omega}\left|\tau_hu\right|^{\delta+1}(x,T_1)\eta^p(x)\,\mathrm{d}x.
	\end{align}
	Note that
	\begin{align*}
		\partial_t\left[ (\tau_hu)_{\varepsilon}\right]=\frac{1}{h}\left[(\tau_hu)_\varepsilon-(\tau_hu)\right],  
	\end{align*}
	one can estimate $M_2$ to get
	\begin{align*}
		M_2&=\int_{T_0}^{T_1}\int_{\Omega}\left[(\tau_hu)-(\tau_hu)_\varepsilon\right]J_{\delta+1}\left((\tau_hu)_\varepsilon\right)\eta^p(x)\xi'(t)\,\mathrm{d}x\mathrm{d}t\\
		&\quad+\frac{\delta}{\varepsilon}\int_{T_0}^{T_1}\int_{\Omega}\left[(\tau_hu)-(\tau_hu)_\varepsilon\right]\left|(\tau_hu)_\varepsilon\right|^\delta\left[(\tau_hu)_\varepsilon-(\tau_hu)\right]\eta^p(x)\xi(t)\,\mathrm{d}x\mathrm{d}t\\
		&\le \int_{T_0}^{T_1}\int_{\Omega}\left[(\tau_hu)-(\tau_hu)_\varepsilon\right]J_{\delta+1}\left((\tau_hu)_\varepsilon\right)\eta^p(x)\xi'(t)\,\mathrm{d}x\mathrm{d}t\\
		&\le \frac{C(\delta)}{S}\|\eta\|^p_{L^\infty\left(B_{R+d}(x_0)\times(T_0-d,T_1+d)\right)}\int_{T_0}^{T_1}\int_{\Omega}\left|\tau_hu-(\tau_hu)_\varepsilon\right|(x,T_1)\,\mathrm{d}x\mathrm{d}t.
	\end{align*}
	The above estimate enables us to deduce
	\begin{align}
		\label{3.7}
		\lim\limits_{\varepsilon\rightarrow0}M_2\le0.
	\end{align}
	
Letting $\varepsilon$ go to $0$ in \eqref{3.3} and combining \eqref{3.31}--\eqref{3.7}, we deduce the desired inequality.
\end{proof}

\section{Sobolev regularity  for the super-quadratic case}
\label{sec3} 
In this section, we study the Sobolev regularity of weak solutions to problem \eqref{1.1} in the super-quadratic case and derive the conditions required for the difference quotient technique to improve weak differentiability. 

A key feature of this approach is that, in the proposition below, every term on the right-hand side of inequality \eqref{4.0} can be traced back to a specific component of the equation. This enables us to quantify and distinguish the contributions of different structural terms in \eqref{1.1} to the overall estimates. In particular, we observe that the $1$-structure acts as a nonlinear switch in the equation, whose influence on the regularity exponents is independent of the parameter $s_1$.

Set
\begin{align*}
	\mu^{s_p}_p:=
	\begin{cases}
		\frac{s_pp-2}{p-2}\quad &\text{if }p>2,\\[2mm]
		0\quad &\text{if }p=2.
	\end{cases}
\end{align*}
We begin with a basic estimate for the difference quotients of weak solutions.

\begin{proposition}
	\label{pro4}
	Let $p\ge2$, $s_1,s_p\in (0,1)$, $q\ge p$ and $\sigma\in\left( \max\left\lbrace 0,\mu^{s_p}_p\right\rbrace, \min\left\lbrace 1,\frac{s_pp}{p-1}\right\rbrace  \right) $. There exists a constant $C=C(N,p,q,s_1,s_p,\sigma)$ such that for any $u\in L^q_{\mathrm{loc}}\bigl( I;W^{\sigma,q}_{\mathrm{loc}}(\Omega)\bigl)$ being a locally bounded weak solution to problem \eqref{1.1} in the sense of Definition \ref{def1}, we have, for any $r\in(0,R)$ with $R\in(0,1)$, $d\in \left( 0,\frac{1}{4}(R-r)\right]$, $h\in B_d\backslash\left\lbrace0\right\rbrace$,  $B_{R+d}(x_0)\times(T_0,T_1)\subset\subset\Omega\times I$ and $S\in (0,\min\left\lbrace T_1-T_0,1\right\rbrace )$,
	\begin{align}
		\label{4.0}
		&\quad\int_{T_0+S}^{T_1}\int_{B_r(x_0)}\int_{B_r(x_0)}\frac{\left| J_{\frac{q}{p}+1}(\tau_hu)(x,t)-J_{\frac{q}{p}+1}(\tau_hu)(y,t)\right|^p}{|x-y|^{N+s_pp}}\,\mathrm{d}x\mathrm{d}y\mathrm{d}t\nonumber\\
		&\le \frac{CR^{1-s_1}}{R-r}\int_{T_0}^{T_1}\int_{B_R(x_0)}\left|\tau_hu\right|^{q-p+1}\,\mathrm{d}x\mathrm{d}t+\frac{C}{(R-r)^{s_1}}\int_{T_0}^{T_1}\int_{B_R(x_0)}\left|\tau_hu\right|^{q-p+1}\,\mathrm{d}x\mathrm{d}t\nonumber\\
		&\quad+\frac{CR^{(N-\varepsilon)\frac{q-p+2}{q}}}{(R-r)^2}\left( \int_{T_0}^{T_1}[u(\cdot,t)]^{q}_{W^{\sigma,q}(B_R(x_0))}\,\mathrm{d}t\right)^\frac{p-2}{p} \left(\int_{T_0}^{T_1}\int_{B_{R}(x_0)}\left|\tau_hu\right|^{q}\,\mathrm{d}x\mathrm{d}t\right)^{\frac{q-p+2}{q}}\nonumber\\
		&\quad+\frac{C}{S}\int_{T_0}^{T_1}\int_{B_R(x_0)}\left|\tau_hu\right|^{q-p+2}\,\mathrm{d}x\mathrm{d}t+\frac{C\mathcal{T}_{R+d}^{p-2}}{(R-r)^{N+s_pp}}\int_{T_0}^{T_1}\int_{B_R(x_0)}\left|\tau_hu\right|^{q-p+2}\,\mathrm{d}x\mathrm{d}t\nonumber\\
		&\quad+\frac{C\mathcal{T}_{R+d}^{p-1}|h|}{(R-r)^{N+s_pp+1}}\int_{T_0}^{T_1}\int_{B_R(x_0)}\left|\tau_hu\right|^{q-p+1}\,\mathrm{d}x\mathrm{d}t,
	\end{align}
	where
	\begin{align*}
		\varepsilon:=\left[N+s_pp-2-\frac{(N+\sigma q)(p-2)}{q}\right]\frac{q}{q-p+2}
	\end{align*}
	and
	\begin{align*}
		\mathcal{T}_{R+d}:=\|u\|_{L^\infty\left(B_{B+d}(x_0)\times(T_0,T_1)\right)}+\operatorname*{ess\,sup}_{t\in(T_0,T_1)}\mathrm{Tail}(u;x_0,R+d,t). 
	\end{align*}
\end{proposition}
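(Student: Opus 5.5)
We start from Lemma \ref{lem3} with $\delta:=q-p+1\ge1$ and a spatial cutoff $\eta\in C^1_0(B_{(R+r)/2}(x_0))$ with $\eta\equiv1$ on $B_r(x_0)$ and $|\nabla\eta|\le C/(R-r)$. (The lemma has $\eta$ in the energy term, but replacing it by $\eta^p$ is harmless.) For the time cutoff $\xi$ we take $\xi(T_0)=0$ and $\xi\equiv1$ on $[T_0+S,T_1]$.

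Inequality \eqref{3.0} then has three pieces. The parabolic term on its right-hand side directly gives the contribution $\frac{C}{S}\int\!\!\int_{B_R}|\tau_hu|^{q-p+2}$. The energy term at $T_0+S$ on its left is nonnegative and is simply dropped. What remains is to bound the $1$-structure term and the $p$-structure term.

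\textbf{The $p$-term: local part.} We split $\mathbb{R}^N\times\mathbb{R}^N$ into $B_R\times B_R$ and the remainder. On $B_R\times B_R$ we apply the super-quadratic analogue of Lemma \ref{lem16} with
\begin{align*}
a=u_h(x),\quad b=u_h(y),\quad c=u(x),\quad d=u(y),\quad e=\eta(x),\quad f=\eta(y).
\end{align*}
This is the standard inequality from \cite{BLS18,BLS21}, valid for $p\ge2$. It produces a coercive term whose kernel is
\begin{align*}
(|a-b|+|c-d|)^{p-2}\,(|\tau_hu(x)|+|\tau_hu(y)|)^{q-p}\,|\tau_hu(x)-\tau_hu(y)|^2\,(\eta^p(x)+\eta^p(y)).
\end{align*}
Using $p\ge2$, the elementary inequality $|J_{q/p+1}(A)-J_{q/p+1}(B)|^p\le C(|A|+|B|)^{q-p}|A-B|^p$ (from Lemma \ref{lem9}), and Young's inequality, this term controls the left-hand side of \eqref{4.0} on $B_r$.

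The cutoff error has kernel
\begin{align*}
(|a-b|+|c-d|)^{p-2}\,(|\tau_hu(x)|+|\tau_hu(y)|)^{q}\,|\eta(x)-\eta(y)|^2 .
\end{align*}
We estimate $|\eta(x)-\eta(y)|^2\le C|x-y|^2/(R-r)^2$ and apply Hölder's inequality with exponents $\frac{q}{p-2}$ and $\frac{q}{q-p+2}$. The factor $|u(x)-u(y)|^{p-2}$ is absorbed into $[u]_{W^{\sigma,q}}^{p-2}$, and the factor $|\tau_hu|^{q-p+2}$ into $\|\tau_hu\|_{L^q}^{q-p+2}$. The leftover power of $|x-y|$ is integrated over $B_R$, and this integrability is exactly what requires $\sigma>\mu_p^{s_p}$. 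Bookkeeping the exponents yields the $R^{(N-\varepsilon)\frac{q-p+2}{q}}$ term in \eqref{4.0}.

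\textbf{The $p$-term: nonlocal part.} The remaining region involves $x\in\operatorname{supp}\eta$ and $y\notin B_R$, where $|x-y|\ge c(R-r)$. There we bound
\begin{align*}
|J_p(u_h(x)-u_h(y))-J_p(u(x)-u(y))|
\end{align*}
in two ways. First, by Lemma \ref{lem9} it is at most $C(|u|+|u_h|+|u(y)|+|u_h(y)|)^{p-2}|\tau_hu|$. This gives the term $\mathcal T_{R+d}^{p-2}(R-r)^{-N-s_pp}\int|\tau_hu|^{q-p+2}$, using Lemma \ref{lem8} for the tail.

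Second, the shift of the $y$-variable is handled by moving $h$ onto the kernel. Writing the $u_h(y)$ contribution as an integral against $|x-y+h|^{-N-s_pp}$ and comparing with $|x-y|^{-N-s_pp}$, the difference of kernels is bounded by $C|h|\,|x-y|^{-N-s_pp-1}$. This produces the last term in \eqref{4.0}, with $\mathcal{T}^{p-1}_{R+d}|h|$ against $\int|\tau_hu|^{q-p+1}$.

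\textbf{The $1$-term.} Since $|Z-Z_h|\le2$, we only need to bound
\begin{align*}
\iint\frac{|J_{\delta+1}(\tau_hu)\eta^p(x)-J_{\delta+1}(\tau_hu)\eta^p(y)|}{|x-y|^{N+s_1}}.
\end{align*}
On the local part we split the difference in the same symmetric way as in the proof of Lemma \ref{lem2}. The piece carrying the difference of $\eta^p$ gives $\frac{R^{1-s_1}}{R-r}\int|\tau_hu|^{q-p+1}$, via $|\eta^p(x)-\eta^p(y)|\le C|x-y|/(R-r)$.

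The piece carrying the difference of $J_{\delta+1}(\tau_hu)$ is the delicate one, since it is not coercive. We plan to handle it by the pairing structure instead. Because $Z\in\operatorname{sgn}(u(x)-u(y))$, the product
\begin{align*}
(Z-Z_h)\bigl(\tau_hu(x)-\tau_hu(y)\bigr)
\end{align*}
is nonpositive, by the monotonicity of $\operatorname{sgn}$. Since $J_{\delta+1}$ is monotone, this sign is preserved after weighting by the symmetric factor $\frac{\eta^p(x)+\eta^p(y)}{2}$, so the piece has a good sign and can be discarded.

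The nonlocal part of the $1$-term is bounded by $(R-r)^{-s_1}\int|\tau_hu|^{q-p+1}$.

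\textbf{Expected obstacle.} The main obstacle is the exponent bookkeeping in the Hölder step of the local $p$-term: showing that the $|x-y|$ power is integrable precisely when $\sigma>\mu^{s_p}_p$, and matching it to $\varepsilon$.
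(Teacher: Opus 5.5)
Your plan follows the paper's proof, with the steps spelled out. You start from \eqref{3.0} with $\delta=q-p+1$ and split the $1$-term and the $p$-term into their $B_R\times B_R$ and off-diagonal parts. You drop the symmetric local $1$-piece by monotonicity of $\operatorname{sgn}$, and bound the $\eta$-difference and off-diagonal $1$-pieces by $R^{1-s_1}/(R-r)$ and $(R-r)^{-s_1}$. For the local $p$-term you use the $p\ge2$ algebraic inequality plus H\"older, where $\sigma>\mu^{s_p}_p$ is exactly the condition $\varepsilon<N$. For the nonlocal $p$-term you move $h$ onto the kernel. The paper simply imports these bounds, \eqref{4.1}--\eqref{4.6}, from the authors' earlier elliptic work.

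\textbf{Sign of the symmetric local $1$-piece.} As you wrote it, this step has the wrong sign. You show $(Z-Z_h)\bigl(\tau_hu(x,t)-\tau_hu(y,t)\bigr)\le0$. If the left side of \eqref{3.0} really carried the factor $Z-Z_h$, that piece would be nonpositive on the left of an upper bound, so it could not be discarded. The step works only because $Z-Z_h$ in the statement of Lemma \ref{lem3} is a misprint. Subtracting \eqref{1.2} from \eqref{3.1} gives $Z_h-Z$ (see \eqref{3.2}, and compare the order $J_p(u_h(x,t)-u_h(y,t))-J_p(u(x,t)-u(y,t))$ in the $p$-term). Monotonicity of $\operatorname{sgn}$ then gives $(Z_h-Z)\bigl(J_{\delta+1}(\tau_hu)(x,t)-J_{\delta+1}(\tau_hu)(y,t)\bigr)\ge0$, which is the paper's \eqref{4.1}. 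State it in that form.

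\textbf{Cutoff-error kernel.} This kernel carries $(|\tau_hu(x)|+|\tau_hu(y)|)^{q-p+2}$, not the power $q$. Your H\"older step with exponents $\frac{q}{p-2},\frac{q}{q-p+2}$ only works with $q-p+2$. It then yields the power $\frac{p-2}{q}$ on $\int[u]^q_{W^{\sigma,q}}\,\mathrm{d}t$, which is what degree-$q$ homogeneity of that bound forces. The $\frac{p-2}{p}$ printed in \eqref{4.0} and \eqref{4.5} appears to be a misprint, so do not try to match it.

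\textbf{Nonlocal $p$-term.} Your ``two ways'' must be the two pieces of the splitting obtained by adding and subtracting $J_p(u(x,t)-u_h(y,t))$. Lemma \ref{lem9} applied to the full difference produces $|\tau_hu(x,t)-\tau_hu(y,t)|$ with $y\notin B_R$. Bounding $|\tau_hu(y,t)|$ by the tail then loses the factor $|h|$ in the last term of \eqref{4.0}. In the translated piece you also have to handle $B_R(x_0+h)$ versus $B_R(x_0)$. That is where $\|u\|_{L^\infty(B_{R+d})}$ in $\mathcal{T}_{R+d}$ is needed.
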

\begin{proof}
	We begin our proof from the inequality \eqref{3.0} with $\delta:=q-p+1$. Let $\eta$ be a cut-off function that satisfies $\eta\equiv1$ in $B_r(x_0)$, $\eta\in C^1_0\bigl(B_{\frac{1}{2}(R+r)}(x_0)\bigl)$ and $|\nabla\eta|\le \frac{C}{R-r}$. To deal with the $1$-growth and $p$-growth terms respectively, we decompose the two structures into local terms and nonlocal terms:
	\begin{align*}
		&\quad\int_{T_0}^{T_1}\int_{\mathbb{R}^N}\int_{\mathbb{R}^N}(Z_h-Z)\frac{J_{q-p+2}(\tau_hu)(x,t)\eta^p(x)\xi(t)-J_{q-p+2}(\tau_hu)(y,t)\eta^p(y)\xi(t)}{|x-y|^{N+s_1}}\,\mathrm{d}x\mathrm{d}y\mathrm{d}t\\
		&=\int_{T_0}^{T_1}\int_{B_R(x_0)}\int_{B_R(x_0)}\frac{J_{q-p+2}(\tau_hu)(x,t)\eta^p(x)\xi(t)-J_{q-p+2}(\tau_hu)(y,t)\eta^p(y)\xi(t)}{|x-y|^{N+s_1}}\nonumber\\
		&\qquad\qquad\qquad\qquad\qquad\times(Z_h-Z)\,\mathrm{d}x\mathrm{d}y\mathrm{d}t\\
		&\quad+2\int_{T_0}^{T_1}\int_{B_{\frac{1}{2}(R+r)}(x_0)}\int_{\mathbb{R}^N\backslash B_{R}(x_0)}(Z_h-Z)\frac{J_{q-p+2}(\tau_hu)(x,t)\eta^p(x)\xi(t)}{|x-y|^{N+s_1}}\,\mathrm{d}x\mathrm{d}y\mathrm{d}t\\
		&=:I_1+2I_2
	\end{align*}
	and
	\begin{align}
		\label{4.01}
		&\quad\int_{T_0}^{T_1}\int_{\mathbb{R}^N}\int_{\mathbb{R}^N}\frac{\left[ J_{p}\left(u_h(x,t)-u_h(y,t)\right)-J_{p}\left(u(x,t)-u(y,t)\right)\right] }{|x-y|^{N+s_pp}}\nonumber\\
		&\qquad\qquad\qquad\quad\times\left[ J_{q-p+2}(\tau_hu)(x,t)\eta^p(x)\xi(t)-J_{q-p+2}(\tau_hu)(y,t)\eta^p(y)\xi(t)\right]\mathrm{d}x\mathrm{d}y\mathrm{d}t\nonumber\\
		&=\int_{T_0}^{T_1}\int_{B_R(x_0)}\int_{B_R(x_0)}\frac{\left[ J_{p}\left(u_h(x,t)-u_h(y,t)\right)-J_{p}\left(u(x,t)-u(y,t)\right)\right] }{|x-y|^{N+s_pp}}\nonumber\\
		&\qquad\qquad\qquad\qquad\qquad\times\left[ J_{q-p+2}(\tau_hu)(x,t)\eta^p(x)\xi(t)-J_{q-p+2}(\tau_hu)(y,t)\eta^p(y)\xi(t)\right]\mathrm{d}x\mathrm{d}y\mathrm{d}t\nonumber\\
		&\quad+2\int_{T_0}^{T_1}\int_{B_{\frac{1}{2}(R+r)}(x_0)}\int_{\mathbb{R}^N\backslash B_{R}(x_0)}\frac{\left[ J_{p}\left(u_h(x,t)-u_h(y,t)\right)-J_{p}\left(u(x,t)-u(y,t)\right)\right] }{|x-y|^{N+s_pp}}\nonumber\\
		&\qquad\qquad\qquad\qquad\qquad\qquad\qquad\quad\ \times J_{q-p+2}(\tau_hu)(x,t)\eta^p(x)\xi(t)\mathrm{d}x\mathrm{d}y\mathrm{d}t\nonumber\\
		&=:P_1+2P_2.
	\end{align}
	
	Following the approach devised in \cite[Proof of Proposition 4.2]{LZ25}, and upon integrating identities (4.5)–(4.9) from \cite{LZ25} over the time interval $(T_0, T_1)$, we derive the following estimates:
	
	\begin{align}
		\label{4.1}
		I_{1,1}&:=\int_{T_0}^{T_1}\int_{B_R(x_0)}\int_{B_R(x_0)}(Z_h-Z)
		\frac{J_{q-p+2}(\tau_hu)(x,t)-J_{q-p+2}(\tau_hu)(y,t)}{|x-y|^{N+s_1}} \nonumber\\
		&\qquad\qquad\qquad\qquad\qquad\times 
		\frac{\eta^p(x)\xi(t)+\eta^p(y)\xi(t)}{2}
		\,\mathrm{d}x\mathrm{d}y\mathrm{d}t\ge0,
		\\[6pt]
		\label{4.2}
		I_{1,2}&:=\int_{T_0}^{T_1}\int_{B_R(x_0)}\int_{B_R(x_0)}(Z_h-Z)
		\frac{\eta^p(x)\xi(t)-\eta^p(y)\xi(t)}{|x-y|^{N+s_1}} \nonumber\\
		&\qquad\qquad\qquad\qquad\qquad\times
		\frac{J_{q-p+2}(\tau_hu)(x,t)+J_{q-p+2}(\tau_hu)(y,t)}{2}
		\,\mathrm{d}x\mathrm{d}y\mathrm{d}t\nonumber\\
		&\le
		\frac{C(N,p)}{(1-s_1)(R-r)}
		\int_{T_0}^{T_1}\int_{B_R(x_0)}
		|\tau_hu|^{q-p+1}\,\mathrm{d}x\mathrm{d}t,
		\\[6pt]
		\label{4.3}
		I_2
		&\le
		\frac{C(N)}{s_1(R-r)^{s_1}}
		\int_{T_0}^{T_1}\int_{B_{\frac12(R+r)}(x_0)}
		|\tau_hu|^{q-p+1}\,\mathrm{d}x\mathrm{d}t,
		\\[6pt]
		\label{4.4}
		P_1
		&\ge
		\int_{T_0}^{T_1}\int_{B_R(x_0)}\int_{B_R(x_0)}
		\frac{|J_{\frac{q}{p}+1}(\tau_hu)(x,t)-J_{\frac{q}{p}+1}(\tau_hu)(y,t)|^p
			(\eta^p(x)+\eta^p(y))\xi(t)}
		{|x-y|^{N+s_pp}}
		\,\mathrm{d}x\mathrm{d}y\mathrm{d}t
		\nonumber\\
		&\quad-
		C\int_{T_0}^{T_1}\int_{B_R(x_0)}\int_{B_R(x_0)}
		\frac{(|\tau_hu(x,t)|+|\tau_hu(y,t)|)^{q-p+2}
			|\eta^{\frac p2}(x)-\eta^{\frac p2}(y)|^2\xi(t)}
		{|x-y|^{N+s_pp}}
		\nonumber\\
		&\qquad\qquad\qquad\qquad\qquad\qquad\times
		\left(|u_h(x,t)-u_h(y,t)|+|u(x,t)-u(y,t)|^{p-2}\right)
		\,\mathrm{d}x\mathrm{d}y\mathrm{d}t,
		\\[6pt]
		\label{4.5}
		P_{1,2}
		&:=
		\int_{T_0}^{T_1}\int_{B_R(x_0)}\int_{B_R(x_0)}
		\frac{(|\tau_hu(x,t)|+|\tau_hu(y,t)|)^{q-p+2}
			|\eta^{\frac p2}(x)-\eta^{\frac p2}(y)|^2\xi(t)}
		{|x-y|^{N+s_pp}}
		\nonumber\\
		&\qquad\qquad\qquad\qquad\qquad\times
		\left(|u_h(x,t)-u_h(y,t)|+|u(x,t)-u(y,t)|^{p-2}\right)
		\,\mathrm{d}x\mathrm{d}y\mathrm{d}t\nonumber\\
		&\le
		\frac{C(p,q)}{(R-r)^2}
		\left(
		\int_{T_0}^{T_1}[u(t)]^{q}_{W^{\sigma,q}(B_R(x_0))}
		\,\mathrm{d}t
		\right)^{\frac{p-2}{p}}
		\left(\int_{T_0}^{T_1}\int_{B_R(x_0)}|\tau_hu|^{q}\,\mathrm{d}x\mathrm{d}t\right)^{\frac{q-p+2}{q}},
		\\[6pt]
		\label{4.6}
		|P_2|
		&\le
		\frac{C\mathcal T^{p-2}}{R^{s_pp}}
		\left(\frac{R}{R-r}\right)^{N+s_pp}
		\int_{T_0}^{T_1}\int_{B_R(x_0)}
		|\tau_hu|^{q-p+2}\,\mathrm{d}x\mathrm{d}t
		\nonumber\\
		&\quad+
		\frac{C\mathcal T^{p-1}|h|}{(R-r)^{N+s_pp+1}}
		\left(\frac{R}{R-r}\right)^{N+s_pp+1}
		\int_{T_0}^{T_1}\int_{B_R(x_0)}
		|\tau_hu|^{q-p+1}\,\mathrm{d}x\mathrm{d}t .
	\end{align}
	
	Now, we bring the previous estimates into the inequality \eqref{3.0} to deduce
	\begin{align*}
		&\quad\int_{T_0+S}^{T_1}\int_{B_r(x_0)}\int_{B_r(x_0)}\frac{\left| J_{\frac{q}{p}+1}(\tau_hu)(x,t)-J_{\frac{q}{p}+1}(\tau_hu)(y,t)\right|^p}{|x-y|^{N+s_pp}}\,\mathrm{d}x\mathrm{d}y\mathrm{d}t\\
		&\le \frac{CR^{1-s_1}}{R-r}\int_{T_0}^{T_1}\int_{B_R(x_0)}\left|\tau_hu\right|^{q-p+1}\,\mathrm{d}x\mathrm{d}t+\frac{C}{(R-r)^{s_1}}\int_{T_0}^{T_1}\int_{B_R(x_0)}\left|\tau_hu\right|^{q-p+1}\,\mathrm{d}x\mathrm{d}t\\
		&\quad+\frac{CR^{(N-\varepsilon)\frac{q-p+2}{q}}}{(R-r)^2}\left( \int_{T_0}^{T_1}[u(\cdot,t)]^{q}_{W^{\sigma,q}(B_R(x_0))}\,\mathrm{d}t\right)^\frac{p-2}{p} \left(\int_{T_0}^{T_1}\int_{B_{R}(x_0)}\left|\tau_hu\right|^{q}\,\mathrm{d}x\mathrm{d}t\right)^{\frac{q-p+2}{q}}\\
		&\quad+\frac{C\mathcal{T}^{p-2}}{(R-r)^{N+s_pp}}\int_{T_0}^{T_1}\int_{B_R(x_0)}\left|\tau_hu\right|^{q-p+2}\,\mathrm{d}x\mathrm{d}t\\
		&\quad+\frac{C\mathcal{T}^{p-1}|h|}{(R-r)^{N+s_pp+1}}\int_{T_0}^{T_1}\int_{B_R(x_0)}\left|\tau_hu\right|^{q-p+1}\,\mathrm{d}x\mathrm{d}t\\
		&\quad+C(p,q)\int_{T_0}^{T_1}\int_{B_R(x_0)}\left|\tau_hu\right|^{q-p+2}\eta(x)\xi'(t)\,\mathrm{d}x\mathrm{d}t.
	\end{align*}
	Recalling that $|\xi'(t)|\le \frac{C}{S}$ and $|\eta|\le1$, we get the desired result.
\end{proof}

\begin{remark}
	\label{rem5}
	We temporarily neglect the term
	\begin{align*}
		\int_{B_R(x_0)}\frac{|\tau_hu|^{\delta+1}(x,T_1)}{\delta+1}\,\mathrm{d}x
	\end{align*}
	in the inequality \eqref{3.0}, as it is non-negative and will not play a role in the subsequent iteration. We note that the property $Z\in\mathrm{sgn}(u(x,t)-u(y,t))$ is used when proving the non-negativity of $I_{1,1}$.
	In addition, H\"{o}lder's inequality is applied to estimate the term $P_{1,2}$. This step requires the restriction $\sigma>\max\left\{0,\mu^{s_p}_p\right\}$. We also impose an upper bound on $\sigma$, because once $\sigma$ exceeds this range, the difference quotient technique can no longer improve the differentiability of weak solutions.
\end{remark}

In Lemmas \ref{lem6} and \ref{lem12} below, we establish the improved regularity results in three different cases.

\begin{lemma}
	\label{lem6}
	Let $p\ge2$, $s_p\in \left( 0,\frac{p-1}{p}\right] $, $q\ge p$ and suppose that $u\in L^q_{\mathrm{loc}}\bigl(I;W^{\sigma,q}_{\mathrm{loc}}(\Omega)\bigl) $ is a locally bounded weak solution to problem \eqref{1.1} in the sense of Definition \ref{def1} with
	\begin{align*}
		\sigma\in\left(\max\left\lbrace0,\mu^{s_p}_p\right\rbrace,\frac{s_pp}{p-1}\right). 
	\end{align*}
	Then, we have
	\begin{align*}
		u\in L^q_{\mathrm{loc}}\bigl(I;W^{\alpha,q}_{\mathrm{loc}}(\Omega)\bigl) 
	\end{align*}
	for any $\alpha\in(\sigma,\beta)$ with
	\begin{align*}
		\beta:=\left( 1-\frac{p-1}{q}\right)\sigma+\frac{s_pp}{q}. 
	\end{align*}
	Moreover, there exists a constant $C=C(N,p,q,s_1,s_p,\sigma,\alpha)$ such that for any ball $B_R(x_0)\times(T_0,T_1)\subset\subset\Omega\times I$ with $R\in(0,1)$, $r\in(0,R)$ and $S\in (0,\min\left\lbrace 1,T_1-T_0\right\rbrace )$,
	\begin{align*}
		\int_{T_0+S}^{T_1}[u(\cdot,t)]^q_{W^{\alpha,q}\left( B_r(x_0)\right)}\mathrm{d}t\le\frac{C\left[\left(\mathcal{T}+1\right)^q+\int_{T_0}^{T_1}[u(\cdot,t)]^q_{W^{\sigma,q}\left( B_R(x_0)\right)}\mathrm{d}t\right]}{S(R-r)^{N+2q+2}}.
	\end{align*}
\end{lemma}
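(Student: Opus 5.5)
The plan is to derive from Proposition \ref{pro4} a bound on the second-order difference $\tau_h(\tau_h u)$ in $L^q$, then convert it into a first-order difference bound through Lemma \ref{lem10}, and finally into fractional Sobolev regularity through Lemma \ref{lem11}. Fix $B_R(x_0)$, $r<R$, and set intermediate radii $r<r_1<r_2<R$ spaced by multiples of $(R-r)/4$, with $d$ comparable to $R-r$.

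\textbf{Step 1: bound the right-hand side of \eqref{4.0}.} Because $u\in L^q(W^{\sigma,q})$, Lemma \ref{lem7} gives
\[
\int_{T_0}^{T_1}\int_{B_{r_2}}|\tau_hu|^q\,\mathrm{d}x\mathrm{d}t\le C|h|^{\sigma q}K,
\qquad K:=\int_{T_0}^{T_1}[u]^q_{W^{\sigma,q}(B_R)}\,\mathrm{d}t+\frac{\|u\|^q_{L^q}}{(R-r)^q}.
\]
The lower powers $|\tau_hu|^{q-p+1}$ and $|\tau_hu|^{q-p+2}$ are handled by H\"older's inequality on a bounded set, which yields $C|h|^{\sigma(q-p+1)}K^{(q-p+1)/q}$ and $C|h|^{\sigma(q-p+2)}K^{(q-p+2)/q}$. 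The $P_{1,2}$-type term contributes $|h|^{\sigma(q-p+2)}$. The last term of \eqref{4.0} carries an extra factor $|h|$, so it contributes $|h|^{1+\sigma(q-p+1)}$. Since $|h|<1$, the worst exponent is $\sigma(q-p+1)$, coming from the $1$-Laplacian terms. Every other exponent is at least this: $\sigma(q-p+2)\ge\sigma(q-p+1)$ and $1+\sigma(q-p+1)$ are both larger. This matches the paper's remark that the $1$-structure acts as a switch independent of $s_1$.

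\textbf{Step 2: pass from $J_{q/p+1}$ to second differences.} Take $h$ in place of the increment in the $x,y$ double integral, or use Lemma \ref{lem7} applied to $w=J_{q/p+1}(\tau_hu)$ with $\gamma=s_p$. This gives
\[
\int\!\!\int_{B_{r_1}}|\tau_h J_{\frac qp+1}(\tau_hu)|^p\le C|h|^{s_pp}\bigl(\text{RHS of \eqref{4.0}}+\|\tau_hu\|_{L^q}^q\bigr).
\]
Lemma \ref{lem9} with $\gamma=q/p$ gives the pointwise inequality
\[
|\tau_h(\tau_hu)|^q\le C\,|\tau_h J_{\frac qp+1}(\tau_hu)|^p\quad\text{when }q\ge p,
\]
using $(|a|+|b|)^{q-p}|b-a|^p\ge |b-a|^q$. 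Combined with Step 1, this yields
\[
\int\!\!\int|\tau_h(\tau_hu)|^q\le M^q|h|^{s_pp+\sigma(q-p+1)},
\]
that is, a second-difference bound with exponent $\gamma q$ where $\gamma=\sigma(1-\frac{p-1}{q})+\frac{s_pp}{q}=\beta$. The hypothesis $\sigma<\frac{s_pp}{p-1}$ gives $\beta>\sigma$.

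\textbf{Step 3: conclude.} Since $s_p\le\frac{p-1}{p}$ and $\sigma<1$, we have $\beta<1$. Lemma \ref{lem10} in the case $\gamma<1$ then gives $\|\tau_hu\|^q_{L^q}\le C|h|^{\beta q}[\cdots]$, and Lemma \ref{lem11} upgrades this to $W^{\alpha,q}$ for every $\alpha<\beta$ with the claimed estimate.

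The main obstacle is the bookkeeping of constants. Powers of $K$ below $1$ must be turned into $(\mathcal T+1)^q+\int[u]^q$ via Young's inequality, and powers of $\mathcal T$ such as $\mathcal T^{p-2}$ and $\mathcal T^{p-1}$ must be absorbed into $(\mathcal T+1)^q$; both steps use $q\ge p$. Lemma \ref{lem8} is needed to replace $\mathcal T_{R+d}$ by $\mathcal T$. The factors $1/S$ and $(R-r)^{-N-s_pp-1}$, together with the $(R-r)^{-q}$ losses from Lemmas \ref{lem7}, \ref{lem10} and \ref{lem11}, are collected crudely into the single factor $S^{-1}(R-r)^{-(N+2q+2)}$, using $R<1$ and $S<1$. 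The dependence of $C$ on $\alpha$ enters through the $1/(\beta-\alpha)$ factor in Lemma \ref{lem11}.
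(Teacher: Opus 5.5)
Your proposal is correct and follows essentially the same route as the paper. Both bound the right-hand side of \eqref{4.0} via Lemma \ref{lem7} and H\"older, with the $1$-Laplacian terms giving the worst power $|h|^{\sigma(q-p+1)}$. Both then apply Lemma \ref{lem7} to $J_{\frac{q}{p}+1}(\tau_hu)$ with increment $h$, and use Lemma \ref{lem9} to get a second-difference bound with exponent $s_pp+\sigma(q-p+1)=\beta q$, before concluding with Lemmas \ref{lem10} and \ref{lem11}. Your explicit check that $\beta<1$ is a detail the paper leaves implicit. Your bookkeeping claim that everything closes linearly in $K$ needs the H\"older exponent on $\int[u]^q_{W^{\sigma,q}}$ in the $P_{1,2}$-term to be $\frac{p-2}{q}$. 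That is what H\"older with exponents $\frac{q}{p-2},\frac{q}{q-p+2}$ actually gives, not the $\frac{p-2}{p}$ printed in \eqref{4.0}.
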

\begin{proof}
	Since the difference quotient technique is applied only with respect to the spatial variables and we have assumed $u\in L^q_{\mathrm{loc}}\bigl( I;W^{\sigma,q}_{\mathrm{loc}}(\Omega)\bigl)$. By H\"{o}lder's inequality and Lemma \ref{lem7}, we obtain
	\begin{align}
		\label{6.0}
		&\quad\int_{T_0}^{T_1}\int_{B_{R-d}(x_0)}|\tau_hu|^r(x,t)\,\mathrm{d}x\mathrm{d}t\nonumber\\
		&\le \left[C|h|^{\sigma q}\left((1-\sigma)\int_{T_0}^{T_1}[u]^q_{W^{\sigma,q}(B_R(x_0))}\,\mathrm{d}t+\frac{7R^N}{\sigma(R-r)^q}\|u\|^q_{L^\infty\left( B_{R-d}(x_0)\times(T_0,T_1)\right) }\right)\right]^\frac{r}{q}, 
	\end{align}
	where $r\in\left\lbrace q-p+1,q-p+2\right\rbrace $.
	
	In view of Lemma \ref{lem8}, setting $d=:\frac{1}{7}(R-r)$, for any $(T_0,T_1)\subset\subset I$, we have
	\begin{align*}
		&\quad\|u\|_{L^\infty(B_{R-d}(x_0)\times(T_0,T_1))}+\operatorname*{ess\,sup}_{t\in(T_0,T_1)}\mathrm{Tail}(u;x_0,R-d,t)\\
		&\le C(N,p,s_p)\left( \frac{R}{R-r}\right)^N\left(\|u\|_{L^\infty(B_R(x_0)\times(T_0,T_1))}+\operatorname*{ess\,sup}_{t\in(T_0,T_1)}\mathrm{Tail}(u;x_0,R,t)\right)\\
		&\le C(N,p,s_p)\left( \frac{7}{6}\right)^N\left(\|u\|_{L^\infty(B_R(x_0)\times(T_0,T_1))}+\operatorname*{ess\,sup}_{t\in(T_0,T_1)}\mathrm{Tail}(u;x_0,R,t)\right).
	\end{align*}
	By substituting the above two inequalities into \eqref{4.0} with $(r, R)$ replaced by $(r+d, R-d)$, we derive
	\begin{align}
		\label{6.1}
		&\quad\int_{T_0+S}^{T_1}\int_{B_{r+d}(x_0)}\int_{B_{r+d}(x_0)}\frac{\left| J_{\frac{q}{p}+1}(\tau_hu)(x,t)-J_{\frac{q}{p}+1}(\tau_hu)(y,t)\right|^p}{|x-y|^{N+s_pp}}\,\mathrm{d}x\mathrm{d}y\mathrm{d}t\nonumber\\
		&\le \frac{C}{(R-r)^{q+2}}\int_{T_0}^{T_1}\left[|h|^{\sigma q}\left([u(\cdot,t)]^q_{W^{\sigma,q}(B_R(x_0))}+\|u\|^q_{L^\infty\left( B_{R}(x_0)\times(T_0,T_1)\right) }\right)\right]^{\frac{q-p+1}{q}}\mathrm{d}t\nonumber\\
		&\quad+\frac{C}{(R-r)^{q+1}}\int_{T_0}^{T_1}\left[|h|^{\sigma q}\left([u(\cdot,t)]^q_{W^{\sigma,q}(B_R(x_0))}+\|u\|^q_{L^\infty\left( B_{R}(x_0)\times(T_0,T_1)\right) }\right)\right]^{\frac{q-p+1}{q}}\mathrm{d}t\nonumber\\
		&\quad+\frac{C}{(R-r)^{q+4}}\left( \int_{T_0}^{T_1}[u(\cdot,t)]^{q}_{W^{\sigma,q}(B_R(x_0))}\,\mathrm{d}t\right)^\frac{p-2}{p}\nonumber\\
		&\quad\times\left(\int_{T_0}^{T_1}|h|^{\sigma q}\left([u(\cdot,t)]^q_{W^{\sigma,q}(B_R(x_0))}+\|u\|^q_{L^\infty\left( B_{R}(x_0)\times(T_0,T_1)\right) }\right)\mathrm{d}t\right)^{\frac{q-p+2}{q}}\nonumber\\
		&\quad+\frac{C}{S(R-r)^{q+2}}\int_{T_0}^{T_1}\left[|h|^{\sigma q}\left([u(\cdot,t)]^q_{W^{\sigma,q}(B_R(x_0))}+\|u\|^q_{L^\infty\left( B_{R}(x_0)\times(T_0,T_1)\right) }\right)\right]^{\frac{q-p+2}{q}}\mathrm{d}t\nonumber\\
		&\quad+\frac{C\mathcal{T}^{p-2}}{(R-r)^{N+q+2}}\int_{T_0}^{T_1}\left[|h|^{\sigma q}\left([u(\cdot,t)]^q_{W^{\sigma,q}(B_R(x_0))}+\|u\|^q_{L^\infty\left( B_{R}(x_0)\times(T_0,T_1)\right) }\right)\right]^{\frac{q-p+2}{q}}\mathrm{d}t\nonumber\\
		&\quad+\frac{C\mathcal{T}^{p-1}|h|}{(R-r)^{N+q+2}}\int_{T_0}^{T_1}\left[|h|^{\sigma q}\left([u(\cdot,t)]^q_{W^{\sigma,q}(B_R(x_0))}+\|u\|^q_{L^\infty\left( B_{R}(x_0)\times(T_0,T_1)\right) }\right)\right]^{\frac{q-p+1}{q}}\,\mathrm{d}t.
	\end{align}
	
	Next, we make use of Lemma \ref{lem7} to estimate the term on the left-hand side of \eqref{6.1} from blow to get
	\begin{align}
		\label{6.2}
		&\quad\int_{T_0+S}^{T_1}\int_{B_r(x_0)}\left|\tau_\lambda\left(J_{\frac{q}{p}+1}\left(\tau_hu\right) \right) \right|^p\,\mathrm{d}x\mathrm{d}t\nonumber\\
		&\le C|\lambda|^{s_pp}\int_{T_0+S}^{T_1}\int_{B_{r+d}(x_0)}\int_{B_{r+d}(x_0)}\frac{\left| J_{\frac{q}{p}+1}(\tau_hu)(x,t)-J_{\frac{q}{p}+1}(\tau_hu)(y,t)\right|^p}{|x-y|^{N+s_pp}}\,\mathrm{d}x\mathrm{d}y\mathrm{d}t\nonumber\\
		&\quad+\frac{C|\lambda|^{s_pp}}{s_p(R-r)^p}\int_{T_0+S}^{T_1}\int_{B_{r+d}(x_0)}|\tau_hu|^q\,\mathrm{d}x\mathrm{d}t
	\end{align}
	holds true for any $\lambda\in B_d\backslash\left\lbrace0\right\rbrace $. Combining \eqref{6.1} and \eqref{6.2}, we infer
	\begin{align}
		\label{6.3}
		&\quad\int_{T_0+S}^{T_1}\int_{B_r(x_0)}\left|\tau_\lambda\left(J_{\frac{q}{p}+1}\left(\tau_hu\right) \right) \right|^p\,\mathrm{d}x\mathrm{d}t\nonumber\\
		&\le \frac{C|\lambda|^{s_pp}}{(R-r)^{q+2}}\int_{T_0}^{T_1}\left[|h|^{\sigma q}\left([u(\cdot,t)]^q_{W^{\sigma,q}(B_R(x_0))}+\|u\|^q_{L^\infty\left( B_{R}(x_0)\times(T_0,T_1)\right) }\right)\right]^{\frac{q-p+1}{q}}\mathrm{d}t\nonumber\\
		&\quad+\frac{C|\lambda|^{s_pp}}{(R-r)^{q+4}}\left( \int_{T_0}^{T_1}[u(\cdot,t)]^{q}_{W^{\sigma,q}(B_R(x_0))}\,\mathrm{d}t\right)^\frac{p-2}{p}\nonumber\\
		&\quad\times\left(\int_{T_0}^{T_1}|h|^{\sigma q}\left([u(\cdot,t)]^q_{W^{\sigma,q}(B_R(x_0))}+\|u\|^q_{L^\infty\left( B_{R}(x_0)\times(T_0,T_1)\right) }\right)\mathrm{d}t\right)^{\frac{q-p+2}{q}}\nonumber\\
		&\quad+\frac{C|\lambda|^{s_pp}}{S(R-r)^{q+2}}\int_{T_0}^{T_1}\left[|h|^{\sigma q}\left([u(\cdot,t)]^q_{W^{\sigma,q}(B_R(x_0))}+\|u\|^q_{L^\infty\left( B_{R}(x_0)\times(T_0,T_1)\right) }\right)\right]^{\frac{q-p+2}{q}}\mathrm{d}t\nonumber\\
		&\quad+\frac{C\mathcal{T}^{p-2}|\lambda|^{s_pp}}{(R-r)^{N+q+2}}\int_{T_0}^{T_1}\left[|h|^{\sigma q}\left([u(\cdot,t)]^q_{W^{\sigma,q}(B_R(x_0))}+\|u\|^q_{L^\infty\left( B_{R}(x_0)\times(T_0,T_1)\right) }\right)\right]^{\frac{q-p+2}{q}}\mathrm{d}t\nonumber\\
		&\quad+\frac{C\mathcal{T}^{p-1}|\lambda|^{s_pp}|h|}{(R-r)^{N+q+2}}\int_{T_0}^{T_1}\left[|h|^{\sigma q}\left([u(\cdot,t)]^q_{W^{\sigma,q}(B_R(x_0))}+\|u\|^q_{L^\infty\left( B_{R}(x_0)\times(T_0,T_1)\right) }\right)\right]^{\frac{q-p+1}{q}}\,\mathrm{d}t\nonumber\\
		&\quad+\frac{C|\lambda|^{s_pp}}{(R-r)^{(R-r)^{q+p}}}\int_{T_0+S}^{T_1}\left[|h|^{\sigma q}\left([u(\cdot,t)]^q_{W^{\sigma,q}(B_R(x_0))}+\|u\|^q_{L^\infty\left( B_{R}(x_0)\times(T_0,T_1)\right) }\right)\right]\,\mathrm{d}t.
	\end{align}
	Setting $\lambda=h$ and utilizing Lemma \ref{lem9}, one can use the fact that $0<|h|<d<R<1$ to simplify \eqref{6.3} and deduce
	\begin{align}
		\label{6.4}
	&\quad	\int_{T_0+S}^{T_1}\int_{B_r(x_0)}\left|\tau_h(\tau_hu)\right|^q\,\mathrm{d}x\mathrm{d}t\nonumber\\
		&\le \frac{C\left[\left( \mathcal{T}+1\right)^q+\int_{T_0}^{T_1}[u(\cdot,t)]^q_{W^{\sigma,q}\left(B_R(x_0)\right)}\,\mathrm{d}t\right]}{S(R-r)^{N+2q+2}}|h|^{s_pp+\sigma(q-p+1)}.
	\end{align}
	
	Applying Lemma \ref{lem10}, we get the inequality regarding the first order difference $\tau_hu$ and step size $|h|$,
	\begin{align*}
		&\quad\int_{T_0+S}^{T_1}\int_{B_r(x_0)}|\tau_hu|^q\,\mathrm{d}x\mathrm{d}t\\
		&\le C\left[\frac{\left( \mathcal{T}+1\right)^q+\int_{T_0}^{T_1}[u(\cdot,t)]^q_{W^{\sigma,q}\left(B_R(x_0)\right)}\,\mathrm{d}t}{S(R-r)^{N+2q+2}}+\frac{\|u\|^q_{L^\infty\left(B_R(x_0)\times(T_0,T_1)\right) }}{(R-r)^{s_pp+\sigma(q-p+1)}}\right]|h|^{s_pp+\sigma(q-p+1)}\\
		&\le\frac{C\left[\left( \mathcal{T}+1\right)^q+\int_{T_0}^{T_1}[u(\cdot,t)]^q_{W^{\sigma,q}\left(B_R(x_0)\right)}\,\mathrm{d}t\right] }{S(R-r)^{N+2q+2}}|h|^{s_pp+\sigma(q-p+1)}
	\end{align*}
	for any $h\in B_{\frac{1}{2}d}\backslash\left\lbrace 0\right\rbrace$. A direct application of Lemma \ref{lem11} yields the desired estimate
	\begin{align*}
		&\quad\int_{T_0+S}^{T_1}[u(\cdot,t)]^q_{W^{\alpha,q}(B_r(x_0))}\,\mathrm{d}t\\
		&\le C\left[ \frac{\left( \mathcal{T}+1\right)^q+\int_{T_0}^{T_1}[u(\cdot,t)]^q_{W^{\sigma,q}\left(B_R(x_0)\right)}\,\mathrm{d}t}{(\beta-\alpha)S(R-r)^{N+2q+2}}+\frac{\|u\|^q_{L^\infty\left(B_R(x_0)\times(T_0,T_1)\right) }}{\alpha(R-r)^\alpha q}\right]\\
		&\le \frac{C}{S(R-r)^{N+2q+2}}\left[\left(\mathcal{T}+1\right)^q+\int_{T_0+S}^{T_1}[u(\cdot,t)]^q_{W^{\sigma,q}(B_R(x_0))}\,\mathrm{d}t\right].
	\end{align*}
\end{proof}

\begin{lemma}
	\label{lem12}
	Let $p\ge2$, $s_p\in\left( \frac{p-1}{p},1\right)$, $q\ge p$ and suppose that $u\in L^q_{\mathrm{loc}}\bigl(I;W^{\sigma,q}_{\mathrm{loc}}(\Omega)\bigl)$ is locally bounded weak solution to problem \eqref{1.1} in the sense of Definition \ref{def1} with $\sigma\in\left(\max\left\lbrace\mu^{s_p}_p,0\right\rbrace,1  \right) $. Then the following conclusions hold:
	\begin{itemize}
		\item[(i)] If $\sigma\in \left(\max\left\lbrace\mu^{s_p}_p,0\right\rbrace,\frac{q-s_pp}{q-p+1}\right]$, we have
		\begin{align*}
			u\in L^q_{\mathrm{loc}}\bigl(I;W^{\alpha,q}_{\mathrm{loc}}(\Omega)\bigl)
		\end{align*}
		for any $\alpha\in(\sigma,\beta)$ with
		\begin{align*}
			\beta:=\left(1-\frac{p-1}{q}\right)\sigma+\frac{s_pp}{q}. 
		\end{align*}
		Moreover, there exists a constant $C_1$ depending on $N,p,q,s_1,s_p,\alpha,\sigma$ such that for any ball $B_R(x_0)\subset\subset\Omega$ with $R\in(0,1)$, $r\in (0,R)$, $(T_0,T_1)\subset\subset I$ and $S\in \left( 0,\min\left\lbrace T_1-T_0,1\right\rbrace \right) $,
		\begin{align*}
			\int_{T_0+S}^{T_1}[u(\cdot,t)]^q_{W^{\alpha,q}\left( B_r(x_0)\right)}\mathrm{d}t\le\frac{C_1\left[\left(\mathcal{T}+1\right)^q+\int_{T_0}^{T_1}[u(\cdot,t)]^q_{W^{\sigma,q}\left( B_R(x_0)\right)}\mathrm{d}t\right]}{S(R-r)^{N+2q+2}};
		\end{align*}
		\item[(ii)] If $\sigma\in\left(\frac{q-s_pp}{q-p+1},1\right)$, we have
		\begin{align*}
			u\in L^q_{\mathrm{loc}}\bigl(I;W^{1,q}_{\mathrm{loc}}(\Omega)\bigl).
		\end{align*}
		Moreover, there exists a constant $C_2$ depending on $N,p,q,s_1,s_p,\sigma$ such that for any ball $B_R(x_0)\subset\subset\Omega$ with $R\in(0,1)$, $r\in (0,R)$, $(T_0,T_1)\subset\subset I$ and $S\in \left( 0,\min\left\lbrace T_1-T_0,1\right\rbrace \right) $,
		\begin{align*}
			\int_{T_0+S}^{T_1}\int_{B_r(x_0)}|\nabla u|^q\,\mathrm{d}x\mathrm{d}t\le\frac{C_2\left[\left(\mathcal{T}+1\right)^q+\int_{T_0}^{T_1}[u(\cdot,t)]^q_{W^{\sigma,q}\left( B_R(x_0)\right)}\mathrm{d}t\right]}{S(R-r)^{N+2q+2}}.
		\end{align*}
	\end{itemize}
\end{lemma}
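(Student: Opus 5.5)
The plan is to repeat the argument of Lemma \ref{lem6} up to the second-order difference estimate, and then split according to whether the exponent obtained there is below or above $1$. Note that for $s_p>\frac{p-1}{p}$ we have $\frac{s_pp}{p-1}>1$, so any $\sigma\in(\max\{\mu^{s_p}_p,0\},1)$ is admissible in Proposition \ref{pro4}.

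\textbf{Step 1: the second-order difference bound.} We fix $d=\frac17(R-r)$ and estimate $\int|\tau_hu|^{q-p+1}$ and $\int|\tau_hu|^{q-p+2}$ by H\"older's inequality and Lemma \ref{lem7}, exactly as in \eqref{6.0}. We control the tail on $B_{R-d}$ by Lemma \ref{lem8}. These bounds are inserted into \eqref{4.0} with $(r,R)$ replaced by $(r+d,R-d)$. We then pass from the $W^{s_p,p}$-seminorm of $J_{\frac qp+1}(\tau_hu)$ to finite differences $\tau_\lambda$ via Lemma \ref{lem7}, choose $\lambda=h$, and use Lemma \ref{lem9} to obtain $|\tau_h(\tau_hu)|^q\lesssim|\tau_h J_{\frac qp+1}(\tau_hu)|^p$. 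Since $|h|<1$, the lower powers of $|h|$ dominate. The term from the $|h|\mathcal{T}^{p-1}$ tail piece carries the extra factor $|h|$, so it is harmless. The term with exponent $\frac{q-p+2}{q}$ gives $|h|^{\sigma(q-p+2)}\le|h|^{\sigma(q-p+1)}$. The result is exactly \eqref{6.4}:
\[
\int_{T_0+S}^{T_1}\int_{B_r}|\tau_h(\tau_hu)|^q\,\mathrm{d}x\mathrm{d}t\le M^q|h|^{\gamma q},\qquad \gamma q=s_pp+\sigma(q-p+1),
\]
with $M^q=C[(\mathcal{T}+1)^q+\int[u]^q_{W^{\sigma,q}}]/(S(R-r)^{N+2q+2})$. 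We have $\gamma=\beta$ in the notation of the statement, and $\gamma\le1$ if and only if $\sigma\le\frac{q-s_pp}{q-p+1}$.

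\textbf{Step 2 (case (i)).} Here $\gamma<1$ strictly; the borderline value of $\sigma$ is handled in the next paragraph. We apply the case $\gamma\in(0,1)$ of Lemma \ref{lem10}, which gives $\int|\tau_hu|^q\lesssim|h|^{\beta q}$ with constant $\frac{M^q}{(1-\beta)^q}+d^{-\beta q}\|u\|^q_{L^q}$. Lemma \ref{lem11} then yields $W^{\alpha,q}$ for every $\alpha<\beta$, with the stated bound after absorbing $\|u\|_{L^\infty}\le\mathcal{T}$ and the powers of $(R-r)^{-1}$ into $(R-r)^{-(N+2q+2)}$.

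At the endpoint $\sigma=\frac{q-s_pp}{q-p+1}$ we have $\beta=1$. In that case we use the case $\gamma=1$ of Lemma \ref{lem10}, choosing $\lambda\in(\alpha,1)$, which gives $|h|^{\lambda q}$. Lemma \ref{lem11} with exponent $\lambda$ then gives every $\alpha<\lambda$, hence every $\alpha<\beta=1$. The constant depends on $\alpha$ through $\lambda$, which is allowed.

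\textbf{Step 3 (case (ii)).} Now $\gamma>1$. The case $\gamma>1$ of Lemma \ref{lem10} gives $\int_{T_0+S}^{T_1}\int_{B_r}|\tau_hu|^q\le C|h|^q[M^q(\gamma-1)^{-q}d^{(\gamma-1)q}+d^{-q}\|u\|^q_{L^q}]$. Since $d<1$, $d^{(\gamma-1)q}\le1$, and $d^{-q}$ is absorbed into the denominator. Lemma \ref{lem12.1} then gives weak differentiability and the bound on $\int|\nabla u|^q$. The factor $(\gamma-1)^{-q}$ depends only on $\sigma,q,p,s_p$, which matches the dependence of $C_2$.

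\textbf{Main obstacle.} The main difficulty is the bookkeeping of the radii. Lemma \ref{lem10} requires $d\le\frac12(R-r)$ and moves from $B_r$ to $B_R$; Lemma \ref{lem12.1} works on $B_{R-d}$. So intermediate balls such as $r'=\frac{r+R}{2}$ must be nested so that all powers collapse to $(R-r)^{-(N+2q+2)}$. Beyond that, one must check that every $|h|$-power in \eqref{6.3} is at least $\gamma q$ when $|h|<1$. This holds because $s_pp+\sigma(q-p+2)$, $s_pp+1+\sigma(q-p+1)$ and $s_pp+\sigma q$ all exceed $s_pp+\sigma(q-p+1)$, using $p\ge2$.
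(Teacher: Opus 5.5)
Your proposal is correct and follows the paper's proof essentially step for step: obtain the second-order bound \eqref{6.4} exactly as in Lemma \ref{lem6}, then split on whether $s_pp+\sigma(q-p+1)$ is below, equal to, or above $q$, applying the corresponding case of Lemma \ref{lem10} followed by Lemma \ref{lem11} or Lemma \ref{lem12.1}. At the endpoint $\sigma=\frac{q-s_pp}{q-p+1}$ you choose $\lambda\in(\alpha,1)$ before invoking Lemma \ref{lem11}, which is slightly more careful than the paper: the paper takes the exponent equal to $\alpha$ itself, and Lemma \ref{lem11} would then only yield exponents strictly below $\alpha$.
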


\begin{proof}
	For the case $\sigma\in \left(\max\left\lbrace\mu^{s_p}_p, 0\right\rbrace, \frac{q-s_pp}{q-p+1}\right]$, from the proof of Lemma \ref{lem6}, we can get the desired estimate directly.
	
	For the case $\sigma=\frac{q-s_pp}{q-p+1}$, note that $s_pp+\sigma(q-p+1)=q$, we apply Lemma \ref{lem10} on \eqref{6.4} to have
	\begin{align*}
		&\quad\int_{T_0+S}^{T_1}\int_{B_r(x_0)}|\tau_hu|^q\,\mathrm{d}x\mathrm{d}t\\
		&\le C\left[\frac{\left( \mathcal{T}+1\right)^q+\int_{T_0}^{T_1}[u(\cdot,t)]^q_{W^{\sigma,q}\left(B_R(x_0)\right)}\,\mathrm{d}t}{(1-\alpha)^qS(R-r)^{N+2q+2}}+\frac{\|u\|^q_{L^\infty\left(B_R(x_0)\times(T_0,T_1)\right) }}{(R-r)^{q}}\right]|h|^{\alpha q}\\
		&\le\frac{C\left[\left( \mathcal{T}+1\right)^q+\int_{T_0}^{T_1}[u(\cdot,t)]^q_{W^{\sigma,q}\left(B_R(x_0)\right)}\,\mathrm{d}t\right]}{S(R-r)^{N+2q+2}}|h|^{\alpha q}.
	\end{align*}
	The above estimate and Lemma \ref{lem11} enable us to obtain
	\begin{align*}
		\int_{T_0+S}^{T_1}[u(\cdot,t)]^q_{W^{\alpha,q}(B_r(x_0))}\,\mathrm{d}t\le \frac{C\left[\left( \mathcal{T}+1\right)^q+\int_{T_0}^{T_1}[u(\cdot,t)]^q_{W^{\sigma,q}\left(B_R(x_0)\right)}\,\mathrm{d}t\right]}{S(R-r)^{N+2q+2}}.
	\end{align*} 
	
	Similarly, for the case $\sigma\in \left(\frac{q-s_pp}{q-p+1},1\right)$, Lemma \ref{lem10} implies
	\begin{align*}
		\int_{T_0+S}^{T_1}\int_{B_r(x_0)}|\tau_hu|^q\,\mathrm{d}x\mathrm{d}t\le \frac{C\left[\left( \mathcal{T}+1\right)^q+\int_{T_0}^{T_1}[u(\cdot,t)]^q_{W^{\sigma,q}\left(B_R(x_0)\right)}\,\mathrm{d}t\right]}{S(R-r)^{N+2q+2}}|h|^{q}.
	\end{align*}
	We conclude from Lemma \ref{lem12.1} that
	\begin{align*}
		\int_{T_0+S}^{T_1}\int_{B_r(x_0)}|\nabla u|^q\,\mathrm{d}x\mathrm{d}t\le \frac{C\left[\left( \mathcal{T}+1\right)^q+\int_{T_0}^{T_1}[u(\cdot,t)]^q_{W^{\sigma,q}\left(B_R(x_0)\right)}\,\mathrm{d}t\right]}{S(R-r)^{N+2q+2}}.
	\end{align*}
This complets the proof.
\end{proof}

The preceding estimates provide the fundamental energy inequality that allows us to improve the Sobolev regularity of weak solutions via an iterative argument. In the lemma below, under an appropriate integrability condition, we establish the desired weak differentiability by means of repeated applications of Lemmas \ref{lem6} and \ref{lem12}. Finally, through a further iteration step, we improve the integrability exponent and thus remove the integrability assumptions imposed in the previous lemmas.

\begin{lemma}
	\label{lem13}
	Let $p\ge2$, $q\ge p$ and suppose that $u\in L^q_{\mathrm{loc}}\bigl(0,T;W^{\sigma,q}_{\mathrm{loc}}(\Omega)\bigl)$ is a locally bounded weak solution to problem \eqref{1.1} in the sense of Definition \ref{def1} with $\sigma\in \left( \max\left\lbrace\mu^{s_p}_p,0\right\rbrace,s_p\right]$. Then the following conclusions hold:
	\begin{itemize}
		\item[(i)] If $s_p\in\left(0,\frac{p-1}{p}\right]$, we have
		\begin{align*}
			u\in L^q_{\mathrm{loc}}\bigl(I;W^{\alpha,q}_{\mathrm{loc}}(\Omega)\bigl)
		\end{align*}
		for any $\alpha\in\left( \sigma,\frac{s_pp}{p-1}\right) $. Moreover, there exist constants $C_1,\kappa_1,\kappa_2$ depending on $N,p,q,s_1,s_p,\alpha$ and $\sigma$ such that for any ball $B_R(x_0)\subset\subset\Omega$ with $R\in(0,1)$, $r\in (0,R)$, $(T_0,T_1)\subset\subset(0,T)$ and $S\in \left( 0,\min\left\lbrace T_1-T_0,1\right\rbrace \right) $,
		\begin{align*}
			\int_{T_0+S}^{T_1}[u(\cdot,t)]^q_{W^{\alpha,q}\left( B_r(x_0)\right) }\mathrm{d}t\le\frac{C_1\left[\left(\mathcal{T}+1\right)^q+\int_{T_0}^{T_1}[u(\cdot,t)]^q_{W^{\sigma,q}\left( B_R(x_0)\right)}\mathrm{d}t\right]}{S^{\kappa_2}(R-r)^{\kappa_1}};
		\end{align*}
		\item[(ii)] If $s_p\in \left(\frac{p-1}{p},1\right)$, we have
		\begin{align*}
			u\in L^q_{\mathrm{loc}}\bigl(I;W^{1,q}_{\mathrm{loc}}(\Omega)\bigl).
		\end{align*}
		Moreover, there exist constants $C_2,\kappa_3,\kappa_4$ depending on $N,p,q,s_1,s_p$ and $\sigma$ such that for any ball $B_R(x_0)\subset\subset\Omega$ with $R\in(0,1)$, $r\in (0,R)$, $(T_0,T_1)\subset\subset(0,T)$ and $S\in \left( 0,\min\left\lbrace T_1-T_0,1\right\rbrace \right) $,
		\begin{align*}
			\int_{T_0+S}^{T_1}\int_{B_r(x_0)}|\nabla u|^q\,\mathrm{d}x\mathrm{d}t\le\frac{C_2\left[\left(\mathcal{T}+1\right)^q+\int_{T_0}^{T_1}[u(\cdot,t)]^q_{W^{\sigma,q}\left( B_R(x_0)\right)}\mathrm{d}t\right]}{S^{\kappa_4}(R-r)^{\kappa_3}}.
		\end{align*}
	\end{itemize}
\end{lemma}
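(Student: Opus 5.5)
The plan is to iterate Lemmas \ref{lem6} and \ref{lem12} finitely many times on a shrinking family of nested cylinders. Define the sequence
\begin{align*}
	\sigma_0:=\sigma,\qquad \sigma_{i+1}:=\left(1-\frac{p-1}{q}\right)\sigma_i+\frac{s_pp}{q}-\epsilon_i ,
\end{align*}
with small $\epsilon_i>0$ chosen so that $\sigma_{i+1}$ lies in $(\sigma_i,\beta_i)$, where $\beta_i$ is the exponent $\beta$ of Lemma \ref{lem6} computed with $\sigma_i$. The affine map $g(\sigma)=(1-\frac{p-1}{q})\sigma+\frac{s_pp}{q}$ has unique fixed point $\sigma^*=\frac{s_pp}{p-1}$ and contraction factor $1-\frac{p-1}{q}<1$. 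Hence
\begin{align*}
	\sigma^*-g(\sigma)=\left(1-\frac{p-1}{q}\right)(\sigma^*-\sigma),
\end{align*}
so every step gains at least $\frac{p-1}{q}(\sigma^*-\sigma_i)$. Since $\sigma_i$ increases, it stays above $\max\{\mu^{s_p}_p,0\}$, so the hypotheses of Lemmas \ref{lem6}/\ref{lem12} remain valid at each stage.

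\emph{Case (i), $s_p\le\frac{p-1}{p}$.} Given a target $\alpha<\frac{s_pp}{p-1}$, I choose $\epsilon_i$ (say $\epsilon_i=\frac12\cdot\frac{p-1}{q}(\sigma^*-\alpha)$) so that the gain per step is bounded below by a fixed positive amount while $\sigma_i<\alpha$. Then after $n$ steps, with $n$ depending only on $N,p,q,s_p,\sigma,\alpha$, we reach $\sigma_n\ge\alpha$; the last step is simply taken with exponent $\alpha$ itself. For the radii, take $r_i:=R-\frac{i}{n}(R-r)$ so that $r_i-r_{i+1}=\frac{R-r}{n}$, and for the time, take starting points $T_0+\frac{iS}{n}$ with time-lag $\frac Sn$. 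Each application of Lemma \ref{lem6} on $B_{r_i}\times(T_0+\frac{iS}{n},T_1)$ yields
\begin{align*}
	\int[u]^q_{W^{\sigma_{i+1},q}(B_{r_{i+1}})}\,\mathrm{d}t\le \frac{Cn^{N+2q+3}}{S(R-r)^{N+2q+2}}\left[(\mathcal T+1)^q+\int[u]^q_{W^{\sigma_i,q}(B_{r_i})}\,\mathrm{d}t\right],
\end{align*}
where $\mathcal T$ on the smaller balls is controlled by Lemma \ref{lem8}, since $R/r_i$ is bounded once $r\ge R/2$ (otherwise absorb into the $(R-r)$ power). Chaining these $n$ inequalities gives the claimed bound with $\kappa_1=n(N+2q+2)$ and $\kappa_2=n$, using $S,R-r<1$ to merge powers.

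\emph{Case (ii), $s_p>\frac{p-1}{p}$.} Here $\sigma^*>1$, and the threshold $\frac{q-s_pp}{q-p+1}<1$ (because $s_pp>p-1$). The same iteration via Lemma \ref{lem12}(i) increases $\sigma_i$ by a fixed amount as long as $\sigma_i\le\frac{q-s_pp}{q-p+1}$. So after finitely many steps, $\sigma_i$ enters $\left(\frac{q-s_pp}{q-p+1},1\right)$; I choose $\epsilon_i$ to avoid overshooting $1$, which is possible since $g(\sigma)>\sigma$ there. At that point one application of Lemma \ref{lem12}(ii) gives the $W^{1,q}$ estimate, and the constants are chained as in case (i).

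The main obstacle I expect is the bookkeeping ensuring the number of iterations is uniformly bounded (depending only on the data and $\alpha$), and that the Lemma \ref{lem8} tail-transfer factors $(R/r_i)^N$ do not degenerate. I plan to handle the latter by always keeping $r_i\ge r$, and absorbing $(R/r)^N\le (R-r)^{-N}$ when $r<R/2$ into the exponent $\kappa_1$.
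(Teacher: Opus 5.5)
Your proposal is correct and follows essentially the paper's route. It is a finite iteration of Lemma \ref{lem6} in case (i), and of Lemma \ref{lem12}(i) followed by one application of Lemma \ref{lem12}(ii) in case (ii), on nested cylinders. The paper uses $\alpha_{i+1}=(1-\frac{p-1}{q})\alpha_i+\frac{(p-1)\tilde{\alpha}}{q}$ converging to $\tilde{\alpha}=\frac12(\alpha+\frac{s_pp}{p-1})$, with dyadic radii $\rho_i=r+2^{-i}(R-r)$ and time shifts $S(1-2^{-i})$, whereas you subtract a fixed $\epsilon$ and space everything equally; the differences are cosmetic.

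One bookkeeping correction. The inequality $(R/r)^N\le(R-r)^{-N}$ for $r<R/2$ is false (let $r\to0$), but you do not need it. Tails only enter at the outer radii $r_i$ with $i\le n-1$, and there $r_i\ge\frac{R-r}{n}$ gives $R/r_i\le\frac{n}{R-r}$. In fact, splitting the integral shows that for every $\rho<R$ and a.e.\ $t\in(T_0,T_1)$,
$\mathrm{Tail}(u;x_0,\rho,t)\le C(N,p,s_p)\bigl(\|u\|_{L^\infty(B_R(x_0)\times(T_0,T_1))}+\mathrm{Tail}(u;x_0,R,t)\bigr)$, with no ratio factor at all.
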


\begin{proof}
	We first consider the case $s_p\in \left( 0,\frac{p-1}{p}\right] $. For fixed $\alpha\in \left(\sigma,\frac{s_pp}{p-1} \right)$, by setting $\tilde{\alpha}:=\frac{1}{2}\left( \alpha+\frac{s_pp}{p-1}\right) $ and defining the sequences $\left\lbrace\alpha_i\right\rbrace $, $\left\lbrace\rho_i\right\rbrace $, $\left\lbrace\beta_i\right\rbrace $, $\left\lbrace S_i\right\rbrace $ and $\left\lbrace\mathcal{T}_i\right\rbrace$ as follows
	\begin{align*}
		\begin{cases}
			\alpha_0:=\sigma,\,\alpha_{i+1}:=\left( 1-\frac{p-1}{q}\right)\alpha_i+\frac{\tilde{\alpha}(p-1)}{q},\\
			\rho_i:=r+\frac{R-r}{2^i},\\
			\beta_i:=\left( 1-\frac{p-1}{q}\right)\sigma_{i-1}+\frac{s_pp}{q},\\
			S_i:=S\left(1-\frac{1}{2^i}\right),\\
			\mathcal{T}_i:=\|u\|_{L^\infty\left(B_{\rho_i}(x_0)\times(T_0,T_1)\right)}+\operatorname*{ess\,sup}\limits_{t\in(T_0,T_1)}\mathrm{Tail}\left(u;x_0,\rho_i,t\right).   
		\end{cases}
	\end{align*}
	One can verify that $\alpha_i$ goes to $\tilde{\alpha}$ as $i$ goes to $+\infty$. This ensures that we can improve differentiability of weak solution to $\alpha$ with a finite number of iterations. For any $i\in\mathbb{N}^+$, by Lemma \ref{lem6}, we have the estimate
	\begin{align}
		\label{13.1}
	&\quad	\int_{T_0+S_i}^{T_1}[u(\cdot,t)]^q_{W^{\alpha_i,q}(B_{\rho_i}(x_0))}\,\mathrm{d}t \nonumber\\
		&\le\frac{C_i\left[\left(\mathcal{T}_{i-1}+1\right)^q+\int_{T_0+S_{i-1}}^{T_1}[u(\cdot,t)]^q_{W^{\alpha_{i-1},q}\left( B_{\rho_{i-1}}(x_0)\right)}\mathrm{d}t\right]}{\frac{S}{2^i}(\rho_{i-1}-\rho_i)^{N+2q+2}}.
	\end{align}
	From Lemma \ref{lem8}, we get the facts
	\begin{align}
		\label{13.2}
		\begin{cases}
			\mathcal{T}_i\le C(N)2^{iN}\mathcal{T},\\
			\frac{1}{\rho_{i-1}-\rho_i}\le \frac{2^i}{R-r}.
		\end{cases}
	\end{align}
	Bringing \eqref{13.2} into \eqref{13.1} we obtain
	\begin{align}
		\label{13.3}
		&\quad\int_{T_0+S_i}^{T_1}[u(\cdot,t)]^q_{W^{\alpha_i,q}(B_{\rho_i}(x_0))}\,\mathrm{d}t\nonumber\\
		&\le2^{i(2N+2q+3)}\frac{C_i\left[\left(\mathcal{T}+1\right)^q+\int_{T_0+S_{i-1}}^{T_1}[u(\cdot,t)]^q_{W^{\alpha_{i-1},q}\left( B_{\rho_{i-1}}(x_0)\right)}\mathrm{d}t\right]}{S(R-r)^{N+2q+2}}.
	\end{align}
	We now can show that $C_i$ here is independent of $i$. Indeed, this constant can be estimated as
	\begin{align*}
		C_i\le C_*:=\frac{C(N,p,q)}{s_1s_p(1-s_1)\left[ N-\frac{q}{q-p+2}(N+s_pp-2-\sigma p+2\sigma)\right]^\frac{q-p+2}{q}\sigma^2\left(\frac{s_pp}{p-1}-\alpha\right) }.
	\end{align*}
	Iterating \eqref{13.3} with the index $i$ from $1$ to $i_0\in\mathbb{N}^+$, we deduce
	\begin{align}
		\label{13.4}
		&\quad\int_{T_0+S_{i_0}}^{T_1}[u(\cdot,t)]^q_{W^{\alpha_{i_0},q}(B_{\rho_{i_0}}(x_0))}\,\mathrm{d}t\nonumber\\
		&\le2^{i_0+(2N+2q+3)\frac{(1+i_0)i_0}{2} }\frac{C_{*}^{i_0}\left[\left(\mathcal{T}+1\right)^q+\int_{T_0}^{T_1}[u(\cdot,t)]^q_{W^{\sigma,q}\left( B_{R}(x_0)\right)}\mathrm{d}t\right]}{S^{i_0}(R-r)^{(N+2q+2)i_0}}.
	\end{align}
	In this step, we set $\alpha_{i_0}>\alpha$. Since $\alpha_i=\left(\frac{q-p+1}{q}\right)^i\left( \sigma-\tilde{\alpha}\right)+\tilde{\alpha}$, one can choose
	\begin{align*}
		i_0:=\left[\frac{\ln \frac{\tilde{\alpha}-\alpha}{\tilde{\alpha}-\sigma}}{\ln\frac{q-p+1}{q}}\right]+1. 
	\end{align*}
	Therefore, from \eqref{13.4}, there holds
	\begin{align}
		\label{13.5}
		&\quad\int_{T_0+S}^{T_1}[u(\cdot,t)]^q_{W^{\alpha,q}(B_{r}(x_0))}\,\mathrm{d}t\nonumber\\
		&=\int_{T_0+S}^{T_1}\int_{B_{r}(x_0)}\int_{B_{r}(x_0)}\frac{|u(x,t)-u(y,t)|^q}{|x-y|^{N+\alpha_{i_0}q+(\alpha q-\alpha_{i_0}q)}}\,\mathrm{d}x\mathrm{d}y\mathrm{d}t\nonumber\\
		&\le2^q \int_{T_0+S_{i_0}}^{T_1}\int_{B_{\rho_{i_0}}(x_0)}\int_{B_{\rho_{i_0}}(x_0)}\frac{|u(x,t)-u(y,t)|^q}{|x-y|^{N+\alpha_{i_0}q}}\,\mathrm{d}x\mathrm{d}y\mathrm{d}t\nonumber\\
		&\le2^{q+i_0+(2N+2q+3)\frac{(1+i_0)i_0}{2} }\frac{C_{*}^{i_0}\left[\left(\mathcal{T}+1\right)^q+\int_{T_0}^{T_1}[u(\cdot,t)]^q_{W^{\sigma,q}\left( B_{R}(x_0)\right)}\mathrm{d}t\right]}{S^{i_0}(R-r)^{(N+2q+2)i_0}}.
	\end{align}
	Letting $C:=2^{q+i_0+(2N+2q+3)\frac{(1+i_0)i_0}{2} }C_{*}^{i_0}$, $\kappa_1:=(N+2q+2)i_0$ and $\kappa_2:=i_0$, we get the desired result.
	
	Next, we prove the result in another case. To this end, we set
	\begin{align*}
		\gamma:=\frac{2q-p+1-s_pp}{2(q-p+1)},\quad \tilde{\gamma}:=\frac{1+\gamma}{2}
	\end{align*}
	and define the sequence $\left\lbrace\gamma_i\right\rbrace$ as
	\begin{align*}
		\gamma_0:=\sigma,\quad\gamma_{i+1}:=\left(1-\frac{p-1}{q}\right)\gamma_i+\frac{\tilde{\gamma}(p-1)}{q}. 
	\end{align*}
	Applying the method above, one can obtain an estimate similar to \eqref{13.5}, that is, for $k_0\in \mathbb{N}^+$,
	\begin{align*}
		&\quad\int_{T_0+S_{k_0}}^{T_1}[u(\cdot,t)]^q_{W^{\gamma,q}(B_{k_0}(x_0))}\,\mathrm{d}t\nonumber\\
		&\le2^{q+k_0+(2N+2q+3)\frac{(1+k_0)k_0}{2} }\frac{C_{*}^{k_0}\left[\left(\mathcal{T}+1\right)^q+\int_{T_0}^{T_1}[u(\cdot,t)]^q_{W^{\sigma,q}\left( B_{R}(x_0)\right)}\mathrm{d}t\right]}{S^{i_0}(R-r)^{(N+2q+2)i_0}},
	\end{align*}
	where we let
	\begin{align*}
		k_0:=\left[\frac{\ln\frac{\tilde{\gamma}-\gamma}{\tilde{\gamma}-\sigma}}{\ln\frac{q-p+1}{q}}\right]+1 
	\end{align*}
	so that $\gamma_{k_0}>\gamma$. Utilizing (ii) of Lemma \ref{lem12} with parameters $R,\sigma,T_0,S$ replaced by $\rho_{k_0},\gamma,T_0+S_{k_0},S-S_{k_0}$ respectively, we arrive at
	\begin{align*}
		&\quad\int_{T_0+S}^{T_1}\int_{B_r(x_0)}|\nabla u|^q\,\mathrm{d}x\mathrm{d}t\\
		&\le\frac{4^{k_0}C_L\left[\left(\mathcal{T}+1\right)^q+\int_{T_0+S_{k_0}}^{T_1}[u(\cdot,t)]^q_{W^{\gamma,q}\left( B_{\rho_{k_0}}(x_0)\right)}\mathrm{d}t\right]}{S(R-r)^{N+2q+2}}\\
		&\le 2^{q+3k_0+1+(2N+2q+3)\frac{(1+k_0)k_0}{2}}\frac{C_*^{k_0}C_L\left[\left(\mathcal{T}+1\right)^q+\int_{T_0+S_{k_0}}^{T_1}[u(\cdot,t)]^q_{W^{\gamma,q}\left( B_{\rho_{k_0}}(x_0)\right)}\mathrm{d}t\right]}{S^{k_0+1}(R-r)^{(N+2q+2)(k_0+1)}}.
	\end{align*}
	Setting $C_2:=2^{q+3k_0+1+(2N+2q+3)\frac{(1+k_0)k_0}{2}}C_*^{k_0}C_L$, $\kappa_3:=(N+2q+2)(k_0+1)$ and $\kappa_4:=k_0+1$, we complete the proof.
\end{proof}

Combining the estimates obtained above, we are now in a position to establish the Sobolev regularity of weak solutions with respect to spatial variable in the super-quadratic case.

\begin{theorem}
	\label{th14}
	Let $p\ge2$ and suppose $u$ is a locally bounded weak solution to problem \eqref{1.1} in the sense of Definition \ref{def1}. Then, for any $q\ge p$, the following conclusions hold:
	\begin{itemize}
		\item[(i)] If $s_p\in \left( 0,\frac{p-1}{p}\right]$, we have
		\begin{align*}
			u\in L^q_{\mathrm{loc}}\bigl(I;W^{\alpha,q}_{\mathrm{loc}}(\Omega)\bigl)
		\end{align*}
		for any $\alpha\in\left(s_p,\frac{s_pp}{p-1}\right)$. Moreover, there exist constants $C_1,\lambda_1,\lambda_2$ depending on $N,p,q,s_1,s_p$ and $\alpha$ such that for any ball $B_R(x_0)\subset\subset\Omega$ with $R\in(0,1)$, $r\in (0,R)$, $(T_0,T_1)\subset\subset I$ and $S\in \left( 0,\min\left\lbrace T_1-T_0,1\right\rbrace \right) $,
		\begin{align*}
			\int_{T_0+S}^{T_1}[u(\cdot,t)]^q_{W^{\alpha,q}\left( B_r(x_0)\right) }\mathrm{d}t\le\frac{C_1\left[\left(\mathcal{T}+1\right)^q+\left( \int_{T_0}^{T_1}[u(\cdot,t)]^p_{W^{s_p,p}\left( B_R(x_0)\right)}\mathrm{d}t\right)^{\frac{q}{p}} \right]}{S^{\lambda_2}(R-r)^{\lambda_1}};
		\end{align*}
		\item[(ii)] If $s_p\in \left( \frac{p-1}{p},1\right)$, we have
		\begin{align*}
			u\in L^q_{\mathrm{loc}}\bigl(I;W^{1,q}_{\mathrm{loc}}(\Omega)\bigl).
		\end{align*}
		Moreover, there exist constants $C_2,\theta_1,\theta_2$ depending on $N,p,q,s_1$ and $s_p$ such that for any ball $B_R(x_0)\subset\subset\Omega$ with $R\in(0,1)$, $r\in (0,R)$, $(T_0,T_1)\subset\subset(0,T)$ and $S\in \left( 0,\min\left\lbrace T_1-T_0,1\right\rbrace \right) $,
		\begin{align*}
			\int_{T_0+S}^{T_1}\int_{B_r(x_0)}|\nabla u|^q\,\mathrm{d}x\mathrm{d}t\le\frac{C_2\left[\left(\mathcal{T}+1\right)^q+\left( \int_{T_0}^{T_1}[u(\cdot,t)]^p_{W^{s_p,p}\left( B_R(x_0)\right)}\mathrm{d}t\right)^{\frac{q}{p}} \right]}{S^{\theta_2}(R-r)^{\theta_1}}.
		\end{align*}
	\end{itemize}
\end{theorem}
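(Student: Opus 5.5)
The plan is to reduce Theorem \ref{th14} to Lemma \ref{lem13}. The only assumption we have is $u\in L^p_{\mathrm{loc}}\bigl(I;W^{s_p,p}_{\mathrm{loc}}(\Omega)\bigr)$ plus local boundedness. Lemma \ref{lem13}, however, needs $u\in L^q_{\mathrm{loc}}\bigl(I;W^{\sigma,q}_{\mathrm{loc}}(\Omega)\bigr)$ with $\sigma\in(\max\{\mu^{s_p}_p,0\},s_p]$ and $q\ge p$. So the main task is to manufacture such a starting exponent pair $(\sigma,q)$ from the $W^{s_p,p}$ information and the $L^\infty$ bound.

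\emph{Step 1: the case $q=p$.} Here $\mu^{s_p}_p=\frac{s_pp-2}{p-2}<s_p$ for $p>2$, since this is equivalent to $s_pp-2<s_pp-2s_p$. For $p=2$ we have $\mu=0<s_p$. Hence $\sigma=s_p$ is admissible in Lemma \ref{lem13}, and applying it with $q=p$ gives the result for $q=p$ directly.

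\emph{Step 2: general $q>p$.} I would use the interpolation
\begin{align*}
|u(x,t)-u(y,t)|^q\le (2\|u\|_{L^\infty})^{q-p}|u(x,t)-u(y,t)|^p .
\end{align*}
It yields $[u(\cdot,t)]^q_{W^{\sigma,q}(B_R)}\le (2\mathcal{T})^{q-p}[u(\cdot,t)]^p_{W^{s_p,p}(B_R)}$ with $\sigma:=\frac{s_pp}{q}$. This $\sigma$ may fall at or below $\mu^{s_p}_p$ when $q$ is large. To avoid that, I would instead take the improved regularity from Step 1: for $q=p$ we get $u\in L^p\bigl(W^{\alpha',p}\bigr)$ with $\alpha'$ close to $\frac{s_pp}{p-1}$ in case (i), and $u\in L^p\bigl(W^{1,p}\bigr)$ in case (ii). In case (ii), Lemma \ref{lem14.1} gives $W^{1,p}\hookrightarrow W^{\gamma,p}$ for $\gamma<1$. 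Interpolating with $L^\infty$ as above then gives $W^{\alpha' p/q,q}$.

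If $\alpha'p/q$ still lies below $\mu^{s_p}_p$, I would raise $q$ gradually along a finite chain $p=q_0<q_1<\dots<q_m=q$ with $q_{k+1}/q_k$ close to $1$. At each stage, Lemma \ref{lem13} restores differentiability close to $\frac{s_pp}{p-1}$ (or to $1$). The interpolation then only reduces the exponent by the factor $q_k/q_{k+1}$, which keeps it above $\mu^{s_p}_p$ and, after capping, inside $(\mu^{s_p}_p,s_p]$. This capping uses the elementary bound $[u]_{W^{\sigma',q}(B_R)}\le C[u]_{W^{\sigma,q}(B_R)}$ for $\sigma'<\sigma$ on balls of radius less than $1$.

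\emph{Step 3: bookkeeping.} Each application of Lemma \ref{lem13} is made on nested balls $\rho_k$ and shifted times $T_0+S_k$, as in its proof. Lemma \ref{lem8} controls $\mathcal{T}_{\rho_k}$ by $C2^{kN}\mathcal{T}$. The resulting estimates have the form $(\mathcal{T}+1)^{q_k}+(\mathcal{T}+1)^{q_k-q_{k-1}}\cdot(\text{previous})$. Using Young's inequality and $\mathcal{T}+1\ge1$, these collapse to $(\mathcal{T}+1)^q+\bigl(\int[u]^p_{W^{s_p,p}}\bigr)^{q/p}$. To get the power $q/p$ rather than a product, I would interpolate in time with Hölder's inequality, or simply absorb via Young's inequality. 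The exponents of $S$ and $R-r$ multiply over a chain whose length depends only on $N,p,q,s_p,\alpha$.

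The main obstacle is Step 2: ensuring the starting $\sigma$ stays strictly above $\mu^{s_p}_p$ when the integrability is raised. The iterated chain with small ratios $q_{k+1}/q_k$ is what I would try first.
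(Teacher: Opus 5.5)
Your proposal is correct and follows essentially the paper's route. The paper also interpolates against $\|u\|_{L^\infty}$ via \eqref{14.1} (using Lemma \ref{lem14.1} in case (ii)) and climbs a chain $q_i=\mu^ip$ with ratio $\mu<\frac{s_p(p-2)}{s_pp-2}$, which keeps the interpolated exponent above $\mu^{s_p}_p$; it re-applies Lemma \ref{lem13} at each rung on nested balls and shifted times and closes the powers by Young's inequality, as in your Step 3. The only differences are that the paper takes $\sigma=\frac{s_pp}{q}$ in one shot when $\mu^{s_p}_p\le 0$, and that your explicit capping of $\sigma$ into $(\mu^{s_p}_p,s_p]$ spells out a step the paper leaves implicit.
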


\begin{proof}
	Since the weak solution $u$ is locally bounded in $\Omega_I$, for $\Omega_0\times(t_0,t_1)\subset\subset\Omega\times I$, we have the following estimate
	\begin{align}
		\label{14.1}
		\int_{t_0}^{t_1}[u(\cdot,t)]^{q_1}_{W^{\frac{\alpha q_2}{q_1},q_1}(\Omega_0)}\,\mathrm{d}t&\le C(q_1,q_2)\|u\|^{q_1-q_2}_{L^\infty\left(\Omega_0\times(t_0,t_1)\right)}\int_{t_0}^{t_1}[u(\cdot,t)]^{q_2}_{W^{\alpha,q_2}(\Omega_0)}\,\mathrm{d}t\nonumber\\
		&\le C(q_1,q_2)\left[\|u\|_{L^\infty\left(\Omega_0\times(t_0,t_1)\right)}+\left( \int_{t_0}^{t_1}[u(\cdot,t)]^{q_2}_{W^{\alpha,q_2}(\Omega_0)}\,\mathrm{d}t\right)^{\frac{1}{q_2}}\right]^{q_1},
	\end{align}
	where we use Young's inequality in the last line.
	
	We first consider the case $s_p\in\left(0,\min\left\lbrace\frac{p-1}{p},\frac{2}{p}\right\rbrace \right]$. From Lemma \ref{lem13} with $\sigma=\frac{s_pp}{q}$ and \eqref{14.1}, we can directly get, for any $\alpha\in \left(s_p,\frac{s_pp}{p-1}\right)$,
	\begin{align*}
		\int_{T_0+S}^{T_1}[u(\cdot,t)]^q_{W^{\alpha,q}\left( B_r(x_0)\right) }\mathrm{d}t&\le\frac{C\left[\left(\mathcal{T}+1\right)^q+\int_{T_0}^{T_1}[u(\cdot,t)]^q_{W^{\frac{s_pp}{q},q}\left( B_R(x_0)\right)}\mathrm{d}t\right]}{S^{\kappa_2}(R-r)^{\kappa_1}}\\
		&\le \frac{C_1\left[\left(\mathcal{T}+1\right)^q+\left( \int_{T_0}^{T_1}[u(\cdot,t)]^p_{W^{s_p,p}\left( B_R(x_0)\right)}\mathrm{d}t\right) ^{\frac{q}{p}}\right]}{S^{\kappa_2}(R-r)^{\kappa_1}}.
	\end{align*}
	
	For the case $s_p\in\left(0,\frac{p-1}{p}\right]\backslash\left(0,\frac{2}{p}\right]$ (in this case, $p>3$), we utilize the sequences $\left\lbrace \rho_i\right\rbrace $, $\left\lbrace \mathcal{T}_i\right\rbrace $, $\left\lbrace S_i\right\rbrace $ defined in proof of Lemma \ref{lem13} and define $\left\lbrace q_i\right\rbrace$ as 
	\begin{align*}
		q_i:=\mu^ip,\quad i\in\mathbb{N}^+
	\end{align*}
	with $\mu\in\left(1,\frac{s_p(p-2)}{s_pp-2}\right)$ being determined later.
	
	We will show the desired result by an induction argument. For $i=1$, we apply (i) in Lemma \ref{lem13} and \eqref{14.1} to deduce
	\begin{align*}
		\int_{T_0+S_1}^{T_1}[u(\cdot,t)]^{q_1}_{W^{\alpha,q_1}\left( B_{\rho_1}(x_0)\right) }\mathrm{d}t&\le\frac{C\left[\left(\mathcal{T}+1\right)^{q_1}+\left( \int_{T_0}^{T_1}[u(\cdot,t)]^p_{W^{s_p,p}\left( B_R(x_0)\right)}\mathrm{d}t\right)^{\frac{q_1}{p}}\right]}{\left( \frac{S}{2}\right) ^{\kappa_2}(\rho_0-\rho_1)^{\kappa_1}}\nonumber\\
		&\le \frac{2^{\kappa_1+\kappa_2}C\left[\left(\mathcal{T}+1\right)^{q_1}+\left( \int_{T_0}^{T_1}[u(\cdot,t)]^p_{W^{s_p,p}\left( B_R(x_0)\right)}\mathrm{d}t\right)^{\frac{q_1}{p}}\right]}{S ^{\kappa_2}(R-r)^{\kappa_1}}.
	\end{align*}
	For any fixed $i\in\mathbb{N}^+$, we assume
	\begin{align}
		\label{14.2}
		\int_{T_0+S_i}^{T_1}[u(\cdot,t)]^{q_i}_{W^{\alpha,q_i}\left( B_{\rho_i}(x_0)\right) }\mathrm{d}t
		\le \frac{C_i\left[\left(\mathcal{T}+1\right)^{q_i}+\left( \int_{T_0}^{T_1}[u(\cdot,t)]^p_{W^{s_p,p}\left( B_R(x_0)\right)}\mathrm{d}t\right)^{\frac{q_i}{p}}\right]}{S ^{\tilde{\lambda}_i}(R-r)^{\lambda'_i}}
	\end{align}
	holds true. The above inequality implies
	\begin{align*}
		u\in L^{q_i}\bigl(T_0+S_i,T_1;W^{\alpha,q_i}\left(B_{\rho_i}(x_0)\right)\bigl).
	\end{align*}
	Observe the fact that $\frac{\alpha}{\mu}>\mu^{s_p}_p$, one can apply (i) in Lemma \ref{lem13} and \eqref{14.1} to get
	\begin{align*}
		&\quad\int_{T_0+S_{i+1}}^{T_1}[u(\cdot,t)]^{q_{i+1}}_{W^{\alpha,q_{i+1}}\left( B_{\rho_{i+1}}(x_0)\right) }\mathrm{d}t\\
		&\le \frac{\widetilde{C}_{i+1}\left[\left(\mathcal{T}+1\right)^{q_{i+1}}+\left( \int_{T_0+S_{i}}^{T_1}[u(\cdot,t)]^{q_i}_{W^{\alpha,q_i}\left( B_{\rho_i}(x_0)\right)}\mathrm{d}t\right)^{\frac{q_{i+1}}{q_i}}\right]}{\left(\frac{S}{2^{i+1}}\right)  ^{\tilde{\kappa}_i}(\rho_{i}-\rho_{i+1})^{\kappa'_i}}\\
		&\le 2^{i(\tilde{\kappa}_i+\kappa'_i)}\frac{\widetilde{C}_{i+1}\left[\left(\mathcal{T}+1\right)^{q_{i+1}}+\left( \int_{T_0+S_{i}}^{T_1}[u(\cdot,t)]^{q_i}_{W^{\alpha,q_i}\left( B_{\rho_i}(x_0)\right)}\mathrm{d}t\right)^{\mu}\right]}{S  ^{\tilde{\kappa}_i}(R-r)^{\kappa'_i}}\\
		&\le 2^{i(\tilde{\kappa}_i+\kappa'_i)}\frac{\widetilde{C}_{i+1}\left[\left(\mathcal{T}+1\right)^{q_{i+1}}+\left( \frac{C_i\left[\left(\mathcal{T}+1\right)^{q_i}+\left( \int_{T_0}^{T_1}[u(\cdot,t)]^p_{W^{s_p,p}\left( B_R(x_0)\right)}\mathrm{d}t\right)^{\frac{q_i}{p}}\right]}{S ^{\tilde{\lambda}_i}(R-r)^{\lambda'_i}}\right)^{\mu}\right]}{S  ^{\tilde{\kappa}_i}(R-r)^{\kappa'_i}}\\
		&\le 2^{\mu+i(\tilde{\kappa}_i+\kappa'_i)+1}\frac{\widetilde{C}_{i+1}C_i\left[\left(\mathcal{T}+1\right)^{q_{i+1}}+\left( \int_{T_0}^{T_1}[u(\cdot,t)]^{p}_{W^{s_p,p}\left( B_{R}(x_0)\right)}\mathrm{d}t\right)^{\frac{q_{i+1}}{p}}\right]}{S  ^{\tilde{\kappa}_i+\tilde{\lambda}\mu}(R-r)^{\kappa'_i+\lambda'_i\mu}}.
	\end{align*}
	Hence, we prove $\text{\eqref{14.2}}_{i+1}$ by set $C_{i+1}:=2^{\mu+i(\tilde{\kappa}_i+\kappa'_i)+1}\widetilde{C}_{i+1}C_i$, $\tilde{\lambda}_{i+1}:=\tilde{\kappa}_i+\tilde{\lambda}\mu$ and $\lambda'_{i+1}:=\kappa'_i+\lambda'_i\mu$ and finish our induction argument.
	
	For any $q\in[p,+\infty)$, we choose $i_0\in \mathbb{N}^+$ as the integer that satisfies
	\begin{align*}
		\left( \frac{s_p(p-2)}{s_pp-2}\right)^{i_0}>\frac{q}{p}. 
	\end{align*}
	That is,
	\begin{align*}
		i_0:=\left[ \frac{\ln\frac{q}{p}}{\ln\frac{s_p(p-2)}{s_pp-2}}\right]+1.
	\end{align*}
	We can set $\mu=\left( \frac{q}{p}\right)^\frac{1}{i_0}$ to obtain the desired estimate.
	
	Next, we consider the case $s_p\in \left( \frac{p-1}{p},1\right)$. When $p\in[2,3]$, we can increase the index of differentiability and integrability by applying (ii) in Lemma \ref{lem13} and \eqref{14.1} to get
	\begin{align*}
		\int_{T_0+S}^{T_1}\int_{B_r(x_0)}|\nabla u|^q\,\mathrm{d}x\mathrm{d}t\le \frac{C\left[\left(\mathcal{T}+1\right)^q+\left( \int_{T_0}^{T_1}[u(\cdot,t)]^p_{W^{s_p,p}\left( B_R(x_0)\right)}\mathrm{d}t\right)^\frac{q}{p}\right]}{S^{\theta_2}(R-r)^{\theta_1}}.
	\end{align*}
	
	Finally, we deal with the case $p>3$. We set
	\begin{align*}
		\mu:=\frac{s_pp-s_p-1}{s_p(p-2)}
	\end{align*}
	and apply the inductive method described earlier. For $i=1$, since $\frac{s_pp}{q_1}>\mu^{s_p}_p$, we have
	\begin{align*}
		\int_{T_0+S_1}^{T_1}\int_{B_{\rho_1}(x_0)}|\nabla u|^{q_1}\,\mathrm{d}x\mathrm{d}t&\le \frac{\widetilde{C}_1\left[\left(\mathcal{T}+1\right)^{q_1}+ \int_{T_0}^{T_1}[u(\cdot,t)]^{q_1}_{W^{\frac{s_pp}{q_1},q_1}\left( B_R(x_0)\right)}\mathrm{d}t\right]}{S^{\kappa_4}(R-r)^{\kappa_3}}\\
		&\le \frac{C_1\left[\left(\mathcal{T}+1\right)^{q_1}+\left( \int_{T_0}^{T_1}[u(\cdot,t)]^{p}_{W^{s_p,p}\left( B_R(x_0)\right)}\mathrm{d}t\right)^\frac{q_1}{p}\right]}{S^{\kappa_4}(R-r)^{\kappa_3}}.
	\end{align*}
	
	For $i\in\mathbb{N}^+$, we suppose the following estimate 
	\begin{align*}
		\int_{T_0+S_i}^{T_1}\int_{B_{\rho_i}(x_0)}|\nabla u|^{q_i}\,\mathrm{d}x\mathrm{d}t\le \frac{C_i\left[\left(\mathcal{T}+1\right)^{q_i}+\left( \int_{T_0}^{T_1}[u(\cdot,t)]^{p}_{W^{s_p,p}\left( B_R(x_0)\right)}\mathrm{d}t\right)^\frac{q_i}{p}\right]}{S^{\tilde{\theta}_i}(R-r)^{\theta'_i}}
	\end{align*}
holds. Then, one can utilize Lemma \ref{lem14.1} and (ii) of Lemma \ref{lem13} and \eqref{14.1} to obtain
	\begin{align*}
		&\quad\int_{T_0+S_{i+1}}^{T_1}\int_{B_{\rho_{i+1}}(x_0)}|\nabla u|^{q_{i+1}}\,\mathrm{d}x\mathrm{d}t\\
		&\le \frac{\widetilde{C}_i\left[\left(\mathcal{T}+1\right)^{q_{i+1}}+ \int_{T_0+S_i}^{T_1}[u(\cdot,t)]^{q_{i+1}}_{W^{\frac{s_pq_i}{q_{i+1}},q_{i+1}}\left( B_{\rho_i}(x_0)\right)}\mathrm{d}t\right] }{S^{\tilde{\kappa}_{i+1}}(R-r)^{\kappa'_{i+1}}}\\
		&\le \frac{\widetilde{C}_i\left[\left(\mathcal{T}+1\right)^{q_{i+1}}+ \left( \int_{T_0+S_i}^{T_1}[u(\cdot,t)]^{q_{i}}_{W^{s_p,q_i}\left( B_{\rho_i}(x_0)\right)}\mathrm{d}t\right)^\mu \right] }{S^{\tilde{\kappa}_{i+1}}(R-r)^{\kappa'_{i+1}}}\\
		&\le \frac{\widetilde{C}_i\left[\left(\mathcal{T}+1\right)^{q_{i+1}}+ C(N,s_p,p)\left( \int_{T_0+S_i}^{T_1}\int_{B_{\rho_i}(x_0)}|\nabla u|^{q_i}\,\mathrm{d}x\mathrm{d}t\right)^\mu \right] }{S^{\tilde{\kappa}_{i+1}}(R-r)^{\kappa'_{i+1}}}\\
		&\le \frac{4\widetilde{C}_iC(N,s_p,p)C_i^\mu\left[\left(\mathcal{T}+1\right)^{q_{i+1}}+ \left( \int_{T_0}^{T_1}[u(\cdot,t)]^{p}_{W^{s_p,p}\left( B_{R}(x_0)\right)}\mathrm{d}t\right)^\frac{q_{i+1}}{p} \right] }{S^{\tilde{\kappa}_{i+1}+\tilde{\theta}_i\mu}(R-r)^{\kappa'_{i+1}+\theta'_{i}\mu}},
	\end{align*}
	where we set $C_{i+1}:=4\widetilde{C}_iC(N,s_p,p)C_i^\mu$, $\tilde{\theta}_{i+1}:=\tilde{\kappa}_{i+1}+\tilde{\theta}_i\mu$ and $\theta'_{i+1}:=\kappa'_{i+1}+\theta'_{i}\mu$ to get the desired estimate and finish the induction.
	
	For any $q\ge p$, we choose $i_0\in\mathbb{N}^+$ that satisfies $q_{i_0}\ge q$. Indeed, we can choose
	\begin{align*}
		i_0:=\left[ \frac{\ln\frac{q}{p}}{\ln\left( \frac{s_pp-s_p-1}{s_p(p-2)}\right)}\right]+1. 
	\end{align*}
	Hence, it follows from H\"{o}lder's inequality that
	\begin{align*}
		&\quad\int_{T_0+S}^{T_1}\int_{B_r(x_0)}|\nabla u|^{q}\,\mathrm{d}x\mathrm{d}t\\
		&\le\left( \int_{T_0+S_{i_0}}^{T_1}\int_{B_{\rho_{i_0}}(x_0)}|\nabla u|^{q_{i_0}}\,\mathrm{d}x\mathrm{d}t\right)^\frac{q}{q_{i_0}}\\
		&\le\frac{C_{i_0}^\mu C(N,\mu)\left[\left(\mathcal{T}+1\right)^q+\left( \int_{T_0}^{T_1}[u(\cdot,t)]^p_{W^{s_p,p}\left( B_R(x_0)\right)}\mathrm{d}t\right)^{\frac{q}{p}} \right]}{S^{\tilde{\theta}_{i_0}}(R-r)^{\theta'_{i_0}}}. 
	\end{align*}
This complets the proof.
\end{proof}

\section{Sobolev regularity  for the sub-quadratic case}
\label{sec4}
In this section, we focus on the sub-quadratic case of equation \eqref{1.1}. Due to the structural differences, a direct estimate of the $W^{s_p,p}$-energy of $J_{\frac{q}{p}+1}(\tau_hu)$ is not feasible. Nevertheless, by choosing the exponent $\gamma$ in \eqref{15.0} as $\sigma p$, the resulting energy aligns with that of the super-quadratic case, allowing the estimates to approach the desired level arbitrarily closely in terms of the exponents.

To begin with, we give a lemma adapted from \cite[Lemma 3.2]{BDL2401}, which provides a crucial control over the nonlocal term. However, it should be noted that the information on the step size revealed by this lemma will be masked by the $1$-growth term.

\begin{lemma}
	\label{lem17}
	Let $p\in(1, 2)$ and $s_p\in(0, 1)$. There exists a constant $C:=\frac{\widetilde{C}(N,p)}{s_p}$ such that whenever $u\in L^\infty\bigl( T_0,T_1;L^{p-1}_{s_pp}(\mathbb{R}^N)\bigl)$, $R>0$, $r\in(0,R)$ and $d\in \left( 0,\frac{1}{4}(R-r)\right] $, for any $x\in B_{\frac{1}{2}(R+r)}(x_0)$, $h\in B_d\backslash \left\lbrace 0\right\rbrace $ and $T_1-T_0\le1$, we have
	\begin{align*}
		&\quad\left| \int_{T_0}^{T_1}\int_{\mathbb{R}^N\backslash B_R(x_0)}\frac{J_p\left( u_h(x)-u_h(y)\right)-J_p\left( u(x)-u(y)\right) }{|x-y|^{N+s_pp}}\,\mathrm{d}y\mathrm{d}t\right|\\
		&\le \frac{C}{R^{s_pp}}\left( \frac{R}{R-r}\right)^{N+s_pp+1}\left( \int_{T_0}^{T_1}|\tau_hu(x)|^{p-1}\,\mathrm{d}t+\frac{|h|}{R}\mathcal{T}^{p-1}\right).
	\end{align*}
\end{lemma}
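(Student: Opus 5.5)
The plan is to remove the shift from the outer function by a change of variables, so that the difference splits into three elementary pieces: a Hölder piece, a kernel piece and a boundary-layer piece. Fix $t$ and $x\in B_{\frac12(R+r)}(x_0)$, and suppress $t$. In the first integral we substitute $z=y+h$. Since $u_h(y)=u(z)$ and $|x-y|=|x+h-z|$, we get
\begin{align*}
\int_{\mathbb{R}^N\backslash B_R(x_0)}\frac{J_p(u_h(x)-u_h(y))}{|x-y|^{N+s_pp}}\,\mathrm{d}y=\int_{\mathbb{R}^N\backslash B_R(x_0+h)}\frac{J_p(u(x+h)-u(z))}{|x+h-z|^{N+s_pp}}\,\mathrm{d}z .
\end{align*}
Subtracting the unshifted integral, the difference splits as $D_1+D_2+D_3$:
\begin{itemize}
\item $D_1$ is the integral over $\mathbb{R}^N\backslash B_R(x_0)$ of $\left[J_p(u(x+h)-u(z))-J_p(u(x)-u(z))\right]|x+h-z|^{-N-s_pp}$;
\item $D_2$ is the integral over $\mathbb{R}^N\backslash B_R(x_0)$ of $J_p(u(x)-u(z))\left[|x+h-z|^{-N-s_pp}-|x-z|^{-N-s_pp}\right]$;
\item $D_3$ is the integral of $J_p(u(x+h)-u(z))|x+h-z|^{-N-s_pp}$ against $\chi_{\mathbb{R}^N\backslash B_R(x_0+h)}-\chi_{\mathbb{R}^N\backslash B_R(x_0)}$.
\end{itemize}
For $z\notin B_R(x_0)$ and $|h|<d\le\frac14(R-r)$, we have $|x-z|\ge\frac12(R-r)$ and $|x+h-z|\ge\frac14(R-r)$. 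Moreover $|z-x_0|\le|z-x|+R\le\frac{4R}{R-r}|x-z|$, and similarly with $x+h$ in place of $x$. These comparisons are what generate the factors $\left(\frac{R}{R-r}\right)^{N+s_pp+\cdot}$ in the statement.

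For $D_1$ we use that $p\in(1,2)$, so $J_p$ is $(p-1)$-Hölder: $|J_p(a)-J_p(b)|\le C(p)|a-b|^{p-1}$. This is the step that genuinely uses sub-quadraticity. It gives
\begin{align*}
|D_1|\le C|\tau_hu(x)|^{p-1}\int_{|w|\ge\frac14(R-r)}\frac{\mathrm{d}w}{|w|^{N+s_pp}}\le\frac{C(N,p)}{s_p(R-r)^{s_pp}}|\tau_hu(x)|^{p-1}.
\end{align*}
Here $(R-r)^{-s_pp}$ is dominated by $R^{-s_pp}\left(\frac{R}{R-r}\right)^{N+s_pp+1}$.

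For $D_2$ we apply the mean value theorem to the kernel: $\left||x+h-z|^{-N-s_pp}-|x-z|^{-N-s_pp}\right|\le C(N)|h||x-z|^{-N-s_pp-1}$. We bound $|J_p(u(x)-u(z))|\le|u(x)|^{p-1}+|u(z)|^{p-1}$. The $|u(x)|^{p-1}$ part gives $C|h|\,\|u\|_{L^\infty}^{p-1}(R-r)^{-s_pp-1}$. For the $|u(z)|^{p-1}$ part we convert $|x-z|$ into $|z-x_0|$, then use $|z-x_0|\ge R$ to absorb the extra power, and obtain
\begin{align*}
C|h|\left(\frac{4R}{R-r}\right)^{N+s_pp+1}\frac{1}{R}\int_{\mathbb{R}^N\backslash B_R(x_0)}\frac{|u(z)|^{p-1}}{|z-x_0|^{N+s_pp}}\,\mathrm{d}z=\frac{C|h|}{R^{1+s_pp}}\left(\frac{R}{R-r}\right)^{N+s_pp+1}\mathrm{Tail}(u;x_0,R,t)^{p-1}.
\end{align*}

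For $D_3$, the symmetric difference $B_R(x_0)\triangle B_R(x_0+h)$ lies in the annulus $B_{R+|h|}(x_0)\backslash B_{R-|h|}(x_0)$, whose measure is at most $C(N)R^{N-1}|h|$. This set lies inside $B_{R+d}(x_0)$, so $u$ is bounded there by the local $L^\infty$ norm built into $\mathcal{T}$ (as in $\mathcal{T}_{R+d}$ of Proposition \ref{pro4}). The kernel is at most $C(R-r)^{-N-s_pp}$ on this set. Hence
\begin{align*}
|D_3|\le C\mathcal{T}^{p-1}R^{N-1}|h|(R-r)^{-N-s_pp}\le\frac{C|h|}{R^{1+s_pp}}\left(\frac{R}{R-r}\right)^{N+s_pp}\mathcal{T}^{p-1}.
\end{align*}

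Finally we integrate in $t\in(T_0,T_1)$. The $D_1$ contribution yields $\int_{T_0}^{T_1}|\tau_hu(x)|^{p-1}\,\mathrm{d}t$. In the $D_2$ and $D_3$ contributions we bound the tail by its essential supremum and use $T_1-T_0\le1$, which gives the term $\frac{|h|}{R}\mathcal{T}^{p-1}$. Collecting the constants gives $\widetilde C(N,p)/s_p$.

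The main obstacle is bookkeeping rather than any single estimate. One must check that all three pieces carry at worst the factor $R^{-s_pp}\left(\frac{R}{R-r}\right)^{N+s_pp+1}$, in particular the conversion between $|x-z|$ and $|z-x_0|$ in $D_2$. One must also make sure that the $L^\infty$ bound used in $D_3$ on $B_{R+d}$ is covered by $\mathcal{T}$; if not, we would enlarge the ball in the definition of $\mathcal{T}$ via Lemma \ref{lem8}.
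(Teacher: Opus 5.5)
The paper does not prove this lemma; it imports it without argument from Lemma 3.2 of B\"{o}gelein et al. Your proof works as a self-contained argument, and I find it correct. After the substitution $z=y+h$, the three pieces give exactly the right-hand side:
\begin{itemize}
\item $D_1$ is handled by the $(p-1)$-H\"{o}lder continuity of $J_p$, which is exactly where $p<2$ is used.
\item $D_2$ follows from the mean value theorem on the kernel, together with $|z-x_0|\le\frac{4R}{R-r}|x-z|$ and $|z-x_0|\ge R$.
\item $D_3$ lives on $B_R(x_0)\triangle B_R(x_0+h)$, whose measure is at most $C(N)R^{N-1}|h|$.
\end{itemize}
The factor $1/s_p$ enters only through $\int_{|w|\ge\frac14(R-r)}|w|^{-N-s_pp}\,\mathrm{d}w$ in $D_1$.

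Your closing caveat about $\mathcal{T}$ needs sharpening, because it is not mere bookkeeping. In $D_3$ you genuinely need $\|u\|_{L^\infty(B_{R+d}(x_0)\times(T_0,T_1))}$, so $\mathcal{T}$ must be read as $\mathcal{T}_{R+d}$ from Proposition~\ref{pro4}. With the $\mathcal{T}$ of the introduction, which is built on $B_R(x_0)$, the inequality is false. Take a $t$-independent $u$ equal to $0$ on $B_R(x_0)$, equal to $M$ on the shell $B_{R+\epsilon}(x_0)\setminus B_R(x_0)$, and zero elsewhere, with $|h|<\epsilon$. Then $\tau_hu(x)=0$, and the lens $B_R(x_0+h)\setminus B_R(x_0)$ in your $D_3$ makes the left-hand side of order $M^{p-1}|h|$. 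The right-hand side, however, is only of order $M^{p-1}|h|\epsilon$ for fixed $R,r$, since then $\mathcal{T}^{p-1}$ is just the tail, of order $M^{p-1}\epsilon$.

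Lemma~\ref{lem8} cannot repair this. It only trades the radius-$R$ tail in your $D_2$ for $\mathrm{Tail}(u;x_0,R+d,t)+\|u\|_{L^\infty(B_{R+d}(x_0))}$; it never bounds $u$ on $B_{R+d}(x_0)\setminus B_R(x_0)$. So the lemma should be stated with the $B_{R+d}$-based quantity. This is harmless where it is used, since Proposition~\ref{pro15} assumes $B_{R+d}(x_0)\times(T_0,T_1)\subset\subset\Omega\times I$ and that $u$ is locally bounded.
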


The overall proof strategy parallels that employed in Section \ref{sec3}, but the presence of additional parameters modifies the form of the resulting constants. Importantly, this does not change the dependence of these constants on the exponents in the equation.

\begin{proposition}
	\label{pro15}
	Let $p\in (1, 2)$, $s_1,s_p\in(0, 1)$, $q\ge p$ and $\sigma\in(0,1)$. Suppose $u\in L^q_{\mathrm{loc}}\bigl(I;W^{\sigma,q}_{\mathrm{loc}}(\Omega)\bigl)$ is a locally bounded weak solution to problem \eqref{1.1} in the sense of Definition \ref{def1}. Then, For any $r\in(0,R)$ with $R\in(0,1)$, $d\in\left(0,\frac{1}{4}(R-r)\right]$, $B_{R+d}(x_0)\times(T_0,T_1)\subset\subset\Omega\times I$ with $S\in\left( 0,\min\left\lbrace T_1-T_0,1\right\rbrace \right)$ and $\gamma\in(0,\sigma p)$, there exists a constant $C:=C(N,p,q,s_1,s_p,\sigma,\gamma)$ such that
	\begin{align}
		\label{15.0}
		&\quad\int_{T_0+S}^{T_1}\int_{B_r(x_0)}\int_{B_r(x_0)}\frac{\left|J_{\frac{q}{p}+1}(\tau_hu)(x,t)-J_{\frac{q}{p}+1}(\tau_hu)(y,t)\right|^p}{|x-y|^{N+\frac{s_pp^2}{2}+\gamma\left(1-\frac{p}{2}\right)}}\,\mathrm{d}x\mathrm{d}y\mathrm{d}t\nonumber\\
		&\le \left(\frac{C}{S(R-r)^{N+s_pp+1}}\right)^\frac{p}{2}\left[ \left( \int_{T_0}^{T_1}[u(\cdot,t)]^q_{W^{\sigma,q}(B_R(x_0))}\,\mathrm{d}t\right)^{\frac{p}{q}} \left(\int_{T_0}^{T_1}\int_{B_r(x_0)}|\tau_hu|^q\,\mathrm{d}x\mathrm{d}t \right)^{\frac{q-p}{q}}\right]^{1-\frac{p}{2}}\nonumber\\
		&\quad\times\left(\int_{T_0}^{T_1}\int_{B_R(x_0)}|\tau_hu|^{q-p+1}\,\mathrm{d}x\mathrm{d}t+\int_{T_0}^{T_1}\int_{B_R(x_0)}|\tau_hu|^{q}\,\mathrm{d}x\mathrm{d}t\right.\nonumber\\
		&\quad\left.+|h|\mathcal{T}^{p-1}\int_{T_0}^{T_1}\int_{B_R(x_0)}|\tau_hu|^{q}\,\mathrm{d}x\mathrm{d}t+\int_{T_0}^{T_1}\int_{B_R(x_0)}|\tau_hu|^{q-p+2}\,\mathrm{d}x\mathrm{d}t\right)^\frac{p}{2}.
	\end{align}
\end{proposition}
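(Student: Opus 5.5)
Starting from Lemma \ref{lem3} with $\delta:=q-p+1$, I take a cut-off $\eta$ with $\eta\equiv1$ on $B_r(x_0)$, $\operatorname{supp}\eta\subset B_{\frac12(R+r)}(x_0)$, $|\nabla\eta|\le C/(R-r)$, and $\xi$ as before. As in the proof of Proposition \ref{pro4}, I split both nonlocal terms into $B_R\times B_R$ parts and tail parts, i.e. $I_1+2I_2$ and $P_1+2P_2$.

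The $1$-growth terms are handled exactly as before. The symmetric piece $I_{1,1}\ge0$ because $Z\in\mathrm{sgn}(u(x)-u(y))$ and $J_{q-p+2}$ is monotone. The piece containing $\eta^p(x)-\eta^p(y)$ and the tail $I_2$ are bounded by $C(R-r)^{-1}\int\!\!\int|\tau_hu|^{q-p+1}$ using $|Z_h-Z|\le2$. The time term gives $\frac CS\int\!\!\int|\tau_hu|^{q-p+2}$. The $p$-tail $P_2$ is controlled by Lemma \ref{lem17} after multiplying by $|\tau_hu|^{q-p+1}\eta^p$ and integrating in $x$, with Young's inequality turning $|\tau_hu|^{p-1}|\tau_hu|^{q-p+1}$ into $|\tau_hu|^q$. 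This yields $C(R-r)^{-(N+s_pp+1)}\bigl(\int\!\!\int|\tau_hu|^q+|h|\mathcal T^{p-1}\int\!\!\int|\tau_hu|^{q-p+1}\bigr)$, which is the source of the $|h|\mathcal T^{p-1}$ term in \eqref{15.0}.

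For the local $p$-term $P_1$, Lemma \ref{lem16} with $a=u_h(x),b=u_h(y),c=u(x),d=u(y)$, $e=\eta(x),f=\eta(y)$ gives
\begin{align*}
P_1&\ge c\int_{T_0}^{T_1}\!\!\int\!\!\int \frac{(|u_h(x)-u_h(y)|+|u(x)-u(y)|)^{p-2}(|\tau_hu(x)|+|\tau_hu(y)|)^{q-p}|\tau_hu(x)-\tau_hu(y)|^2(\eta^p(x)+\eta^p(y))\xi}{|x-y|^{N+s_pp}}\\
&\quad-C\int_{T_0}^{T_1}\!\!\int\!\!\int\frac{(|\tau_hu(x)|+|\tau_hu(y)|)^{q}|\eta(x)-\eta(y)|^p}{|x-y|^{N+s_pp}}.
\end{align*}
The negative term is bounded by $C(R-r)^{-p}\int\!\!\int|\tau_hu|^q$. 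Collecting everything, the weighted quadratic energy $\mathcal E$ (the first integral above, over $B_r\times B_r\times(T_0+S,T_1)$) is at most $\frac{C}{S(R-r)^{N+s_pp+1}}$ times the last bracket in \eqref{15.0}.

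To pass from $\mathcal E$ to the $W^{\cdot,p}$-energy of $J_{\frac qp+1}(\tau_hu)$, I use Lemma \ref{lem9}, which gives $|J_{\frac qp+1}(\tau_hu)(x)-J_{\frac qp+1}(\tau_hu)(y)|^p\le C(|\tau_hu(x)|+|\tau_hu(y)|)^{q-p}|\tau_hu(x)-\tau_hu(y)|^p$. I then write $|\tau_hu(x)-\tau_hu(y)|^p$ as the quadratic factor to the power $\frac p2$ times $(|u_h(x)-u_h(y)|+|u(x)-u(y)|)^{\frac{p(2-p)}{2}}$, and split $|x-y|^{-N-\frac{s_pp^2}{2}-\gamma(1-\frac p2)}$ accordingly. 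Hölder with exponents $\frac2p$ and $\frac2{2-p}$ then bounds the left side of \eqref{15.0} by $\mathcal E^{p/2}$ times
\begin{align*}
\Bigl(\int\!\!\int\!\!\int\frac{(|\tau_hu(x)|+|\tau_hu(y)|)^{q-p}(|u_h(x)-u_h(y)|+|u(x)-u(y)|)^{p}}{|x-y|^{N+\gamma}}\Bigr)^{1-\frac p2}.
\end{align*}
A further Hölder with exponents $\frac qp,\frac q{q-p}$ bounds this by $\bigl(\int[u]^q_{W^{\gamma/p,q}}\bigr)^{p/q}\bigl(\int\!\!\int|\tau_hu|^q\bigr)^{(q-p)/q}$, up to a constant. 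Since $\gamma<\sigma p$, the $W^{\gamma/p,q}$ seminorm is dominated by the $W^{\sigma,q}$ seminorm on the ball with $R<1$, at a cost of $C(\sigma-\gamma/p)^{-1}$. This reproduces exactly the factor structure of \eqref{15.0}.

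The main obstacle is this last splitting step. The exponent bookkeeping must match $\frac{s_pp^2}{2}+\gamma(1-\frac p2)$ exactly: one needs $(N+s_pp)\frac p2+(N+\gamma)(1-\frac p2)=N+\frac{s_pp^2}{2}+\gamma(1-\frac p2)$, which indeed holds. One must also keep the $\eta^p$ weight attached only to the $\mathcal E$ factor, which is harmless since $\eta\equiv1$ on $B_r$. The second point is checking that the $(|\tau_hu|)^{q-p}$ weights split compatibly across the two Hölder factors.
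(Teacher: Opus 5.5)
Your proof follows the paper's route step for step: Lemma \ref{lem16} for $P_1$, Lemma \ref{lem17} for $P_2$, the $I_{1,1}\ge0$, $I_{1,2}$ and $I_2$ bounds, and Lemma \ref{lem9} plus H\"older with exponents $\frac2p,\frac2{2-p}$ to reach $\mathcal E^{p/2}$ times the $|x-y|^{-N-\gamma}$ factor. That first H\"older step and its exponent bookkeeping are correct. The step that fails is the second H\"older inequality (for $q>p$). If the $u$-factor is to be $\bigl(\int[u]^q_{W^{\gamma/p,q}}\bigr)^{p/q}$, the kernel must be split as
\[
|x-y|^{-(N+\gamma)}=\bigl(|x-y|^{-(N+\frac{\gamma}{p}q)}\bigr)^{\frac pq}\bigl(|x-y|^{-N}\bigr)^{\frac{q-p}{q}}.
\]
Then the remaining factor is $\int\!\!\int\!\!\int(|\tau_hu(x)|+|\tau_hu(y)|)^q|x-y|^{-N}$. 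This is $+\infty$ unless $\tau_hu\equiv0$, because $\int_{B_r}|x-y|^{-N}\,\mathrm{d}y$ diverges logarithmically. So the claimed bound by $\int\!\!\int|\tau_hu|^q$ ``up to a constant'' is false, and $\frac{\gamma}{p}$ is exactly the borderline index where it breaks. You have also misplaced where $\gamma<\sigma p$ enters. On a ball of radius $R<1$ one has $[u]_{W^{\gamma/p,q}(B_R)}\le(2R)^{\sigma-\gamma/p}[u]_{W^{\sigma,q}(B_R)}$, with no blow-up at all.

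The repair, which is what the paper does, is to put the whole $W^{\sigma,q}$ kernel on the $u$-factor:
\[
|x-y|^{-(N+\gamma)}=\bigl(|x-y|^{-(N+\sigma q)}\bigr)^{\frac pq}\bigl(|x-y|^{-N+(\sigma p-\gamma)\frac{q}{q-p}}\bigr)^{\frac{q-p}{q}}.
\]
The first factor is then controlled by $\int[u]^q_{W^{\sigma,q}(B_R)}$, using $B_r+h\subset B_R$ for the $u_h$ part. The second factor has an integrable kernel precisely because $\gamma<\sigma p$. Integrating in $y$ gives $C(\sigma p-\gamma)^{-1}\int\!\!\int|\tau_hu|^q$, and hence the constant $(\sigma p-\gamma)^{-\frac{q-p}{q}}$ of Remark \ref{rem15.1}. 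Any index in $(\frac{\gamma}{p},\sigma]$ would work; only $\frac{\gamma}{p}$ itself fails.

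A minor point: the $|h|$-term you get from Lemma \ref{lem17} carries $|\tau_hu|^{q-p+1}$, not $|\tau_hu|^{q}$. This agrees with the paper's own \eqref{15.4} and its later use in \eqref{18.2}, so the $|\tau_hu|^q$ printed in that slot of \eqref{15.0} is a typo rather than something your argument needs to produce.
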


\begin{proof}
	Thanks to the proof of Proposition \ref{pro4}, we begin with \eqref{3.0} and utilize Lemma \ref{lem16} to deal with $p$-structure as follows
	\begin{align}
		\label{15.1}
		P_1\ge \frac{p-1}{2^{q-p+2}}P_3-\left( \frac{2^{q-p+2}}{p-1}\right)^{p-1}P_4 
	\end{align}
	and
	\begin{align*}
		|P_2|&\le \int_{T_0}^{T_1}\int_{B_\frac{R+r}{2}(x_0)}\left|\int_{\mathbb{R}^N\backslash B_R(x_0)}\frac{J_{p}(u_h(x,t)-u_h(y,t))-J_p(u(x,t)-u(y,t))}{|x-y|^{N+s_pp}}\,\mathrm{d}y\right|\\
		&\qquad\qquad\qquad\qquad\times\left|J_{q-p+2}(\tau_hu)(x,t)\right|\eta^p(x)\,\mathrm{d}x\mathrm{d}t, 
	\end{align*}
	where $P_1$ and $P_2$ are given in \eqref{4.01}, $P_3$ and $P_4$ are defined as
	\begin{align*}
		P_3&:=\int_{T_0}^{T_1}\int_{B_R(x_0)}\int_{B_R(x_0)}\frac{\left( |u_h(x,t)-u_h(y,t)|+|u(x,t)-u(y,t)|\right)^{p-2}\left( \eta^p(x)+\eta^p(y)\right)}{|x-y|^{N+s_pp}}\\
		&\qquad\qquad\qquad\qquad\qquad\times\left( |\tau_hu(x,t)|+|\tau_hu(y,t)|\right)^{q-p}|\tau_hu(x,t)-\tau_hu(y,t)|^2\xi(t)\,\mathrm{d}x\mathrm{d}y\mathrm{d}t,
	\end{align*}
	and
	\begin{align*}
		P_4:=\int_{T_0}^{T_1}\int_{B_R(x_0)}\int_{B_R(x_0)}\frac{\left( |\tau_hu(x,t)|+|\tau_hu(y,t)|\right)^{q}|\eta(x)-\eta(y)|^p}{|x-y|^{N+s_pp}}\,\mathrm{d}x\mathrm{d}y\mathrm{d}t.
	\end{align*}
	By applying Lemma \ref{lem9} and H\"{o}lder's inequality with exponents $ \frac{2}{p}$ and $\frac{2}{2-p} $, we have
	\begin{align}
		\label{15.2}
		&\quad\int_{T_0+S}^{T_1}\int_{B_r(x_0)}\int_{B_r(x_0)}\frac{\left|J_{\frac{q}{p}+1}(\tau_hu)(x,t)-J_{\frac{q}{p}+1}(\tau_hu)(y,t)\right|^p}{|x-y|^{N+\frac{s_pp^2}{2}+\gamma\left(1-\frac{p}{2}\right)}}\,\mathrm{d}x\mathrm{d}y\mathrm{d}t\nonumber\\
		&\le C(p,q)\int_{T_0+S}^{T_1}\int_{B_r(x_0)}\int_{B_r(x_0)}\frac{\left( |\tau_hu(x,t)|+|\tau_hu(y,t)|\right)^{q-p}|\tau_hu(x,t)-\tau_hu(y,t)|^p}{|x-y|^{N+\frac{s_pp^2}{2}+\gamma\left(1-\frac{p}{2}\right)}}\,\mathrm{d}x\mathrm{d}y\mathrm{d}t\nonumber\\
		&\le C(p,q)P_5^{1-\frac{p}{2}}P_3^{\frac{p}{2}},
	\end{align}
	where
	\begin{align*}
		P_5&:=\int_{T_0}^{T_1}\int_{B_r(x_0)}\int_{B_r(x_0)}\frac{\left( |u_h(x,t)-u_h(y,t)|+|u(x,t)-u(y,t)|\right)^p}{|x-y|^{N+\gamma}}\\
		&\qquad\qquad\qquad\qquad\qquad\times\left( |\tau_hu(x,t)|+|\tau_hu(y,t)|\right)^{q-p}\,\mathrm{d}x\mathrm{d}y\mathrm{d}t.
	\end{align*}
	
	We use H\"{o}lder's inequality again on $P_5$ with exponents $\frac{q}{p}$ and $\frac{q}{q-p}$ to obtain
	\begin{align}
		\label{15.3}
		P_5&\le\left( \int_{T_0}^{T_1}\int_{B_r(x_0)}\int_{B_r(x_0)}\frac{\left( |u_h(x,t)-u_h(y,t)|+|u(x,t)-u(y,t)|\right)^q}{|x-y|^{N+\sigma q}}\,\mathrm{d}x\mathrm{d}y\mathrm{d}t\right)^{\frac{p}{q}}\nonumber\\
		&\quad\times\left( \int_{T_0}^{T_1}\int_{B_r(x_0)}\int_{B_r(x_0)}\frac{\left( |\tau_hu(x,t)|+|\tau_hu(y,t)|\right)^{q}}{|x-y|^{N+(\gamma-\sigma q)\frac{q}{q-p}}}\,\mathrm{d}x\mathrm{d}y\mathrm{d}t\right)^\frac{q-p}{q}\nonumber\\
		&\le C(p,q)\left( \int_{T_0}^{T_1}\int_0^{2R}\frac{\mathrm{d}\rho}{\rho^{(\gamma-\sigma p)\frac{q}{q-p}+1}}\int_{B_r(x_0)}|\tau_hu(x,t)|^q\,\mathrm{d}x\mathrm{d}t\right)^\frac{q-p}{q}\nonumber\\
		&\qquad\qquad\times\left( \int_{T_0}^{T_1}[u(\cdot,t)]^q_{W^{\sigma,q}(B_R(x_0))}\,\mathrm{d}t\right)^{\frac{p}{q}}\nonumber\\
		&\le \frac{C(p,q)}{(\sigma p-\gamma)^\frac{q-p}{q}}\left( \int_{T_0}^{T_1}[u(\cdot,t)]^q_{W^{\sigma,q}(B_R(x_0))}\,\mathrm{d}t\right)^{\frac{p}{q}}\left(\int_{T_0}^{T_1}\int_{B_r(x_0)}|\tau_hu(x,t)|^q\,\mathrm{d}x\mathrm{d}t \right)^\frac{q-p}{q}.
	\end{align}
	For the nonlocal term $P_2$, Lemma \ref{lem17} implies
	\begin{align}
		\label{15.4}
		|P_2|\le \frac{C}{(R-r)^{N+s_pp+1}}\left( \int_{T_0}^{T_1}\int_{B_\frac{R+r}{2}(x_0)}|\tau_hu|^q\,\mathrm{d}x\mathrm{d}t+|h|\mathcal{T}\int_{T_0}^{T_1}\int_{B_R(x_0)}|\tau_hu|^{q-p+1}\,\mathrm{d}x\mathrm{d}t\right).
	\end{align}
	Moreover, one has the estimate
	\begin{align}
		\label{15.5}
		|P_4|&\le C(q)\int_{T_0}^{T_1}\int_{0}^{2R}\frac{\mathrm{d}\rho}{\rho^{-(1-s_p)p+1}}\int_{B_R(x_0)}|\tau_hu|^q\,\mathrm{d}x\mathrm{d}t\nonumber\\
		&\le \frac{C(N,p,q)}{(1-s_p)(R-r)^p}\int_{T_0}^{T_1}\int_{B_R(x_0)}|\tau_hu|^q\,\mathrm{d}x\mathrm{d}t,
	\end{align}
	and
	\begin{align}
		\label{15.6}
		\int_{T_0}^{T_1}\int_{B_R(x_0)}\frac{|\tau_hu|^{q-p+2}}{|q-p+2|}\eta(x)\xi'(t)\,\mathrm{d}x\mathrm{d}t\le\frac{C}{S}\int_{T_0}^{T_1}\int_{B_R(x_0)}|\tau_hu|^{q-p+2}\,\mathrm{d}x\mathrm{d}t.
	\end{align}
	
By substituting \eqref{15.1}–\eqref{15.6} and \eqref{4.1}–\eqref{4.3} into inequality \eqref{3.0}, we arrive at the desired result.
\end{proof}

\begin{remark}
	\label{rem15.1}
	The constant $C$ in \eqref{15.0} takes the form of
	\begin{align*}
		C=\frac{C(N,p,q,s_1,s_p)}{(\sigma p-\gamma)^\frac{q-p}{q}}.
	\end{align*}
\end{remark}

The following two lemmas will play the same role as Lemmas \ref{lem6} and \ref{lem12} in the iteration scheme.

\begin{lemma}
	\label{lem18}
	Let $p\in(1,2)$, $s_p\in\left( 0,\frac{p-1}{p}\right]$, $q\ge p$ and suppose that $u\in L^q_{\mathrm{loc}}\bigl(I;W^{\sigma,q}_{\mathrm{loc}}(\Omega)\bigl)$ is a locally bounded weak solution to problem \eqref{1.1} in the sense of Definition \ref{def1}. Then, we have
	\begin{align*}
		u\in L^q_{\mathrm{loc}}\bigl(I;W^{\alpha,q}_{\mathrm{loc}}(\Omega)\bigl)
	\end{align*}
	for any $\alpha\in(\sigma,\beta)$ with
	\begin{align*}
		\beta:=\sigma\left(1-\frac{p^2}{2q}+\frac{p}{2q}\right)+\frac{s_pp^2}{2q}. 
	\end{align*}
	Moreover, there exists a constant $C$ depending on $N,p,q,s_1,s_p,\sigma$ and $\alpha$ such that for any ball $B_R(x_0)\subset\subset\Omega$ with $R\in(0,1)$, $r\in (0,R)$, $(T_0,T_1)\subset\subset I$ and $S\in \left( 0,\min\left\lbrace T_1-T_0,1\right\rbrace \right) $,
	\begin{align*}
		\int_{T_0+S}^{T_1}[u(\cdot,t)]^q_{W^{\alpha,q}\left( B_r(x_0)\right)}\mathrm{d}t\le\frac{C\left[\left(\mathcal{T}+1\right)^q+\int_{T_0}^{T_1}[u(\cdot,t)]^q_{W^{\sigma,q}\left( B_R(x_0)\right)}\mathrm{d}t\right]}{S(R-r)^{N+2q+1}}.
	\end{align*}
\end{lemma}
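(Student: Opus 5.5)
The plan is to mirror the proof of Lemma \ref{lem6}, with Proposition \ref{pro15} in place of Proposition \ref{pro4}, choosing the free exponent $\gamma\in(0,\sigma p)$ in \eqref{15.0} close to $\sigma p$. Fix $\alpha\in(\sigma,\beta)$ and set $d:=\frac17(R-r)$. First, Lemma \ref{lem7} together with H\"older's inequality gives, for every exponent $m\in\{q-p+1,\,q-p+2,\,q\}$,
\begin{align*}
\int_{T_0}^{T_1}\int_{B_{R-d}(x_0)}|\tau_hu|^m\,\mathrm{d}x\mathrm{d}t\le C\left[\frac{|h|^{\sigma q}}{(R-r)^{q}}\left(\int_{T_0}^{T_1}[u(\cdot,t)]^q_{W^{\sigma,q}(B_R(x_0))}\,\mathrm{d}t+(\mathcal{T}+1)^q\right)\right]^{\frac mq}.
\end{align*}
Lemma \ref{lem8} controls the $L^\infty$ norm and tail on $B_{R-d}(x_0)$ by $C\mathcal{T}$. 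I then apply \eqref{15.0} with $(r,R)$ replaced by $(r+d,R-d)$. Because $|h|<1$, the smallest power of $|h|$ in each factor on the right dominates.
\begin{itemize}
\item The first factor $[\dots]^{1-\frac p2}$ contributes $|h|^{\sigma(q-p)(1-\frac p2)}$.
\item The second factor $(\dots)^{\frac p2}$ is dominated by the $|\tau_hu|^{q-p+1}$ term, which is the one coming from the $1$-growth part. It contributes $|h|^{\sigma(q-p+1)\frac p2}$.
\end{itemize}
The total power is $|h|^{\sigma(q-\frac p2)}$, multiplied by a constant of the form $C[(\mathcal{T}+1)^q+\int[u]^q_{W^{\sigma,q}}]/(S(R-r)^{\kappa})$. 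Here I use Young's inequality to merge the powers of $\mathcal{T}$ and of the $W^{\sigma,q}$ energy into this single bracket.

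Next, Lemma \ref{lem7} is applied in the form used in \eqref{6.2}, with fractional order $s':=\frac{s_pp}{2}+\frac{\gamma}{p}(1-\frac p2)$ and exponent $p$, to the function $J_{\frac qp+1}(\tau_hu)$. This converts the left-hand side of \eqref{15.0} into a bound on $\int\!\int_{B_r}|\tau_\lambda(J_{\frac qp+1}(\tau_hu))|^p$ with a factor $|\lambda|^{s'p}$. A lower-order term $\int|\tau_hu|^q$ also appears, and it is absorbed by the previous bound. Then I set $\lambda=h$ and use Lemma \ref{lem9}, that is $|J_{\frac qp+1}(a)-J_{\frac qp+1}(b)|^p\ge c|a-b|^q$, to obtain
\begin{align*}
\int_{T_0+S}^{T_1}\int_{B_r(x_0)}|\tau_h(\tau_hu)|^q\,\mathrm{d}x\mathrm{d}t\le \frac{C\left[(\mathcal{T}+1)^q+\int_{T_0}^{T_1}[u(\cdot,t)]^q_{W^{\sigma,q}(B_R(x_0))}\mathrm{d}t\right]}{S(R-r)^{N+2q+1}}\,|h|^{\frac{s_pp^2}{2}+\frac{\gamma(2-p)}{2}+\sigma(q-\frac p2)}.
\end{align*}
Letting $\gamma\uparrow\sigma p$, the exponent divided by $q$ tends to $\sigma(1-\frac{p^2}{2q}+\frac p{2q})+\frac{s_pp^2}{2q}=\beta$. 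So I fix $\gamma<\sigma p$ so close to $\sigma p$ that the exponent divided by $q$, call it $\tilde\beta$, satisfies $\tilde\beta>\alpha$; this choice depends only on $\alpha,\sigma,p,q,s_p$. By Remark \ref{rem15.1}, the constant then depends on $(\sigma p-\gamma)^{-1}$ and hence only on the admissible data.

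Finally, Lemma \ref{lem10} passes from second differences to first differences. If $\tilde\beta<1$, this gives $\int\!\int_{B_r}|\tau_hu|^q\lesssim |h|^{\tilde\beta q}$. If $\tilde\beta\ge1$, the cases $\gamma\ge1$ of Lemma \ref{lem10} give the exponent $\lambda q$ for any $\lambda<1$, and $\alpha<1$ is still reachable. Lemma \ref{lem11} with $\alpha<\tilde\beta$ then yields the stated $W^{\alpha,q}$ estimate. The $L^q$ term is bounded by $\|u\|_{L^\infty}^q\le \mathcal{T}^q$, and the powers of $(R-r)$ are collected as in Lemma \ref{lem6}.

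The main obstacle I expect is the bookkeeping of $|h|$-powers and constants on the right-hand side of \eqref{15.0}. The product structure with the powers $1-\frac p2$ and $\frac p2$ has to be reduced to a single term proportional to the data bracket, and the $1$-growth term $|\tau_hu|^{q-p+1}$ has to be identified correctly as the dominant one, so that exactly the exponent $\beta$ comes out. A secondary delicate point is the dependence on $\sigma p-\gamma$ in Remark \ref{rem15.1}, which must stay controlled once $\gamma$ is fixed in terms of $\alpha$.
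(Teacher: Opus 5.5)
Your proposal follows the paper's proof of Lemma \ref{lem18} step for step. Your first two steps are \eqref{18.1}--\eqref{18.2}: Proposition \ref{pro15} at radii $(r+d,R-d)$, with the $|\tau_hu|^{q-p+1}$ term dominating and total power $|h|^{\sigma(q-\frac p2)}$. Your passage through Lemma \ref{lem7} applied to $J_{\frac qp+1}(\tau_hu)$ with $\lambda=h$, then Lemma \ref{lem9}, Lemma \ref{lem10} and Lemma \ref{lem11}, is exactly the ``method of Lemma \ref{lem6}'' the paper invokes. Your choice of $\gamma<\sigma p$ close to $\sigma p$ is actually safer than the paper's explicit formula for $\gamma$. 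That formula is negative when $\sigma(3-p)<s_pp$ and $\alpha$ is close to $\sigma$, which is a situation that does occur in the iteration of Lemma \ref{lem20}.

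Two bookkeeping points need correcting.

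First, for $m=q-p+2>q$, H\"older's inequality runs the wrong way. Use $|\tau_hu|\le 2\mathcal{T}$ on $B_{R-d}(x_0)$ for $|h|<d$ instead:
\[
\int_{T_0}^{T_1}\int_{B_{R-d}(x_0)}|\tau_hu|^{q-p+2}\,\mathrm{d}x\mathrm{d}t\le (2\mathcal{T})^{2-p}\int_{T_0}^{T_1}\int_{B_{R-d}(x_0)}|\tau_hu|^{q}\,\mathrm{d}x\mathrm{d}t .
\]
This yields only $|h|^{\sigma q}$, not $|h|^{\sigma(q-p+2)}$. Since $\sigma q>\sigma(q-p+1)$, the exponent $\sigma(q-\frac p2)$ is unaffected.

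Second, your sentence that Young's inequality merges everything into the single bracket $(\mathcal{T}+1)^q+\int_{T_0}^{T_1}[u(\cdot,t)]^q_{W^{\sigma,q}(B_R(x_0))}\,\mathrm{d}t$ fails for this same term.
\begin{itemize}
\item The time-derivative contribution $\frac1S\int\!\!\int|\tau_hu|^{q-p+2}$ is homogeneous of degree $q-p+2>q$ in $u$.
\item Young's inequality, even with the $+1$, can only absorb terms of degree at most $q$ into that bracket.
\item By the display above, this term is bounded by $\mathcal{T}^{2-p}|h|^{\sigma q}$ times the bracket. Raised to the power $\frac p2$ and multiplied by the first factor, which has degree $q(1-\frac p2)$, it gives $(\mathcal{T}+1)^{\frac{p(2-p)}{2}}$ times the bracket.
\end{itemize}
So what your argument actually proves is the stated estimate with an extra factor $(\mathcal{T}+1)^{\frac{p(2-p)}{2}}$ on the right-hand side. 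The paper's final step in \eqref{18.2}, which asserts $\widetilde{\mathcal{K}}^q$, makes the same reduction. This is therefore a defect in the quantitative form of the statement, not in your route. The qualitative conclusion $u\in L^q_{\mathrm{loc}}\bigl(I;W^{\alpha,q}_{\mathrm{loc}}(\Omega)\bigr)$ for every $\alpha\in(\sigma,\beta)$ is unaffected.
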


\begin{proof}
	We first deal with the right-hand side of \eqref{15.0}. By applying Lemma \ref{lem7}, H\"{o}lder's inequality, Young's inequality and note that $u$ is locally bounded in $\Omega_I$, we have, for $r\in\left\lbrace q-p+1,q-p\right\rbrace$ and $m=q-p+2$,
	\begin{align}
		\label{18.1}
		&\quad\int_{T_0}^{T_1}\int_{B_{R-d}(x_0)}|\tau_hu|^r(x,t)\,\mathrm{d}x\mathrm{d}t\nonumber\\
		&\le \left( \int_{T_0}^{T_1}\int_{B_{R-d}(x_0)}|\tau_hu|^q(x,t)\,\mathrm{d}x\mathrm{d}t\right)^\frac{r}{q} \nonumber\\
		&\le \left[C|h|^{\sigma q}\left((1-\sigma)\int_{T_0}^{T_1}[u]^q_{W^{\sigma,q}(B_R(x_0))}\,\mathrm{d}t+\frac{7R^N}{\sigma(R-r)^q}\|u\|^q_{L^\infty\left( B_{R-d}(x_0)\times(T_0,T_1)\right) }\right)\right]^\frac{r}{q}
	\end{align}
	and
	\begin{align}
		\label{18.01}
		&\quad\int_{T_0}^{T_1}\int_{B_{R-d}(x_0)}|\tau_hu|^{m}(x,t)\,\mathrm{d}x\mathrm{d}t\nonumber\\
		&\le C|h|^{\sigma q}\left(\left(1-\frac{\sigma q}{m}\right)\int_{T_0}^{T_1}[u]^{m}_{W^{\frac{\sigma q}{m},m}(B_R(x_0))}\,\mathrm{d}t+\frac{7R^N}{\sigma(R-r)^m}\|u\|^m_{L^\infty\left( B_{R-d}(x_0)\times(T_0,T_1)\right) }\right)\nonumber\\
		&\le C|h|^{\sigma q}\left(\left(1-\frac{\sigma q}{m}\right)\left( \int_{T_0}^{T_1}[u(\cdot,t)]^{q}_{W^{\sigma,q}(\Omega_0)}\,\mathrm{d}t\right)^{\frac{m}{q}}+\frac{7R^N}{\sigma(R-r)^m}\|u\|^m_{L^\infty\left( B_{R-d}(x_0)\times(T_0,T_1)\right) }\right),
	\end{align}
	where $d:=\frac{1}{4}(R-r)$. Moreover, we replace $r,R$ by $r+d$ and $R-d$ respectively, and bring \eqref{18.1}, \eqref{18.01} into \eqref{15.0} to get
	\begin{align}
		\label{18.2}
		&\quad\int_{T_0+S}^{T_1}\int_{B_r(x_0)}\int_{B_r(x_0)}\frac{\left|J_{\frac{q}{p}+1}(\tau_hu)(x,t)-J_{\frac{q}{p}+1}(\tau_hu)(y,t)\right|^p}{|x-y|^{N+\frac{s_pp^2}{2}+\gamma\left(1-\frac{p}{2}\right)}}\,\mathrm{d}x\mathrm{d}y\mathrm{d}t\nonumber\\
		&\le \frac{C\left( \int_{T_0}^{T_1}[u(\cdot,t)]^q_{W^{\sigma,q}(B_R(x_0))}\,\mathrm{d}t\right)^{\frac{p\left(1-\frac{p}{2} \right) }{q}}}{S^{\frac{p}{2}}(R-r)^{(N+s_pp+1)\frac{p}{2}}}\mathcal{K}^{\frac{q-p}{p}\left(1-\frac{p}{2}\right)}|h|^{\sigma(q-p)\left( 1-\frac{p}{2}\right) }\nonumber\\
		&\quad\times\left[ \mathcal{K}^{\frac{q-p+1}{q}}|h|^{\sigma(q-p+1)}+\mathcal{K}|h|^{\sigma q}+\mathcal{T}^{p-1}\mathcal{K}^{\frac{q-p+1}{q}}|h|^{\sigma(q-p+1)+1}+\mathcal{K}^{\frac{q-p+2}{q}}|h|^{\sigma(q-p+2)}\right]^\frac{p}{2}\nonumber\\
		&\le \frac{C\left( \int_{T_0}^{T_1}[u(\cdot,t)]^q_{W^{\sigma,q}(B_R(x_0))}\,\mathrm{d}t\right)^{\frac{p\left(1-\frac{p}{2} \right) }{q}}}{S(R-r)^{N+s_pp+1}}\widetilde{\mathcal{K}}^{(q-p)\left(1-\frac{p}{2}\right)}|h|^{\sigma(q-p)\left(1-\frac{p}{2}\right)}\nonumber\\
		&\quad\times\left[\widetilde{\mathcal{K}}^{q-p+1}|h|^{\sigma(q-p+1)}+\widetilde{\mathcal{K}}^q|h|^{\sigma q}+\widetilde{\mathcal{K}}^q|h|^{\sigma(q-p+1)+1}+\widetilde{\mathcal{K}}^{q-p+2}|h|^{\sigma(q-p+2)}\right]^\frac{p}{2}\nonumber\\
		&\le\frac{C}{S(R-r)^{N+2q+1}}\widetilde{\mathcal{K}}^q|h|^{\sigma\left(q-\frac{p}{2}\right) },
	\end{align}
	where we set
	\begin{align*}
	&	\mathcal{K}:=\int_{T_0}^{T_1}[u(\cdot,t)]^q_{W^{\sigma,q}(B_R(x_0))}\,\mathrm{d}t+\|u\|^q_{L^\infty\left( B_{R}(x_0)\times(T_0,T_1)\right)},\\
	&	\widetilde{\mathcal{K}}:=\mathcal{T}+\mathcal{K}^\frac{1}{q}+1,
	\end{align*}
	and utilize H\"{o}lder's inequality in the second-to-last line.
	
	Next, one can use the method in the proof of Lemma \ref{lem6} to estimate the first term of \eqref{18.2} as 
	\begin{align}
		\label{18.3}
		&\quad\int_{T_0+S}^{T_1}\int_{B_r(x_0)}|\tau_hu(x,t)|^q\,\mathrm{d}x\mathrm{d}t\nonumber\\
		&\le\frac{C\left[ \left( \mathcal{T}+1\right)^q+\int_{T_0}^{T_1}[u]^q_{W^{\sigma,q}(B_R(x_0))} \,\mathrm{d}t\right] }{S(R-r)^{N+2q+1}}|h|^{\sigma\left( q-\frac{p}{2}\right)+\gamma\left(1-\frac{p}{2}\right)+\frac{s_pp^2}{2}}.
	\end{align}
	For any $\alpha\in (\sigma,\beta)$, we can take
	\begin{align*}
		\gamma:=\frac{(\alpha+\beta)q-s_pp^2-\sigma(2q-p)}{2-p}\in(0,\sigma p)
	\end{align*}
	so that we have
	\begin{align*}
		\tilde{\alpha}:=\frac{\sigma\left( q-\frac{p}{2}+\gamma\left( 1-\frac{p}{2}\right)+\frac{s_pp^2}{2}\right)}{q}>\alpha.
	\end{align*}
	Therefore, after applying of Lemma \ref{lem11}, we deduce
	\begin{align*}
		&\quad\int_{T_0+S}^{T_1}[u(\cdot,t)]^q_{W^{\alpha,q}\left( B_r(x_0)\right)}\mathrm{d}t\\
		&\le C\left[ \frac{(\mathcal{T}+1)^q+\int_{T_0}^{T_1}[u(\cdot,t)]^q_{W^{\sigma,q}(B_R(x_0))}\,\mathrm{d}t}{(\beta-\alpha)S(R-r)^{N+2q+1}}+\frac{\|u\|^q_{L^\infty\left(B_R(x_0)\times(T_0,T_1)\right)}}{\alpha(R-r)^{\alpha q}}\right]\\
		&\le\frac{C\left[\left(\mathcal{T}+1\right)^q+\int_{T_0}^{T_1}[u(\cdot,t)]^q_{W^{\sigma,q}\left( B_R(x_0)\right)}\mathrm{d}t\right]}{S(R-r)^{N+2q+1}}
	\end{align*}
	with the constant $C$ takes the form of
	\begin{align*}
		C=\frac{\widetilde{C}(N,p,q,s_1,s_p)}{(1-\alpha)^{q+1}\alpha(\beta-\alpha)}.
	\end{align*}
\end{proof}

\begin{lemma}
	\label{lem19}
	Let $p\in(1,2)$, $s_p\in\left( 0,\frac{p-1}{p}\right]$, $q\ge p$ and suppose $u\in L^q_{\mathrm{loc}}\bigl(I;W^{\sigma,q}_{\mathrm{loc}}(\Omega)\bigl)$ is a locally bounded weak solution to problem \eqref{1.1} in the sense of Definition \ref{def1}. Then the following conclusions hold:
	\begin{itemize}
		\item[(i)] If $\sigma\in \left(0,\frac{2q-s_pp^2}{2q-p^2+p}\right]$, we have
		\begin{align*}
			u\in L^q_{\mathrm{loc}}\bigl(I;W^{\alpha,q}_{\mathrm{loc}}(\Omega)\bigl)
		\end{align*}
		for any $\alpha\in(\sigma,\beta)$ with
		\begin{align*}
			\beta:=\left(1-\frac{p^2}{2q}+\frac{p}{2q}\right)\sigma+\frac{s_pp^2}{2q}. 
		\end{align*}
		Moreover, there exists a constant $C_1$ depending on $N,p,q,s_1,s_p,\alpha,\sigma$ such that for any ball $B_R(x_0)\subset\subset\Omega$ with $R\in(0,1)$, $r\in (0,R)$, $(T_0,T_1)\subset\subset(0,T)$ and $S\in \left( 0,\min\left\lbrace T_1-T_0,1\right\rbrace \right) $,
		\begin{align*}
			\int_{T_0+S}^{T_1}[u(\cdot,t)]^q_{W^{\alpha,q}\left( B_r(x_0)\right)}\mathrm{d}t\le\frac{C_1\left[\left(\mathcal{T}+1\right)^q+\int_{T_0}^{T_1}[u(\cdot,t)]^q_{W^{\sigma,q}\left( B_R(x_0)\right)}\mathrm{d}t\right]}{S(R-r)^{N+2q+1}};
		\end{align*}
		\item[(ii)] If $\sigma\in\left(\frac{2q-s_pp^2}{2q-p^2+p},1\right)$, we have
		\begin{align*}
			u\in L^q_{\mathrm{loc}}\bigl(I;W^{1,q}_{\mathrm{loc}}(\Omega)\bigl).
		\end{align*}
		Moreover, there exists a constant $C_2$ depending on $N,p,q,s_1,s_p,\sigma$ such that for any ball $B_R(x_0)\subset\subset\Omega$ with $R\in(0,1)$, $r\in (0,R)$, $(T_0,T_1)\subset\subset I$ and $S\in \left( 0,\min\left\lbrace T_1-T_0,1\right\rbrace \right) $,
		\begin{align*}
			\int_{T_0+S}^{T_1}\int_{B_r(x_0)}|\nabla u|^q\,\mathrm{d}x\mathrm{d}t\le\frac{C_2\left[\left(\mathcal{T}+1\right)^q+\int_{T_0}^{T_1}[u(\cdot,t)]^q_{W^{\sigma,q}\left( B_R(x_0)\right)}\mathrm{d}t\right]}{S(R-r)^{N+2q+1}}.
		\end{align*}
	\end{itemize}
\end{lemma}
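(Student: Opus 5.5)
The plan is to mirror the proof of Lemma \ref{lem12} line by line, with the super-quadratic estimate \eqref{6.4} replaced by the second-order difference estimate obtained in the proof of Lemma \ref{lem18}. Fix $d:=\frac14(R-r)$ and $h\in B_d\setminus\{0\}$. As in \eqref{18.1}--\eqref{18.2}, insert the bounds from Lemma \ref{lem7} for $\int|\tau_hu|^{q-p+1}$, $\int|\tau_hu|^{q-p+2}$ and $\int|\tau_hu|^{q}$ into the Caccioppoli-type inequality \eqref{15.0} of Proposition \ref{pro15}, with $(r,R)$ replaced by $(r+d,R-d)$ and the tail controlled by Lemma \ref{lem8}. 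This gives
\begin{align*}
\int_{T_0+S}^{T_1}\int_{B_{r+d}}\int_{B_{r+d}}\frac{|J_{\frac qp+1}(\tau_hu)(x,t)-J_{\frac qp+1}(\tau_hu)(y,t)|^p}{|x-y|^{N+\frac{s_pp^2}{2}+\gamma(1-\frac p2)}}\le \frac{C\widetilde{\mathcal K}^q}{S(R-r)^{N+2q+1}}|h|^{\sigma(q-\frac p2)}.
\end{align*}
Next, apply Lemma \ref{lem7} with step $\lambda=h$ to $J_{\frac qp+1}(\tau_hu)$, using the fractional order $\frac{s_pp}{2}+\frac{\gamma}{p}(1-\frac p2)$. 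Then use Lemma \ref{lem9} to pass from $|\tau_h(J_{\frac qp+1}(\tau_hu))|^p$ to $|\tau_h(\tau_hu)|^q$. The result is
\begin{align*}
\int_{T_0+S}^{T_1}\int_{B_r}|\tau_h(\tau_hu)|^q\,\mathrm dx\mathrm dt\le \frac{C[(\mathcal T+1)^q+\int_{T_0}^{T_1}[u]^q_{W^{\sigma,q}(B_R)}\mathrm dt]}{S(R-r)^{N+2q+1}}|h|^{\theta(\gamma)},
\end{align*}
where $\theta(\gamma):=\sigma(q-\frac p2)+\gamma(1-\frac p2)+\frac{s_pp^2}{2}$. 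As $\gamma\uparrow\sigma p$, the exponent increases to $\theta^*:=\sigma(q-\frac{p^2}2+\frac p2)+\frac{s_pp^2}{2}=q\beta$.

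The case split follows from comparing $\theta^*$ with $q$: we have $\theta^*\le q$ exactly when $\sigma\le\frac{2q-s_pp^2}{2q-p^2+p}$.

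\emph{Case (i).} Take $\sigma\le\frac{2q-s_pp^2}{2q-p^2+p}$ and fix $\alpha\in(\sigma,\beta)$. Choose $\gamma<\sigma p$ so that $\theta(\gamma)/q\in(\alpha,1)$; if $\theta^*=q$, take $\theta(\gamma)/q$ strictly between $\alpha$ and $1$. Apply the $\gamma<1$ branch of Lemma \ref{lem10} (with exponent $\theta(\gamma)/q$) to get $\int|\tau_hu|^q\le C|h|^{\theta(\gamma)}$ for $|h|<d/2$, the $L^q$ term being absorbed using local boundedness. Then Lemma \ref{lem11} yields the $W^{\alpha,q}$ bound with denominator $S(R-r)^{N+2q+1}$. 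This is identical to the end of the proof of Lemma \ref{lem18}.

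\emph{Case (ii).} Take $\sigma>\frac{2q-s_pp^2}{2q-p^2+p}$. Then $\theta^*>q$, so we may fix $\gamma\in(0,\sigma p)$ with $\theta(\gamma)>q$. The $\gamma>1$ branch of Lemma \ref{lem10} gives $\int_{T_0+S}^{T_1}\int_{B_r}|\tau_hu|^q\le CM^q|h|^q$, with $d^{(\theta/q-1)q}\le1$ since $R<1$. Lemma \ref{lem12.1} then gives weak differentiability and the stated $L^q$ bound on $\nabla u$.

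The main obstacle is the bookkeeping in the second step. One must check that, after Hölder's inequality, every bracketed term in \eqref{15.0} produces a power of $|h|$ at least $\sigma(q-\frac p2)$, which uses $|h|<1$ and $\sigma q\ge\sigma(q-p+1)$ together with the Hölder weights $1-\frac p2$ and $\frac p2$. One must also check that the tail term $|h|\mathcal T^{p-1}$ does not spoil this. Another delicate point is making sure the constant's blow-up in $\sigma p-\gamma$ (Remark \ref{rem15.1}) stays controlled. Since $\gamma$ is fixed depending only on $\sigma,\alpha$ (or only on $\sigma$ in case (ii)), this gives the dependence of $C_1$ on $N,p,q,s_1,s_p,\alpha,\sigma$ and of $C_2$ on $N,p,q,s_1,s_p,\sigma$.
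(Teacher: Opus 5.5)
Your proposal is correct and follows essentially the same route as the paper. You derive the second-order bound \eqref{19.1} from \eqref{18.2} via Lemmas \ref{lem7} and \ref{lem9}, split according to whether $q\beta=\sigma(q-\frac{p^2}{2}+\frac p2)+\frac{s_pp^2}{2}$ is at most $q$ or exceeds it, and then apply the $\gamma<1$ branch of Lemma \ref{lem10} with Lemma \ref{lem11} in case (i), or the $\gamma>1$ branch with Lemma \ref{lem12.1} in case (ii). The paper's explicit choices of $\gamma$ (in the proof of Lemma \ref{lem18} and in \eqref{19.2}) just place $\theta(\gamma)$ at the midpoint of $(\alpha q,q\beta)$, respectively $(q,q\beta)$, so they are particular instances of your choice.
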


\begin{proof}
	This proof is the version of the proof of Lemma \ref{lem12} in the sub-quadratic case. For $\sigma\in \left(0,\frac{2q-s_pp^2}{2q-p^2+p}\right]$, one can obtain the desired estimate by using the method in the proof of Lemma \ref{lem18} with the value of parameters $\gamma$ and $\tilde{\alpha}$ remain unchanged.
	
	For $\sigma\in\left(\frac{2q-s_pp^2}{2q-p^2+p},1\right)$, we handle the first term of \eqref{18.2} with Lemma \ref{lem7} to have, for any $\lambda\in B_d\backslash\left\lbrace0\right\rbrace$,
	\begin{align*}
		&\quad\int_{T_0+S}^{T_1}\int_{B_r(x_0)}\left|\tau_\lambda\left( J_{\frac{q}{p}+1}(\tau_hu)\right) \right|^p\,\mathrm{d}x\mathrm{d}t\\
		&\le C|\lambda|^{\frac{s_pp^2}{2}+\gamma\left( 1-\frac{p}{2}\right) } \int_{T_0}^{T_1}\int_{B_{r+d}(x_0)}\int_{B_{r+d}(x_0)}\frac{\left|J_{\frac{q}{p}+1}(\tau_hu)(x,t)-J_{\frac{q}{p}+1}(\tau_hu)(y,t)\right|^p}{|x-y|^{N+\frac{s_pp^2}{2}+\gamma\left(1-\frac{p}{2}\right)}}\,\mathrm{d}x\mathrm{d}y\mathrm{d}t\\
		&\quad+C\frac{|\lambda|^{\frac{s_pp^2}{2}+\gamma\left( 1-\frac{p}{2}\right) }}{\gamma(R-r)^{q}}\int_{T_0}^{T_1}\int_{B_{r+d}(x_0)}|\tau_hu|\,\mathrm{d}x\mathrm{d}t.
	\end{align*}
	Bringing the above inequality into \eqref{18.2} and letting $\lambda=h$, we deduce
	\begin{align}
		\label{19.1}
		&\quad\int_{T_0+S}^{T_1}\int_{B_r(x_0)}\left|\tau_h(\tau_hu)\right|^q\,\mathrm{d}x\mathrm{d}t\nonumber\\
		&\le\frac{C\left[\left(\mathcal{T}+1\right)^q+\int_{T_0}^{T_1}[u(\cdot,t)]^q_{W^{\sigma,q}\left( B_R(x_0)\right)}\mathrm{d}t\right]}{S(R-r)^{N+2q+1}}|h|^{\sigma\left(q-\frac{p}{2}\right)+\gamma\left(1-\frac{p}{2}\right)+\frac{s_pp^2}{2}}.
	\end{align}
	In order to ensure (ii) of Lemma \ref{lem10} can be applied, we set
	\begin{align}
		\label{19.2}
		\gamma:=\frac{\sigma\left(\frac{3}{2}p-\frac{1}{2}p^2-q\right)-\frac{s_pp^2}{2}+q}{2-p}
	\end{align}
	so that
	\begin{align*}
		\sigma\left(q-\frac{p}{2}\right)+\gamma\left(1-\frac{p}{2}\right)+\frac{s_pp^2}{2}>1.
	\end{align*}
	Thus, from (ii) of Lemma \ref{lem10} and  \eqref{19.1}, we obtain the following inequality:
	\begin{align*}
		\int_{T_0+S}^{T_1}\int_{B_r(x_0)}|\tau_hu|^q\,\mathrm{d}x\mathrm{d}t\le \frac{C\left[\left(\mathcal{T}+1\right)^q+\int_{T_0}^{T_1}[u(\cdot,t)]^q_{W^{\sigma,q}\left( B_R(x_0)\right)}\mathrm{d}t\right]}{S(R-r)^{N+2q+1}}|h|^q,
	\end{align*}
	where $C$ takes the form of
	\begin{align*}
		C=\frac{C(N,p,q,s_1,s_p)}{(\sigma p-\gamma)^\frac{q-p}{q}\left[\sigma\left(q+\frac{p}{2}-\frac{p^2}{2}\right)-\frac{s_pp^2}{2}\right]^q},
	\end{align*}
	and $\gamma$ is defined in \eqref{19.2}. By Lemma \ref{lem12}, we conclude the desired result.
\end{proof}

\begin{lemma}
	\label{lem20}
	Let $p\in(1,2)$, $q\ge p$ and suppose that $u\in L^q_{\mathrm{loc}}\bigl(I;W^{\sigma,q}_{\mathrm{loc}}(\Omega)\bigl)$ is a locally bounded weak solution to problem \eqref{1.1} in the sense of Definition \ref{def1}. Then the following conclusions hold:
	\begin{itemize}
		\item[(i)] If $s_p\in\left(0,\frac{p-1}{p}\right]$, we have
		\begin{align*}
			u\in L^q_{\mathrm{loc}}\bigl(I;W^{\alpha,q}_{\mathrm{loc}}(\Omega)\bigl)
		\end{align*}
		for any $\alpha\in\left( \sigma,\frac{s_pp}{p-1}\right) $. Moreover, there exist constants $C_1,\kappa_1,\kappa_2$ depending on $N,p,q,s_1,s_p,\alpha$ and $\sigma$ such that for any ball $B_R(x_0)\subset\subset\Omega$ with $R\in(0,1)$, $r\in (0,R)$, $(T_0,T_1)\subset\subset I$ and $S\in \left( 0,\min\left\lbrace T_1-T_0,1\right\rbrace \right) $,
		\begin{align*}
			\int_{T_0+S}^{T_1}[u(\cdot,t)]^q_{W^{\alpha,q}\left( B_r(x_0)\right) }\mathrm{d}t\le\frac{C_1\left[\left(\mathcal{T}+1\right)^q+\int_{T_0}^{T_1}[u(\cdot,t)]^q_{W^{\sigma,q}\left( B_R(x_0)\right)}\mathrm{d}t\right]}{S^{\kappa_2}(R-r)^{\kappa_1}};
		\end{align*}
		\item[(ii)] If $s_p\in \left(\frac{p-1}{p},1\right)$, we have
		\begin{align*}
			u\in L^q_{\mathrm{loc}}\bigl(I;W^{1,q}_{\mathrm{loc}}(\Omega)\bigl).
		\end{align*}
		Moreover, there exist constants $C_2,\kappa_3,\kappa_4$ depending on $N,p,q,s_1,s_p$ and $\sigma$ such that for any ball $B_R(x_0)\subset\subset\Omega$ with $R\in(0,1)$, $r\in (0,R)$, $(T_0,T_1)\subset\subset I$ and $S\in \left( 0,\min\left\lbrace T_1-T_0,1\right\rbrace \right) $,
		\begin{align*}
			\int_{T_0+S}^{T_1}\int_{B_r(x_0)}|\nabla u|^q\,\mathrm{d}x\mathrm{d}t\le\frac{C_2\left[\left(\mathcal{T}+1\right)^q+\int_{T_0}^{T_1}[u(\cdot,t)]^q_{W^{\sigma,q}\left( B_R(x_0)\right)}\mathrm{d}t\right]}{S^{\kappa_4}(R-r)^{\kappa_3}}.
		\end{align*}
	\end{itemize}
\end{lemma}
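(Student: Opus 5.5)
This is the sub-quadratic analogue of Lemma \ref{lem13}, and I would prove it by the same finite iteration, now driven by Lemmas \ref{lem18} and \ref{lem19} instead of Lemmas \ref{lem6} and \ref{lem12}. The one structural change is the contraction factor. In the sub-quadratic case the gain map is
\begin{align*}
\sigma\mapsto\beta(\sigma)=\Bigl(1-\frac{p^2-p}{2q}\Bigr)\sigma+\frac{s_pp^2}{2q}.
\end{align*}
Its fixed point is $\frac{s_pp^2}{p^2-p}=\frac{s_pp}{p-1}$, and the contraction factor is $\theta:=1-\frac{p(p-1)}{2q}\in(0,1)$. As in the proof of Lemma \ref{lem13}, I would use the radii $\rho_i:=r+\frac{R-r}{2^i}$ and the times $S_i:=S(1-2^{-i})$. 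The tails satisfy $\mathcal{T}_i\le C(N)2^{iN}\mathcal{T}$ by Lemma \ref{lem8}.

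\textbf{Case (i), $s_p\le\frac{p-1}{p}$.} Fix $\alpha\in(\sigma,\frac{s_pp}{p-1})$ and set $\tilde\alpha:=\frac12(\alpha+\frac{s_pp}{p-1})$. Define
\begin{align*}
\alpha_0:=\sigma,\qquad \alpha_{i+1}:=\theta\alpha_i+(1-\theta)\tilde\alpha .
\end{align*}
Since $\tilde\alpha<\frac{s_pp}{p-1}$, we have $\alpha_{i+1}<\beta(\alpha_i)$, so Lemma \ref{lem18} applies at each step with $(\sigma,\alpha,r,R,S)$ replaced by $(\alpha_i,\alpha_{i+1},\rho_{i+1},\rho_i,S/2^{i+1})$. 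This gives a recursion of the form \eqref{13.3}, with the factor $(R-r)^{N+2q+1}$ in place of $(R-r)^{N+2q+2}$.

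The key point is that the constant in Lemma \ref{lem18}, namely $\widetilde C/\bigl((1-\alpha)^{q+1}\alpha(\beta-\alpha)\bigr)$, is bounded uniformly in $i$. This holds because $\alpha_i\in[\sigma,\tilde\alpha]$ stays away from $0$ and $1$, and $\beta(\alpha_i)-\alpha_{i+1}=(1-\theta)\bigl(\frac{s_pp}{p-1}-\tilde\alpha\bigr)$ is a fixed positive number. Since $\alpha_i=\theta^i(\sigma-\tilde\alpha)+\tilde\alpha$, I would stop at
\begin{align*}
i_0:=\Bigl[\frac{\ln\frac{\tilde\alpha-\alpha}{\tilde\alpha-\sigma}}{\ln\theta}\Bigr]+1,
\end{align*}
so that $\alpha_{i_0}>\alpha$. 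Iterating the recursion $i_0$ times and then lowering the exponent from $\alpha_{i_0}$ to $\alpha$ as in \eqref{13.5} (using $|x-y|<2$) gives the claimed bound with $\kappa_1=(N+2q+1)i_0$ and $\kappa_2=i_0$.

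\textbf{Case (ii), $s_p>\frac{p-1}{p}$.} Here $\frac{s_pp}{p-1}>1$, so the fixed point lies above the threshold $\sigma_*:=\frac{2q-s_pp^2}{2q-p^2+p}$ of Lemma \ref{lem19}. This threshold is exactly the point where $\beta(\sigma_*)=1$, and $\sigma_*<1$ precisely because $s_pp>p-1$. I would pick a target $\gamma\in(\sigma_*,1)$, for instance $\gamma:=\frac12(\sigma_*+1)$, and set $\tilde\gamma:=\frac{1+\gamma}{2}$. With $\gamma_0:=\sigma$ and $\gamma_{i+1}:=\theta\gamma_i+(1-\theta)\tilde\gamma$, I run the same iteration through part (i) of Lemma \ref{lem19} (equivalently Lemma \ref{lem18}) for $k_0$ steps, where $k_0$ is the first index with $\gamma_{k_0}>\gamma$.

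Admissibility needs $\gamma_{i+1}<\beta(\gamma_i)$ at every step. This holds because $\beta(x)-x=(1-\theta)\bigl(\frac{s_pp}{p-1}-x\bigr)$ and $\tilde\gamma<1<\frac{s_pp}{p-1}$. Once $\gamma_{k_0}>\sigma_*$, I would apply part (ii) of Lemma \ref{lem19} on $B_{\rho_{k_0}}$ with starting time $T_0+S_{k_0}$. This yields the $L^q(W^{1,q})$ bound with one extra factor $S^{-1}(R-r)^{-(N+2q+1)}$ and the accumulated powers $2^{O(k_0^2)}$, so $\kappa_3=(N+2q+1)(k_0+1)$ and $\kappa_4=k_0+1$.

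\textbf{Main obstacle.} The hard part is bookkeeping rather than analysis. I have to check that the auxiliary parameter $\gamma$ chosen inside Lemmas \ref{lem18} and \ref{lem19} stays in $(0,\sigma p)$ along the whole sequence. I also have to check that the constants in Remark \ref{rem15.1} and Lemma \ref{lem19}, which involve $(\sigma p-\gamma)^{-(q-p)/q}$ and $\bigl[\sigma(q+\frac p2-\frac{p^2}{2})-\frac{s_pp^2}{2}\bigr]^{-q}$, remain bounded uniformly in $i$. I would handle this by keeping every $\alpha_i$ and $\gamma_i$ in a compact subinterval of $(0,1)$, and by using the final step only when $\gamma_{k_0}$ exceeds $\sigma_*$ by a fixed margin. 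That margin is built into the choice of $\gamma$.
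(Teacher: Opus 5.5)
Your proposal is correct and follows the paper's own proof. The paper runs the same finite iteration of Lemma \ref{lem18} and Lemma \ref{lem19}(i), with $\alpha_{i+1}=\theta\alpha_i+(1-\theta)\tilde\alpha$ (resp.\ $\gamma_{i+1}$), dyadically shrinking radii $\rho_i$ and times $S_i$, lowers the exponent at the end as in \eqref{13.5}, closes case (ii) with one application of Lemma \ref{lem19}(ii), and arrives at the same $\kappa_1,\dots,\kappa_4$. Your stopping index and your choice $\gamma=\frac12(\sigma_*+1)$ are the correct forms of the corresponding parameters, which are misprinted in the paper's proof (its $\gamma$ equals $1+\sigma_*$). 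Like the paper, you implicitly use Lemmas \ref{lem18}--\ref{lem19} in the range $s_p>\frac{p-1}{p}$, which is where part (ii) of Lemma \ref{lem19} is actually meant to hold.
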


\begin{proof}
	We do not provide a detailed proof of this Lemma since it is almost exactly the same as the proof of Lemma \ref{lem13}. Indeed, an iteration is performed on Lemma \ref{lem18} and (i) of Lemma \ref{lem19} (when in the case of $s_p\in\left( 0,\frac{p-1}{p}\right)$, (ii) of Lemma \ref{lem19} also needs to be applied after the iteration to obtain gradient estimate of the weak solution). The difference lies in the definition of some parameters which we list below.
	
	To prove (i), for any $\alpha$, we set
	\begin{align*}
		\begin{cases}
			\tilde{\alpha}:=\frac{1}{2}\left(\alpha+\frac{s_pp}{p-1}\right),\\
			\alpha_0:=\sigma,\quad\alpha_{i+1}:=\left( 1-\frac{p^2}{2q}+\frac{p}{2q}\right)\alpha_i+\tilde{\alpha}\frac{p^2-p}{2q},\\
			\beta_i:=\left( 1-\frac{p^2}{2q}+\frac{p}{2q}\right)\alpha_i+\frac{s_pp^2}{2q}.
		\end{cases}
	\end{align*}
	After calculation, we request
	\begin{align*}
		i_0:=\left[\frac{\ln\frac{\tilde{\alpha}-\alpha}{\alpha-\sigma}}{\ln\left( 1-\frac{p^2}{2q}+\frac{p}{2q}\right) }\right]+2 
	\end{align*}
	such that $\alpha_{i_0}\ge\alpha$ and deduce the desired inequality with $C_1:=3^{i_0}2^{\left(N+2q+1+\frac{Nq}{p-1}\right)\frac{i_0(i_0+1)}{2} }C^{i_0}_*$, $\kappa_1:=(N+2q+1)i_0$ and $\kappa_2:=i_0$, here $C_*$ takes the form of
	\begin{align*}
		C_*:=\frac{C(N,p,q,s_1,s_p)}{\sigma(1-\alpha)^{q+1}\left[s_pp^2-(p-1)\tilde{\alpha}\right]^{\frac{q-p}{q}}}
	\end{align*}
	and it is independent of the index $i$.
	
	To prove (ii), we set
	\begin{align*}
		\begin{cases}
			\gamma:=\frac{4q-s_pp^2-p^2+p}{2q-p^2+p},\\
			\tilde{\gamma}:=\frac{1+\gamma}{2},\\
			\gamma_0:=\sigma,\quad \gamma_{i+1}:=\left( 1-\frac{p^2}{2q}+\frac{p}{2q}\right)\gamma_i+\tilde{\gamma}\frac{p^2-p}{2q}.
		\end{cases}
	\end{align*}
	After iterating (i) of Lemma \ref{lem19}, one can obtain
	\begin{align}
		\label{20.1}
		\int_{T_0+S_{i'_0}}^{T_1}[u(\cdot,t)]^q_{W^{\gamma_{i'_{0}},q}\left(B_{\rho_{i'_0}}(x_0)\right)}\,\mathrm{d}t\le \frac{\widetilde{C}_{i'_0}\left[\left(\mathcal{T}+1\right)^q+\int_{T_0}^{T_1}[u(\cdot,t)]^q_{W^{\sigma,q}\left( B_R(x_0)\right)}\mathrm{d}t\right]}{S^{i'_0}(R-r)^{(N+2q+1)i'_0}},
	\end{align}
	where $\widetilde{C}_{i'_0}:=3^{i'_0}2^{\left(N+2q+1+\frac{Nq}{p-1}\right)\frac{i'_0(i'_0+1)}{2} }\widetilde{C}^{i'_0}_*$, $\widetilde{C}_*$ takes the form of
	\begin{align*}
		\widetilde{C}_*:=\frac{C(N,p,q,s_1,s_p)}{\sigma(1-\alpha)^{q+2}}
	\end{align*}
	and $i'_0$ is given by
	\begin{align*}
		i'_0:=\left[\frac{\ln\frac{1-\alpha}{2(\alpha-\sigma)}}{\ln\left(1-\frac{p^2}{2q}+\frac{p}{2q}\right)}\right]+2. 
	\end{align*}
	
	Finally, we apply (ii) of Lemma \ref{lem19} providing
	\begin{align*}
		\int_{T_0+S_{i'_0}}^{T_1}[u(\cdot,t)]^q_{W^{\gamma_{i'_{0}},q}\left(B_{\rho_{i'_0}}(x_0)\right)}\,\mathrm{d}t<+\infty
	\end{align*}
	to conclude
	\begin{align*}
		\int_{T_0+S}^{T_1}\int_{B_r(x_0)}|\nabla u|^q\,\mathrm{d}x\mathrm{d}t&\le\frac{C\left[\left(\mathcal{T}+1\right)^q+\int_{T_0+S_{i'_0}}^{T_1}[u(\cdot,t)]^q_{W^{\gamma_{i'_{0}},q}\bigl(B_{\rho_{i'_0}}(x_0)\bigl)}\,\mathrm{d}t\right]}{(S-S_{i'_0})(\rho_{i'_0}-r)^{N+2q+1}}\\
		&\le \frac{4^{i'_0+1}C\widetilde{C}_{i'_0}\left[\left(\mathcal{T}+1\right)^q+\int_{T_0+S_{i'_0}}^{T_1}[u(\cdot,t)]^q_{W^{\gamma_{i'_{0}},q}\bigl(B_{\rho_{i'_0}}(x_0)\bigl)}\,\mathrm{d}t\right]}{S^{i'_0+1}(R-r)^{(N+2q+1)(i'_0+1)}}.
	\end{align*}
	We set $C_2:=4^{i'_0+1}C\widetilde{C}_{i'_0}$, $\kappa_3:=(N+2q+1)(i'_0+1)$, $\kappa_4:=i'_0+1$ and finish the proof.
\end{proof}

Although the constants in the sub-quadratic case involve additional parameter dependence compared with the super-quadratic case, the argument is simpler because no iteration is required to improve the integrability of weak solutions. The details can be found in the proof of Theorem \ref{th21}.

\begin{theorem}
	\label{th21}
	Let $p\in(1,2)$ and suppose that $u$ is a locally bounded weak solution to problem \eqref{1.1} in the sense of Definition \ref{def1}. Then, for any $q\ge p$, the following conclusions hold:
	\begin{itemize}
		\item[(i)] If $s_p\in \left( 0,\frac{p-1}{p}\right]$, we have
		\begin{align*}
			u\in L^q_{\mathrm{loc}}\bigl(I;W^{\alpha,q}_{\mathrm{loc}}(\Omega)\bigl)
		\end{align*}
		for any $\alpha\in\left(s_p,\frac{s_pp}{p-1}\right)$. Moreover, there exist constants $C_1,\lambda_1,\lambda_2$ depending on $N,p,q,s_1,s_p$ and $\alpha$ such that for any ball $B_R(x_0)\subset\subset\Omega$ with $R\in(0,1)$, $r\in (0,R)$, $(T_0,T_1)\subset\subset I$ and $S\in \left( 0,\min\left\lbrace T_1-T_0,1\right\rbrace \right) $,
		\begin{align*}
			\int_{T_0+S}^{T_1}[u(\cdot,t)]^q_{W^{\alpha,q}\left( B_r(x_0)\right) }\mathrm{d}t\le\frac{C_1\left[\left(\mathcal{T}+1\right)^q+\left( \int_{T_0}^{T_1}[u(\cdot,t)]^p_{W^{s_p,p}\left( B_R(x_0)\right)}\mathrm{d}t\right)^{\frac{q}{p}} \right]}{S^{\lambda_2}(R-r)^{\lambda_1}};
		\end{align*}
		\item[(ii)] If $s_p\in \left( \frac{p-1}{p},1\right)$, we have
		\begin{align*}
			u\in L^q_{\mathrm{loc}}\bigl(I;W^{1,q}_{\mathrm{loc}}(\Omega)\bigl).
		\end{align*}
		Moreover, there exist constants $C_2, \theta_1, \theta_2$ depending on $N, p, q, s_1$ and $s_p$ such that for any ball $B_R(x_0)\subset\subset\Omega$ with $R\in(0,1)$, $r\in (0,R)$, $(T_0,T_1)\subset\subset I$ and $S\in \left( 0,\min\left\lbrace T_1-T_0,1\right\rbrace \right) $,
		\begin{align*}
			\int_{T_0+S}^{T_1}\int_{B_r(x_0)}|\nabla u|^q\,\mathrm{d}x\mathrm{d}t\le\frac{C_2\left[\left(\mathcal{T}+1\right)^q+\left( \int_{T_0}^{T_1}[u(\cdot,t)]^p_{W^{s_p,p}\left( B_R(x_0)\right)}\mathrm{d}t\right)^{\frac{q}{p}} \right]}{S^{\theta_2}(R-r)^{\theta_1}}.
		\end{align*}
	\end{itemize}
\end{theorem}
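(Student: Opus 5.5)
The proof follows the scheme of Theorem \ref{th14}, with Lemma \ref{lem20} in place of Lemma \ref{lem13}. There is one simplification: in the sub-quadratic regime Proposition \ref{pro15} and Lemmas \ref{lem18}--\ref{lem20} need no lower bound $\sigma>\mu^{s_p}_p$ on the starting differentiability. So no induction on the integrability exponent is required, and a single application of Lemma \ref{lem20} suffices. The interpolation bound \eqref{14.1} is the device that converts the natural energy $u\in L^p(J;W^{s_p,p})$ into the initial datum for Lemma \ref{lem20} at any integrability level $q\ge p$.

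\emph{Step 1 (initial datum).} Fix $q\ge p$ and set $\sigma:=\frac{s_pp}{q}\in(0,s_p]$. Since $u$ is locally bounded, \eqref{14.1} with $q_1=q$, $q_2=p$, $\alpha=s_p$ gives
\begin{align*}
\int_{T_0}^{T_1}[u(\cdot,t)]^q_{W^{\sigma,q}(B_R(x_0))}\,\mathrm{d}t\le C\left[\|u\|_{L^\infty(B_R(x_0)\times(T_0,T_1))}+\left(\int_{T_0}^{T_1}[u(\cdot,t)]^p_{W^{s_p,p}(B_R(x_0))}\,\mathrm{d}t\right)^{\frac1p}\right]^q .
\end{align*}
Using $\|u\|_{L^\infty}\le\mathcal{T}$, the right-hand side is bounded by $C\bigl[(\mathcal{T}+1)^q+(\int_{T_0}^{T_1}[u]^p_{W^{s_p,p}}\,\mathrm{d}t)^{q/p}\bigr]$. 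Hence $u\in L^q_{\mathrm{loc}}(I;W^{\sigma,q}_{\mathrm{loc}}(\Omega))$, which is the hypothesis of Lemma \ref{lem20}.

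\emph{Step 2 (case (i)).} Let $s_p\in(0,\frac{p-1}{p}]$ and $\alpha\in(s_p,\frac{s_pp}{p-1})$. Since $\sigma\le s_p<\alpha$, we have $\alpha\in(\sigma,\frac{s_pp}{p-1})$, so Lemma \ref{lem20}(i) applies with this $\sigma$ on the cylinder $B_R(x_0)\times(T_0,T_1)$. Substituting the bound from Step 1 yields the claimed estimate with $\lambda_1=\kappa_1$ and $\lambda_2=\kappa_2$. These constants depend on $\sigma=s_pp/q$, which is a function of $p,q,s_p$ only, so they depend on the admissible parameters.

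\emph{Step 3 (case (ii)).} Let $s_p\in(\frac{p-1}{p},1)$. With the same $\sigma$, Lemma \ref{lem20}(ii) gives the gradient bound, and substituting Step 1 gives $\theta_1=\kappa_3$ and $\theta_2=\kappa_4$.

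\emph{Main obstacle.} The delicate point is checking that Lemma \ref{lem20} really applies for every small $\sigma=s_pp/q$, in particular that its constants stay finite. Two quantities must be checked.
\begin{itemize}
\item The factor $[s_pp^2-(p-1)\tilde\alpha]$ in $C_*$ must be positive. This uses $\tilde\alpha<\frac{s_pp}{p-1}$, and I would verify it against the definition of $\tilde\alpha$.
\item The choice of $\gamma\in(0,\sigma p)$ in Lemma \ref{lem18} at each stage of the iteration must be admissible. This depends only on $\alpha_i>\sigma>0$, since the sequence $\alpha_i$ is increasing.
\end{itemize}
If a degeneracy appeared as $\sigma\to0$, I would first start the iteration at a larger $\sigma$ obtained from a preliminary application of Lemma \ref{lem18}. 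Such a step is still controlled by the $W^{s_p,p}$ energy through \eqref{14.1}.
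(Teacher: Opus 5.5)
Your proposal is correct and is the paper's argument. The paper also applies \eqref{14.1} with $q_1=q$, $q_2=p$, $\alpha=s_p$ (its \eqref{21.1}) to get $u\in L^q(W^{s_pp/q,q})$, and then invokes Lemma~\ref{lem20}(i), respectively (ii), directly, with no integrability iteration since Proposition~\ref{pro15} needs no lower bound on $\sigma$. Your extra checks on the constants are harmless but unnecessary: $\sigma=s_pp/q$ is a fixed positive number depending only on $p,q,s_p$.
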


\begin{proof}
	Unlike the super-quadratic case, Proposition \ref{pro15} holds without requiring a positive lower bound on the parameter $\sigma$, so we do not need to improve integrability through iteration.
	
	Note the fact that \eqref{14.1} also holds in the sub-quadratic case. We can choose $q_1=q$, $q_2=p$ and $\alpha=s_p$ to have
	\begin{align}
		\label{21.1}
		&\quad\int_{T_0}^{T_1}[u(\cdot,t)]^{q}_{W^{\frac{s_p p}{q},q}(B_R(x_0))}\,\mathrm{d}t\nonumber\\
		&\le C(p,q)\left[\|u\|^q_{L^\infty\left(B_R(x_0)\times(T_0,T_1)\right)}+\left( \int_{T_0}^{T_1}[u(\cdot,t)]^{p}_{W^{s_p,p}(B_R(x_0))}\,\mathrm{d}t\right)^{\frac{q}{p}}\right].
	\end{align}
	For the case $s_p\in\left( 0,\frac{p-1}{p}\right]$, by combining \eqref{21.1} and (i) of Lemma \ref{lem20}, we have the following estimate
	\begin{align*}
		\int_{T_0+S}^{T_1}[u(\cdot,t)]^q_{W^{\alpha,q}(B_r(x_0))}\,\mathrm{d}t&\le\frac{C_1\left[\left(\mathcal{T}+1\right)^q+\int_{T_0}^{T_1}[u(\cdot,t)]^q_{W^{\frac{s_p p}{q},q}\left( B_R(x_0)\right)}\mathrm{d}t \right]}{S^{\lambda_2}(R-r)^{\lambda_1}}\\
		&\le\frac{C_1\left[\left(\mathcal{T}+1\right)^q+\left( \int_{T_0}^{T_1}[u(\cdot,t)]^p_{W^{s_p,p}\left( B_R(x_0)\right)}\mathrm{d}t\right)^{\frac{q}{p}} \right]}{S^{\lambda_2}(R-r)^{\lambda_1}}.
	\end{align*}
	
	Similarly, when $s_p\in\left( \frac{p-1}{p},1\right)$, we combining \eqref{21.1} and (ii) of Lemma \ref{lem20} to obtain
	\begin{align*}
		\int_{T_0+S}^{T_1}\int_{B_r(x_0)}|\nabla u|^q\,\mathrm{d}x\mathrm{d}t\le\frac{C_2\left[\left(\mathcal{T}+1\right)^q+\left( \int_{T_0}^{T_1}[u(\cdot,t)]^p_{W^{s_p,p}\left( B_R(x_0)\right)}\mathrm{d}t\right)^{\frac{q}{p}} \right]}{S^{\theta_2}(R-r)^{\theta_1}}.
	\end{align*}
\end{proof}

\section{H\"{o}lder regularity of solutions}
\label{sec5}
In this section, we prove our main results. Unlike the elliptic case, the Sobolev regularity obtained does not immediately yield H\"{o}lder estimates of $u(\cdot,t)$ with respect to the spatial variable $x$ via embeddings. Consequently, we reapply Propositions \ref{pro4} and \ref{pro15} to extract pointwise information of the weak solution in the time variable \(t\). The following corollary makes this procedure explicit.
\begin{corollary}
	\label{cor22}
	Let $p>1$ and suppose $u$ is a locally bounded weak solution to problem \eqref{1.1} in the sense of Definition \ref{def1}. Then, for any $t\in I $, we have
	\begin{align*}
		u(\cdot,t)\in C^\alpha_\mathrm{loc}(\Omega)
	\end{align*}
	For any $\alpha\in\left( 0,\min\left\lbrace 1,\frac{s_pp}{p-1}\right\rbrace \right) $. Moreover, there exist constants $C_1,\mu_1,\mu_2$ depending on $N,p,q,s_1,s_p,\alpha$ such that for any ball $B_R(x_0)\subset\subset\Omega$ with $R\in(0,1)$, $r\in(0,R)$ and $S\in\left( 0,\min\left\lbrace t,1\right\rbrace \right) $,
	\begin{align*}
		[u(\cdot,t)]_{C^{0,\alpha}(B_r(x_0))}\le \frac{C\left[ \left( \mathcal{T}+1\right)^2+\left( \int_{t-S}^{t}[u(\cdot,t)]^p_{W^{s_p,p}\left( B_R(x_0)\right)}\mathrm{d}t\right)^{\frac{2}{p}}\right] }{S^{\mu_2}(R-r)^{\mu_1}}.
	\end{align*}
\end{corollary}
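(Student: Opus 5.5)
The goal is a pointwise-in-time spatial Hölder estimate. The space--time Sobolev bounds of Theorems \ref{th14} and \ref{th21} cannot simply be embedded, because they control $u$ only in $L^q$ in time. The extra ingredient is the term $\int_{B_R}|\tau_hu|^{\delta+1}(x,T_0+S)\eta\,\mathrm{d}x$ in \eqref{3.0}, which was dropped in Remark \ref{rem5}. Since the cut-off $\xi$ equals $1$ on $[T_0+S,T_1]$, we may take $T_1=t$, with $t$ the fixed time, and run the argument of Lemma \ref{lem3} on $(t-S,t)$ with the time cut-off vanishing at $t-S$. The boundary term then appears at the final time $t$, and we get a sup-in-time control
\begin{align*}
\int_{B_r(x_0)}|\tau_hu(x,t)|^{q-p+2}\,\mathrm{d}x\le \text{(right-hand side of \eqref{4.0} or \eqref{15.0})}.
\end{align*}
To make this rigorous, every other term on the left of \eqref{3.0} must be nonnegative or absorbable. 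For $I_{1,1}$ this holds by \eqref{4.1}. For $P_1$ it holds after moving the negative parts ($P_{1,2}$, $P_4$) to the right, exactly as in the proofs of Propositions \ref{pro4} and \ref{pro15}.

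Next we estimate the right-hand side using the Sobolev regularity already obtained. Fix a large $q$ and $\beta<\min\{1,\frac{s_pp}{p-1}\}$ close to the endpoint. Theorem \ref{th14} or Theorem \ref{th21}, combined with Lemma \ref{lem7}, gives $\int_{t-S}^{t}\int_{B_R}|\tau_hu|^m\,\mathrm{d}x\mathrm{d}\tau\le K|h|^{\beta m'}$ for each exponent $m\in\{q-p+1,q-p+2,q\}$ that occurs. Here $K$ is bounded by $(\mathcal{T}+1)^q+(\int[u]^p_{W^{s_p,p}})^{q/p}$ divided by powers of $S$ and $R-r$, and $\beta m'$ is an effective exponent obtained through Hölder's inequality and local boundedness (as in \eqref{6.0} and \eqref{18.1}). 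In the gradient case $s_p>\frac{p-1}{p}$ we instead use Lemma \ref{lem12.1} to get $|h|^{m}$.

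The worst term is the $1$-growth contribution, of order $\int|\tau_hu|^{q-p+1}$. It yields
\begin{align*}
\sup_{|h|<d}|h|^{-\theta}\Bigl(\int_{B_r}|\tau_hu(\cdot,t)|^{q-p+2}\,\mathrm{d}x\Bigr)<\infty,
\qquad \theta\approx\beta(q-p+1).
\end{align*}
Dividing the exponent by $q-p+2$ gives a Besov-type Hölder exponent $\theta/(q-p+2)\to\beta$ as $q\to\infty$. Lemma \ref{lem11} then gives $u(\cdot,t)\in W^{\alpha',q-p+2}(B_r)$ for every $\alpha'<\theta/(q-p+2)$. Choosing $q$ large so that $\alpha'(q-p+2)>N$ and $\alpha'-\frac{N}{q-p+2}>\alpha$, Lemma \ref{lem23} gives the $C^{0,\alpha}$ bound.

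The exponent $2$ on $(\mathcal{T}+1)$ and on the energy comes from taking the $(q-p+2)$-th root of a quantity of size about $(\mathcal{T}+1)^{2q}$. The main obstacle is the bookkeeping of powers: with $\delta=q-p+1$ the right-hand side mixes homogeneities ($q-p+1$, $q-p+2$, $q$, and the $W^{\sigma,q}$ factor in $P_{1,2}$). Each has to be reduced via $\|u\|_\infty\le\mathcal{T}$ and Young's inequality to a form $\lesssim[(\mathcal{T}+1)^2+E^{2/p}]^{q-p+2}$, where $E$ is the $W^{s_p,p}$ energy. At the same time we must check that no negative power of $|h|$ survives and that all constants stay uniform in $t$, with powers of $S$ and $R-r$ collected into $\mu_1,\mu_2$.
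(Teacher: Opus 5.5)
Your proposal follows essentially the same route as the paper: reuse the time-slice term of \eqref{3.0} at the final time $t$, and bound the remaining terms as in Propositions \ref{pro4}/\ref{pro15} using the Sobolev regularity of Theorems \ref{th14}/\ref{th21} via \eqref{6.0}, \eqref{18.1}; then conclude with Lemma \ref{lem11} and Lemma \ref{lem23} for large $q$ (the paper only shifts $q\mapsto q+p-2$, so that the slice is $\int|\tau_hu(\cdot,t)|^q$, and fixes $\sigma=2\tilde\alpha-\alpha$, $q=N/(\tilde\alpha-\alpha)$). Two small corrections: for $p<2$ the slice is controlled by the bracket of terms that bound $P_3$ (cf.\ \eqref{22.1}), not by the right-hand side of \eqref{15.0} itself; and in the gradient case you need the elementary bound $\int|\tau_hu|^q\le|h|^q\int|\nabla u|^q$ (or Lemma \ref{lem14.1} plus \eqref{6.0}), which is the converse of Lemma \ref{lem12.1}, not Lemma \ref{lem12.1} itself.
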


\begin{proof}
	Let us first consider the case $p\ge2$. Thanks to the proof of Proposition \ref{pro4}, we can estimate the third term on the left-hand side of \eqref{3.0} as follows
	\begin{align*}
		&\quad\int_{B_{r+d}(x_0)}|\tau_hu(x,t)|^q\,\mathrm{d}x\\
		&\le\frac{C(N,p,q,s_1,s_p)}{S(R-r)^{N+s_pp+1}}\left[\int_{t-\frac{S}{2}}^{t}\int_{B_{R-2d}(x_0)}|\tau_hu|^{q-1}\,\mathrm{d}x\mathrm{d}t\right.\\
		&\quad+\left( \int_{t-\frac{S}{2}}^{t}[u(\cdot,t)]^{q+p-2}_{W^{\sigma,q+p-2}(B_{R-d}(x_0))}\,\mathrm{d}t\right)^{\frac{p-2}{q+p-2}}\left( \int_{t-\frac{S}{2}}^{t}\int_{B_{R-2d}(x_0)}|\tau_hu|^{q+p-2}\,\mathrm{d}x\mathrm{d}t\right)^\frac{q}{q+p-2}\\
		&\quad+\mathcal{T}^{p-2}\int_{t-\frac{S}{2}}^{t}\int_{B_{R-2d}(x_0)}|\tau_hu|^q\,\mathrm{d}x\mathrm{d}t+\mathcal{T}^{p-1}|h|\int_{t-\frac{S}{2}}^{t}\int_{B_{R-2d}(x_0)}|\tau_hu|^{q-1}\,\mathrm{d}x\mathrm{d}t\\
		&\quad\left.+\int_{t-\frac{S}{2}}^{t}\int_{B_{R-2d}(x_0)}|\tau_hu|^q\,\mathrm{d}x\mathrm{d}t\right],
	\end{align*}
	where we replace the parameters $q, R, r$ by $q+p-2, R-2d, r+d$ respectively on the right-hand side of \eqref{4.0}. Combining the above estimate with \eqref{6.0}, we have
	\begin{align*}
		&\quad\int_{B_{r+d}(x_0)}|\tau_hu(x,t)|^q\,\mathrm{d}x\\
		&\le\frac{C(N,p,q,s_1,s_p)}{S(R-r)^{N+s_pp+1}}\left[ \left( \mathcal{T}+1\right)^{q+p-2}+\int_{t-\frac{S}{2}}^{t}[u(\cdot,t)]^{q+p-2}_{W^{\sigma,q+p-2}(B_{R-d}(x_0))}\,\mathrm{d}t\right]|h|^{\sigma(q-1)}.
	\end{align*}
	For any $\alpha\in\left(0,\min\left\lbrace 1,\frac{s_pp}{p-1}\right\rbrace \right)$, we set
	\begin{align*}
		\tilde{\alpha}:=
		\begin{cases}
			\frac{2}{3}\alpha+\frac{s_pp}{3(p-1)}\quad&\text{if }s_p\le\frac{p-1}{p},\\[2mm]
			\frac{2}{3}\alpha+\frac{1}{3}\quad&\text{if }s_p>\frac{p-1}{p}.
		\end{cases}
	\end{align*}
	Letting $\sigma:=2\tilde{\alpha}-\alpha$ and $q=\frac{N}{\tilde{\alpha}-\alpha}$, we obtain, by Lemma \ref{lem11},
	\begin{align*}
		[u(\cdot,t)]^q_{W^{\tilde{\alpha},q}(B_r(x_0))}\le\frac{C\left[ \left( \mathcal{T}+1\right)^{q+p-2}+\int_{t-\frac{S}{2}}^{t}[u(\cdot,t)]^{q+p-2}_{W^{\sigma,q+p-2}(B_{R-d}(x_0))}\,\mathrm{d}t\right]}{S(R-r)^{N+s_pp+1}}.
	\end{align*}
	
	For $s_p\in\left(0,\frac{p-1}{p}\right]$, we utilize (i) of Theorem \ref{th14} and embedding result Lemma\ref{lem23} to get
	\begin{align*}
		[u(\cdot,t)]_{C^{0,\alpha}(B_r(x_0))}&\le C[u(\cdot,t)]_{W^{\tilde{\alpha},q}(B_r(x_0))}\\
		&\le \frac{C\left[ \left( \mathcal{T}+1\right)^2+\left( \int_{t-S}^{t}[u(\cdot,t)]^p_{W^{s_p,p}\left( B_R(x_0)\right)}\mathrm{d}t\right)^{\frac{2}{p}}\right] }{S^{\mu_2}(R-r)^{\mu_1}}.
	\end{align*}
	
	For $s_p\in\left( \frac{p-1}{p},1\right)$, after application of (ii) of Theorem \ref{th14}, Lemma \ref{lem23} and Lemma \ref{lem14.1}, we have
	\begin{align*}
		[u(\cdot,t)]_{C^{0,\alpha}(B_r(x_0))}&\le C[u(\cdot,t)]_{W^{\tilde{\alpha},q}(B_r(x_0))}\\
		&\le\frac{C\left[ \left( \mathcal{T}+1\right)^{\frac{q+p-2}{q}}+\left( \int_{t-\frac{S}{2}}^{t}[u(\cdot,t)]^{q+p-2}_{W^{\sigma,q+p-2}(B_{R-d}(x_0))}\,\mathrm{d}t\right) ^\frac{1}{q}\right]}{S(R-r)^{N+s_pp+1}}\\
		&\le\frac{C\left[ \left( \mathcal{T}+1\right)^{\frac{q+p-2}{q}}+\left( \int_{t-\frac{S}{2}}^{t}\int_{B_{R-d}(x_0)}|\nabla u|^{q+p-2}\,\mathrm{d}x\mathrm{d}t\right)^\frac{1}{q}\right]}{S(R-r)^{N+s_pp+1}}\\
		&\le\frac{C\left[ \left( \mathcal{T}+1\right)^2+\left( \int_{t-S}^{t}[u(\cdot,t)]^p_{W^{s_p,p}\left( B_R(x_0)\right)}\mathrm{d}t\right)^{\frac{2}{p}}\right] }{S^{\mu_2}(R-r)^{\mu_1}}.
	\end{align*}
	
	Next, we deal with the case $p\in(1, 2)$. Combining \eqref{3.0}, \eqref{4.1}, \eqref{4.2}, \eqref{15.4} and \eqref{15.5}, and using the non-negativity of $P_3$ from the proof of Proposition \ref{pro15}, one can obtain the following estimate
	\begin{align}
		\label{22.1}
		&\quad\int_{B_{r+d}(x_0)}|\tau_hu(x,t)|^{q}\,\mathrm{d}x\nonumber\\
		&\le \frac{C(N,p,q,s_1,s_p)}{S(R-r)^{N+s_pp+1}}\left(\int_{t-\frac{S}{2}}^{t}\int_{B_{R-2d}(x_0)}|\tau_hu|^{q-1}\,\mathrm{d}x\mathrm{d}t+\int_{t-\frac{S}{2}}^{t}\int_{B_{R-2d}(x_0)}|\tau_hu|^{q+p-2}\,\mathrm{d}x\mathrm{d}t\right.\nonumber\\
		&\quad\left.+\mathcal{T}^{p-1}|h|\int_{t-\frac{S}{2}}^{t}\int_{B_{R-d}(x_0)}|\tau_hu|^{q-1}\,\mathrm{d}x\mathrm{d}t+\int_{t-\frac{S}{2}}^{t}\int_{B_{R-d}(x_0)}|\tau_hu|^{q}\,\mathrm{d}x\mathrm{d}t\right).
	\end{align}
	We apply the method utilized in dealing with the case $p\ge2$ to get the desired result. Precisely, we make use of \eqref{18.1} to deduce
	\begin{align*}
		[u(\cdot,t)]^q_{W^{\tilde{\alpha},q}(B_r(x_0))}\le\frac{C\left[ \left( \mathcal{T}+1\right)^{q+p-2}+\int_{t-\frac{S}{2}}^{t}[u(\cdot,t)]^{q+p-2}_{W^{\sigma,q+p-2}(B_{R-d}(x_0))}\,\mathrm{d}t\right]}{S(R-r)^{N+s_pp+1}}.
	\end{align*}
	Finally, we apply (i) of Theorem \ref{th21}, Lemma \ref{lem23} in the case $s_p\in\left(0,\frac{p-1}{p}\right]$ and apply (ii) of Theorem \ref{th21}, Lemma \ref{lem23}, Lemma \ref{lem14.1} in the case $s_p\in\left( \frac{p-1}{p},1\right)$ to conclude
	\begin{align*}
		[u(\cdot,t)]_{C^{0,\alpha}(B_r(x_0))}\le\frac{C\left[ \left( \mathcal{T}+1\right)^2+\left( \int_{t-S}^{t}[u(\cdot,t)]^p_{W^{s_p,p}\left( B_R(x_0)\right)}\mathrm{d}t\right)^{\frac{2}{p}}\right] }{S^{\mu_2}(R-r)^{\mu_1}}.
	\end{align*}
\end{proof}

We are now in a position to study the regularity of weak solutions with respect to the time variable $t$. The following lemma will be instrumental in extracting the regularity information in Proposition \ref{pro26}. Its proof clearly illustrates how regularity information is balanced by the structure of the operators $(-\Delta_1)^{s_1}$ and $(-\Delta_p)^{s_p}$, and follows the approach developed in \cite[Proposition 6.2]{BLS21}.

Let $\bar{\eta}_{r}(x_0)$ and $\bar{\eta}_{r,\theta}(x_0,t_0)$ denote the averages of $\eta$ over the ball $B_r(x_0)$ and the cylinder $B_r(x_0)\times(t_0-\theta,t_0)$, respectively. When no confusion arises, we simply write $\bar{\eta}_r$ and $\bar{\eta}_{r,\theta}$.
\begin{lemma}
	\label{lem25}
	Let $p\in[1,+\infty)$ and suppose $u\in W^{s,p}(B_r)$. Then, for any non-negative function $\eta\in C^\infty_0(B_r)$ such that $\bar{\eta}_r=1$, there holds
	\begin{align*}
		\int_{B_r}\left| u-\overline{(u\eta)}_r\right|^p\,\mathrm{d}x\le\left(\frac{2^{N+sp}}{\omega_N}\|\eta\|^p_{L^\infty(B_r)}\right)r^{sp}[u]^p_{W^{s,p}(B_r)}. 
	\end{align*}
\end{lemma}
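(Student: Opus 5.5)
The plan is to use $\bar{\eta}_r=1$ to write the error $u-\overline{(u\eta)}_r$ as a weighted average of differences, apply Jensen's inequality, and then compare with the Gagliardo seminorm. Since $\bar{\eta}_r=1$, we have $\frac{1}{|B_r|}\int_{B_r}\eta(y)\,\mathrm{d}y=1$. Hence, for a.e. $x\in B_r$,
\begin{align*}
	u(x)-\overline{(u\eta)}_r=\frac{1}{|B_r|}\int_{B_r}\bigl(u(x)-u(y)\bigr)\eta(y)\,\mathrm{d}y .
\end{align*}
Since $\eta\ge0$, we bound $\eta(y)\le\|\eta\|_{L^\infty(B_r)}$. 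Jensen's (or H\"older's) inequality with respect to the normalized measure $\mathrm{d}y/|B_r|$ then gives, for $p\ge1$,
\begin{align*}
	\left|u(x)-\overline{(u\eta)}_r\right|^p\le \frac{\|\eta\|_{L^\infty(B_r)}^p}{|B_r|}\int_{B_r}|u(x)-u(y)|^p\,\mathrm{d}y .
\end{align*}

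Next, integrate over $x\in B_r$. For $x,y\in B_r$ we have $|x-y|<2r$, so $1\le (2r)^{N+sp}|x-y|^{-(N+sp)}$. Inserting this factor yields
\begin{align*}
	\int_{B_r}\left|u-\overline{(u\eta)}_r\right|^p\,\mathrm{d}x\le \frac{\|\eta\|^p_{L^\infty(B_r)}(2r)^{N+sp}}{|B_r|}\,[u]^p_{W^{s,p}(B_r)} .
\end{align*}
With $|B_r|=\omega_Nr^N$, the prefactor is $\frac{2^{N+sp}}{\omega_N}\|\eta\|^p_{L^\infty(B_r)}r^{sp}$, which is exactly the constant in the statement.

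No step here is a genuine obstacle; the argument is elementary. The only points to watch are that the normalization $\bar{\eta}_r=1$ is what lets $u(x)$ be written as an $\eta$-weighted average of itself, and that the sign condition $\eta\ge0$ is needed so that replacing $\eta$ by its supremum goes in the right direction, without an absolute value being lost inside the integral.
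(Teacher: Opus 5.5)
Your proof is correct and gives exactly the stated constant, with $\omega_N=|B_1|$ so that $|B_r|=\omega_N r^N$. The paper states this lemma without proof and takes it from the literature. Your route is the standard argument:
\begin{itemize}
\item write $u(x)-\overline{(u\eta)}_r=\frac{1}{|B_r|}\int_{B_r}(u(x)-u(y))\eta(y)\,\mathrm{d}y$;
\item pull out $\|\eta\|_{L^\infty(B_r)}$ and apply Jensen;
\item insert $1\le (2r)^{N+sp}|x-y|^{-N-sp}$ on $B_r\times B_r$.
\end{itemize}

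One small correction to your closing remark. The sign of $\eta$ is not needed for your step, since $|\eta(y)|\le\|\eta\|_{L^\infty(B_r)}$ holds regardless of sign. Where $\eta\ge0$ together with $\bar{\eta}_r=1$ actually helps is that $\eta(y)\,\mathrm{d}y/|B_r|$ is then a probability measure. Applying Jensen directly with respect to it yields the sharper factor $\|\eta\|_{L^\infty(B_r)}$ in place of $\|\eta\|^p_{L^\infty(B_r)}$. This is indeed sharper, because $\|\eta\|_{L^\infty(B_r)}\ge\bar{\eta}_r=1$.
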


\begin{proposition}
	\label{pro26}
	Let $p>1$, $s_1, s_p\in(0,1)$ and suppose that $u$ is a weak solution to problem \eqref{1.1} in the sense of Definition \ref{def1}, which satisfies
	\begin{align}
		\label{26.0}
		\sup\limits_{t\in(t_1,t_2)}[u(\cdot,t)]_{C^\delta(B_r(x_0))}+\operatorname*{ess\,sup}_{t\in(t_1,t_2)}\mathrm{Tail}\left(u;x_0,\frac{r}{2},t\right)+\|u\|_{L^\infty\left(B_{\frac{r}{2}}(x_0)\times(t_1,t_2)\right) }\le M_{\delta}
	\end{align}
	for any $\delta\in\left(s_p,\min\left\lbrace1,\frac{s_pp}{p-1}\right\rbrace \right)$ with $(t_1,t_2)\subset\subset(0,T)$ and some $M_\delta>1$. Then there exists a constant $C$ depending on $N, p, s_1, s_p$ and $\delta$ such that
	\begin{align*}
		|u(x,\tau_1)-u(x,\tau_2)|\le CM_\delta^p|\tau_1-\tau_2|^\gamma,
	\end{align*}
	where $\gamma:=\frac{1}{\frac{\max\left\lbrace s_1,s_pp\right\rbrace }{\delta}+1}$.
\end{proposition}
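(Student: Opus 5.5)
We follow the strategy of \cite[Proposition 6.2]{BLS21}: we compare $u$ at two times through a spatial average against a mollifier, and then trade the spatial H\"older modulus against a time estimate that comes from testing the equation. Fix $x\in B_{r/4}(x_0)$ and $t_1<\tau_1<\tau_2<t_2$, and set $\theta:=|\tau_1-\tau_2|$. Pick a scale $\rho\in(0,r/4)$, to be optimized later, and a nonnegative $\eta\in C^\infty_0(B_\rho(x))$ with $\bar\eta_\rho=1$, $\|\eta\|_\infty\le C$ and $|\nabla\eta|\le C\rho^{-1}$. Then split
\begin{align*}
|u(x,\tau_1)-u(x,\tau_2)|\le \sum_{i=1,2}\bigl|u(x,\tau_i)-\overline{(u\eta)}_\rho(\tau_i)\bigr|+\bigl|\overline{(u\eta)}_\rho(\tau_1)-\overline{(u\eta)}_\rho(\tau_2)\bigr|.
\end{align*}
By \eqref{26.0}, each of the first two terms is at most $CM_\delta\rho^\delta$: write $u(x,\tau_i)-\overline{(u\eta)}_\rho=\frac{1}{|B_\rho|}\int_{B_\rho(x)}(u(x,\tau_i)-u(y,\tau_i))\eta(y)\,\mathrm{d}y$ and use the $C^\delta$ seminorm. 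Lemma \ref{lem25} gives an equivalent route through the $W^{s,p}$ seminorm.

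For the third term, we use $\varphi(y,t)=\eta(y)\chi_{[\tau_1,\tau_2]}(t)$, suitably smoothed in time, in the weak formulation \eqref{1.2} on $(\tau_1,\tau_2)$. Dividing by $|B_\rho|$, this gives
\begin{align*}
\bigl|\overline{(u\eta)}_\rho(\tau_2)-\overline{(u\eta)}_\rho(\tau_1)\bigr|\le \frac{1}{|B_\rho|}\int_{\tau_1}^{\tau_2}\bigl(|\mathcal{A}_1(t)|+|\mathcal{A}_p(t)|\bigr)\,\mathrm{d}t,
\end{align*}
where $\mathcal{A}_1:=\iint Z\frac{\eta(y)-\eta(z)}{|y-z|^{N+s_1}}$ and $\mathcal{A}_p:=\iint J_p(u(y)-u(z))\frac{\eta(y)-\eta(z)}{|y-z|^{N+s_pp}}$.

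For $\mathcal{A}_1$ we use only $|Z|\le1$. The part of the double integral near the diagonal is bounded by $\int_{B_{2\rho}}\int\min\{\rho^{-1}|y-z|,1\}|y-z|^{-N-s_1}$, and the far part by $\int_{B_\rho}\int_{|y-z|>\rho}|y-z|^{-N-s_1}$. Together these give $|\mathcal{A}_1|\le C\rho^{N-s_1}$.

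For $\mathcal{A}_p$ we split into $B_{r/2}\times B_{r/2}$ and its complement. Locally, $|J_p(u(y)-u(z))|\le M_\delta^{p-1}|y-z|^{\delta(p-1)}$. Since $\delta(p-1)<s_pp$, the integral against $|\eta(y)-\eta(z)|\,|y-z|^{-N-s_pp}$ converges, giving a bound of order $M_\delta^{p-1}\rho^{N+\delta(p-1)-s_pp}$. In the nonlocal part $y\in B_\rho(x)$ and $|z-x_0|>r/2$, and we bound it by $C\rho^N r^{-s_pp}M_\delta^{p-1}$ using the $L^\infty$ and Tail bounds. The constraint $\delta<s_pp/(p-1)$ enters exactly here.

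Collecting the estimates, we get
\begin{align*}
|u(x,\tau_1)-u(x,\tau_2)|\le CM_\delta^{p}\Bigl(\rho^\delta+\theta\bigl(\rho^{-s_1}+\rho^{\delta(p-1)-s_pp}+1\bigr)\Bigr)\le CM_\delta^p\bigl(\rho^\delta+\theta\rho^{-\max\{s_1,s_pp\}}\bigr),
\end{align*}
using $\rho<1$ and $M_\delta>1$. We then choose $\rho=\theta^{1/(\delta+\max\{s_1,s_pp\})}$, which balances the two terms to $\theta^{\delta/(\delta+\max\{s_1,s_pp\})}=\theta^\gamma$. This is admissible whenever $\theta$ is small compared with $r$. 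For large $\theta$ the claim follows directly from $|u|\le M_\delta$, after enlarging $C$ in a way that depends on $r$; the paper's normalization absorbs this.

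The main obstacle is making the testing step rigorous. The function $\chi_{[\tau_1,\tau_2]}$ is not $C^1$ in time, so we replace it by piecewise linear cutoffs and pass to the limit using $u\in C(J;L^2)$. This is exactly what Definition \ref{def1} allows, because its boundary terms at $T_0,T_1$ can be taken at $\tau_1,\tau_2$. A second, smaller point is the $\mathcal{A}_p$ estimate: using the crude bound $\rho^{-s_pp}$ rather than $\rho^{\delta(p-1)-s_pp}$ is harmless but wasteful, and the stated $\gamma$ only requires the crude version.
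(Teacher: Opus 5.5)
Your argument is correct, but it takes a different route from the paper's.

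\textbf{The paper's route.} It proves a Campanato-type bound on the cylinders $\mathcal{Q}_{r,\theta}=B_r(x_0)\times(t_0-\theta,t_0]$. The mean oscillation is split into two pieces:
\begin{itemize}
\item $E_1$, controlled through Lemma \ref{lem25} by the $W^{s_p,p}$ seminorm; this step, like the $A_3$ bound, uses $\delta>s_p$;
\item $E_3$, controlled by testing with the same time-independent $\eta$.
\end{itemize}
This gives $CM_\delta r^\delta+CM_\delta^{p-1}\theta(r^{-s_1}+r^{-s_pp})$. The paper then couples $\theta=r^{\delta+\max\{s_1,s_pp\}}$ and invokes Da Prato's characterization \cite{D65} for an anisotropic parabolic metric, treating four cases.

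\textbf{Your route.} You work pointwise. The $C^\delta$ bound controls $u(x,\tau_i)-\overline{(u\eta)}_\rho(\tau_i)$ directly. The same testing controls the time increment of the weighted average. Optimizing $\rho$ against $\theta$ then gives $\gamma$ at once.

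\textbf{Comparison.} Your version is more elementary. It needs neither Lemma \ref{lem25} nor the Campanato--Da Prato theorem, and it has no case distinction. It also uses \eqref{26.0} exactly as stated, at the single center $x_0$. The Campanato route instead needs oscillation bounds on all small cylinders with varying centers and radii, which the paper leaves implicit. In exchange, the paper's route gives joint H\"older continuity in the parabolic metric. That is not needed here, since spatial regularity is assumed.

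\textbf{Minor repairs.}
\begin{itemize}
\item With $x\in B_{r/4}(x_0)$ and $\rho<r/4$, the support of $\eta$ can approach $\partial B_{r/2}(x_0)$. Then, in the nonlocal term, $|y-z|$ is not bounded below by a fixed multiple of $|z-x_0|$. Restrict to $\rho\le r/8$; this costs nothing, since large $\theta$ is handled separately.
\item The tail contribution $\theta M_\delta^{p-1}r^{-s_pp}$ is bounded by $\theta M_\delta^{p-1}\rho^{-s_pp}$ because $\rho<r$. So it does not need to be absorbed into an $r$-dependent constant.
\item The $r$-dependence you concede when $\theta$ is comparable to or larger than $r^{\delta+\max\{s_1,s_pp\}}$ is genuine. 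It is equally present, but hidden, in the paper's argument.
\item The time smoothing you call the main obstacle is unnecessary. Definition \ref{def1} already carries boundary terms at arbitrary $T_0,T_1$, so $\varphi=\eta$ can be inserted directly, as the paper does.
\end{itemize}
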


\begin{proof}
	We set the parabolic cylinder
	\begin{align}
		\mathcal{Q}_{r,\theta}(x_0,t_0)=B_r(x_0)\times(t_0-\theta,t_0],
	\end{align}
	where $\theta<\min\left\lbrace t_0,\frac{1}{4}\right\rbrace $ and $r<\frac{1}{4}$. Let $\eta\in C^{\infty}_0\left( B_{\frac{r}{2}}(x_0)\right) $ be a non-negative cut-off function satisfying $\eta\equiv\|\eta\|_{L^\infty\left( B_{\frac{r}{2}}(x_0)\right)}$ on $B_{\frac{r}{4}}(x_0)$, $\overline{\eta}_r=1$ and $|\nabla\eta|\le\frac{C}{r}$. Thus we have
	\begin{align*}
		\|\eta\|_{L^\infty\left( B_{\frac{r}{2}}(x_0)\right)}=\frac{1}{\left|B_{\frac{r}{4}}(x_0)\right|}\int_{B_{\frac{r}{4}(x_0)}}\eta\,\mathrm{d}x\le\frac{\left|B_{r}(x_0)\right|}{\left| B_{\frac{r}{4}}(x_0)\right|}\overline{\eta}_r=4^N.
	\end{align*}
	
	Now, we estimate $\mint_{\mathcal{Q}_{r,\theta}(x_0,t_0)}\left|u(x,t)-\overline{u}_{r,\theta}\right|\,\mathrm{d}x\mathrm{d}t$ as
	\begin{align}
		\label{26.1}
		&\quad\mint_{\mathcal{Q}_{r,\theta}(x_0,t_0)}\left|u(x,t)-\overline{u}_{r,\theta}\right|\,\mathrm{d}x\mathrm{d}t\nonumber\\
		&\le\mint_{\mathcal{Q}_{r,\theta}(x_0,t_0)}\left|u(x,t)-\overline{(u\eta)}_{r}(t)\right|\,\mathrm{d}x\mathrm{d}t\nonumber\\
		&\quad+\mint_{\mathcal{Q}_{r,\theta}(x_0,t_0)}\left|\overline{u}_{r,\theta}-\overline{(u\eta)}_{r,\theta}\right|\,\mathrm{d}x\mathrm{d}t\nonumber\\
		&\quad+\mint_{\mathcal{Q}_{r,\theta}(x_0,t_0)}\left|\overline{(u\eta)}_{r,\theta}-\overline{(u\eta)}_{r}(t)\right|\,\mathrm{d}x\mathrm{d}t\nonumber\\
		&=:E_1+E_2+E_3\nonumber\\
		&\le 2E_1+2E_3.
	\end{align}
	In the last line we utilize the fact that
	\begin{align*}
		E_2&=\left|\overline{u}_{r,\theta}-\overline{(u\eta)}_{r,\theta}\right|\\
		&=\mint_{\mathcal{Q}_{r,\theta}(x_0,t_0)}\left|u(x,t)-\overline{(u\eta)}_{r,\theta}\right|\,\mathrm{d}x\mathrm{d}t\\
		&\le \mint_{\mathcal{Q}_{r,\theta}(x_0,t_0)}\left( \left| u(x,t)-\overline{(u\eta)}_{r}(t)\right|+\left|\overline{(u\eta)}_{r,\theta}-\overline{(u\eta)}_{r}(t)\right| \right)\\
		&=E_1+E_3.
	\end{align*}
	Hence it is sufficient for us to estimate $E_1$ and $E_3$. Note that $u\in L^p_{\mathrm{loc}}\bigl(0,T;W^{s_p,p}_{\mathrm{loc}}(\Omega)\bigl)$, by applying Lemma \ref{lem25}, we have
	\begin{align}
		\label{26.2}
		E_1&\le C(N,s_p,p)\left( \frac{r^{s_pp}}{\mathcal{Q}_{r,\theta}(x_0,t_0)}\int_{t_0-\theta}^{t_0}[u(\cdot,t)]^p_{W^{s_p,p}(B_r(x_0))}\,\mathrm{d}t\right)^\frac{1}{p}\nonumber\\
		&\le C(N,s_p,p)M_\delta r^\delta.
	\end{align} 
	Next, we will estimate $E_3$. Observe that
	\begin{align}
		\label{26.3}
		&\quad\mint_{\mathcal{Q}_{r,\theta}(x_0,t_0)}\left|\overline{(u\eta)}_{r,\theta}-\overline{(u\eta)}_{r}(t)\right|\,\mathrm{d}x\mathrm{d}t\nonumber\\
		&=\int_{t_0-\theta}^{t_0}\left|\overline{(u\eta)}_{r,\theta}-\overline{(u\eta)}_{r}(t)\right|\,\mathrm{d}t\nonumber\\
		&=\int_{t_0-\theta}^{t_0}\left|\mint_{t_0-\theta}^{t_0}\left[ \overline{(u\eta)}_r(\upsilon)-\overline{(u\eta)}_r(t)\right]\,\mathrm{d}\upsilon\right|\,\mathrm{d}t\nonumber\\
		&\le\int_{t_0-\theta}^{t_0}\mint_{t_0-\theta}^{t_0}\left|\overline{(u\eta)}_r(\upsilon)-\overline{(u\eta)}_r(t)\right|\,\mathrm{d}\upsilon\mathrm{d}t\nonumber\\
		&\le\sup\limits_{T_0,T_1\in\left(t_0-\theta,t_0\right]}\left| \overline{(u\eta)}_r(T_0)-\overline{(u\eta)}_r(T_1)\right|.
	\end{align}
	We can use the weak formulation \eqref{1.2} with test function $\varphi(x,t)=\eta(x)$ to get
	\begin{align*}
		&\quad\left| B_r(x_0)\right| \left| \overline{(u\eta)}_r(T_0)-\overline{(u\eta)}_r(T_1)\right|\\
		&\le \int_{T_0}^{T_1}\int_{\mathbb{R}^N}\int_{\mathbb{R}^N}\frac{|\eta(x)-\eta(y)|}{|x-y|^{N+s_1}}\,\mathrm{d}x\mathrm{d}y\mathrm{d}t\\
		&\quad+\int_{T_0}^{T_1}\int_{\mathbb{R}^N}\int_{\mathbb{R}^N}\frac{J_p\left( u(x,t)-u(y,t)\right)\left( \eta(x)-\eta(y)\right)}{|x-y|^{N+s_pp}}\,\mathrm{d}x\mathrm{d}y\mathrm{d}t\\
		&\le\int_{T_0}^{T_1}[\eta]_{W^{s_1,1}(B_r(x_0))}\,\mathrm{d}t+2\int_{T_0}^{T_1}\int_{\mathbb{R}^N\backslash B_r(x_0)}\int_{B_{\frac{r}{2}}(x_0)}\frac{\eta(x)}{|x-y|^{N+s_1}}\,\mathrm{d}x\mathrm{d}y\mathrm{d}t\\
		&\quad+\int_{T_0}^{T_1}\int_{B_r(x_0)}\int_{B_r(x_0)}\frac{J_p\left( u(x,t)-u(y,t)\right)\left( \eta(x)-\eta(y)\right)}{|x-y|^{N+s_pp}}\,\mathrm{d}x\mathrm{d}y\mathrm{d}t\\
		&\quad+2\int_{T_0}^{T_1}\int_{\mathbb{R}^N\backslash B_r(x_0)}\int_{B_{\frac{r}{2}}(x_0)}\frac{J_p\left( u(x,t)-u(y,t)\right)\eta(x)}{|x-y|^{N+s_pp}}\,\mathrm{d}x\mathrm{d}y\mathrm{d}t\\
		&=:A_1+2A_2+A_3+2A_4.
	\end{align*}
	For the local term of $p$-growth $A_3$, by H\"{o}lder's inequality, we have for $\delta>s_p$,
	\begin{align*}
		A_3&\le[\eta]_{W^{s_p,p}(B_r(x_0))} \int_{T_0}^{T_1}\left(\int_{B_r(x_0)}\int_{B_r(x_0)}\frac{|u(x,t)-u(y,t)|^p}{|x-y|^{N+s_pp}}\,\mathrm{d}x\mathrm{d}y\right)^\frac{p-1}{p}\mathrm{d}t\\
		&\le CM_\delta^{p-1}r^{\frac{N}{p}-s_p}\int_{T_0}^{T_1}\left(\int_{B_r(x_0)}\int_{B_r(x_0)}|x-y|^{\delta p-N-s_pp}\,\mathrm{d}x\mathrm{d}y\right)^\frac{p-1}{p} \,\mathrm{d}t\\
		&\le CM_\delta^{p-1}|T_1-T_0|r^{N-s_pp+\delta(p-1)}\\
		&\le CM_\delta^{p-1}\theta r^{N-s_pp+\delta(p-1)}.
	\end{align*}
	For nonlocal part $A_4$, by Definition \ref{def1} and the assumption \eqref{26.0}, we obtain
	\begin{align*}
		A_4&\le C\|\eta\|_{L^\infty\bigl(B_{\frac{r}{2}}(x_0)\bigl)}\int_{T_0}^{T_1}\int_{\mathbb{R}^N\backslash B_r(x_0)}\int_{B_{\frac{r}{2}}(x_0)}\frac{\left| u(x,t)-u(y,t)\right|^{p-1}}{|x-y|^{N+s_pp}}\,\mathrm{d}x\mathrm{d}y\mathrm{d}t\\
		&\le C(N)\int_{T_0}^{T_1}\int_{\mathbb{R}^N\backslash B_r(x_0)}\int_{B_{\frac{r}{2}}(x_0)}\frac{|u(x,t)|^{p-1}}{|x-y|^{N+s_pp}}\,\mathrm{d}x\mathrm{d}y\mathrm{d}t\\
		&\quad+C(N)\int_{T_0}^{T_1}\int_{\mathbb{R}^N\backslash B_r(x_0)}\int_{B_{\frac{r}{2}}(x_0)}\frac{|u(y,t)|^{p-1}}{|x-y|^{N+s_pp}}\,\mathrm{d}x\mathrm{d}y\mathrm{d}t\\
		&\le C(N)\operatorname*{ess\,sup}_{t\in(t_1,t_2)}\mathrm{Tail}^{p-1}\left(u;x_0,\frac{r}{2},t\right)\theta r^{N-s_pp}+C(N)\|u\|^{p-1}_{L^\infty\left(B_{\frac{r}{2}}(x_0)\times(t_1,t_2)\right) }\theta r^{N-s_pp}\\
		&\le C(N)M^{p-1}_{\delta}\theta r^{N-s_pp}.
	\end{align*}
	
	To estimate $A_1$ and $A_2$, note that $|\nabla\eta|\le \frac{C}{r}$, we get
	\begin{align*}
		A_1\le \frac{C}{r}\int_{T_0}^{T_1}\int_{B_r(x_0)}\int_{0}^{2r}\frac{\mathrm{d}\rho\mathrm{d}x\mathrm{d}t}{\rho^{s_1}}\le C(N,s_1)\theta r^{N-s_1}
	\end{align*}
	and
	\begin{align*}
		A_2\le C(N)\int_{T_0}^{T_1}\int_{\mathbb{R}^N\backslash B_r(x_0)}\int_{B_{\frac{r}{2}}(x_0)}\frac{\mathrm{d}x\mathrm{d}y\mathrm{d}t}{|x-y|^{N+s_1}}\le C(N)\theta r^{N-s_1}.
	\end{align*}
	Combining the above estimates, we conclude
	\begin{align}
		\label{26.4}
		E_3\le CM_\delta^{p-1}\theta \left( r^{-s_1}+r^{-s_pp}\right).
	\end{align}
	Bringing \eqref{26.2} and \eqref{26.4} into \eqref{26.1}, one has 
	\begin{align}
		\label{26.5}
		\mint_{\mathcal{Q}_{r,\theta}(x_0, t_0)}\left|u(x,t)-\overline{u}_{r,\theta}\right|\,\mathrm{d}x\mathrm{d}t\le CM_{\delta}r^\delta+CM_\delta^{p-1}\theta \left( r^{-s_1}+r^{-s_pp}\right).
	\end{align}
	
	We now distinguish several cases as follows.
	\newline
	For the case 
	$\begin{cases}
		s_1\ge s_pp,\\
		\delta+s_1>1,
	\end{cases}$ we choose $\theta=r^{\delta+s_1}$ and obtain from \eqref{26.5} that
	\begin{align*}
		\mint_{\mathcal{Q}_{r,\theta}(x_0, t_0)}\left|u(x,t)-\overline{u}_{r, \theta}\right|\,\mathrm{d}x\mathrm{d}t\le CM_\delta^pr^\delta.
	\end{align*}
	By an application of \cite[Teorema 3.I]{D65} (see also in \cite[Theorem 3.2]{G09}), we know that $u$ is locally $\delta$-H\"{o}lder continuous in $B_\frac{r}{2}(x_0)\times(t_1, t_2)$ regarding the metric
	\begin{align*}
		\tilde{d_1}((x_1, \tau_1),(x_2, \tau_2))=|x_1-x_2|+|\tau_1-\tau_2|^\frac{1}{\delta+s_1}.
	\end{align*}
	Thus, we have the estimate
	\begin{align}
		\label{26.6}
		\sup_{x\in B_{\frac{r}{2}}(x_0),\,\tau_1,\tau_2\in (t_1,t_2)}|u(x,\upsilon_1)-u(x,\upsilon_2)|\le CM^p_\delta|\tau_1-\tau_2|^\gamma
	\end{align}
	with $\gamma=\frac{1}{\frac{s_1}{\delta}+1}$.
	
	For the case $\begin{cases}
		s_1\ge s_pp,\\
		\delta+s_1\le 1,
	\end{cases}$we set $r=\theta^{\frac{1}{\delta+s_1}}$ to deduce
	\begin{align*}
		\mint_{\mathcal{Q}_{r,\theta}(x_0,t_0)}\left|u(x,t)-\overline{u}_{r,\theta}\right|\,\mathrm{d}x\mathrm{d}t\le CM^p_\delta\theta^\frac{\delta}{\delta+s_1},
	\end{align*}
	which implies $u$ is locally $\frac{\delta}{\delta+s_1}$-H\"{o}lder continuous in $B_\frac{r}{2}(x_0)\times(t_1,t_2)$ regarding the metric
	\begin{align*}
		\tilde{d_2}((x_1,\tau_1),(x_2,\tau_2))=|x_1-x_2|^{\delta+s_1}+|\tau_1-\tau_2|
	\end{align*}
	and the estimate \eqref{26.6}.
	
	Similarly, for the remaining two cases, i.e.,
	\begin{align*}
		\begin{cases}
			s_1<s_pp,\\
			\delta+s_pp>1,
		\end{cases}\,\text{and}\quad
		\begin{cases}
			s_1<s_pp,\\
			\delta+s_pp\le1,
		\end{cases}
	\end{align*}
	we can get that $u$ is locally $\delta$-H\"{o}lder continuous in $B_\frac{r}{2}(x_0)\times(t_1,t_2)$ regarding the metric
	\begin{align*}
		\tilde{d_3}((x_1,\tau_1),(x_2,\tau_2))=|x_1-x_2|+|\tau_1-\tau_2|^\frac{1}{\delta+s_pp}
	\end{align*}
	and $u$ is locally $\frac{\delta}{\delta+s_pp}$-H\"{o}lder continuous in $B_\frac{r}{2}(x_0)\times(t_1,t_2)$ regarding the metric
	\begin{align*}
		\tilde{d_4}((x_1,\tau_1),(x_2,\tau_2))=|x_1-x_2|^{\delta+s_pp}+|\tau_1-\tau_2|.
	\end{align*}
	Hence,  in these two cases, we obtain the following estimate
	\begin{align*}
		\sup_{x\in B_{\frac{r}{2}}(x_0),\,\tau_1,\tau_2\in (t_1,t_2)}|u(x,\tau_1)-u(x,\tau_2)|\le CM^p_\delta|\tau_1-\tau_2|^\gamma
	\end{align*}
	with $\gamma=\frac{1}{\frac{s_pp}{\delta}+1}$. This completes this proof.
\end{proof}

By combining the Corollary \ref{cor22} with the Proposition \ref{pro26} and carefully computing the exponents involved, we are able to complete the proof of Theorem \ref{th27}.

\begin{proof}[Proof of Theorem \ref{th27}.]
	By Corollary \ref{cor22}, we can estimate that for any $\tilde{\alpha}\in \left( 0,\min\left\lbrace 1,\frac{s_pp}{p-1}\right\rbrace \right) $,
	\begin{align}
		\label{27.1}
		[u(\cdot, t)]_{C^{0,\tilde{\alpha}}(B_r(x_0))}\le \frac{C\left[ \left( \mathcal{T}+1\right)^2+\left( \int_{t-S}^{t}[u(\cdot,t)]^p_{W^{s_p,p}\left( B_R(x_0)\right)}\mathrm{d}t\right)^{\frac{2}{p}}\right] }{S^{\kappa'}(R-r)^{\kappa}}
	\end{align}
	for any $t\in(t_0-S,t_0]$. This inequality enables us to apply Proposition \ref{pro26} to obtain
	\begin{align}
		\label{27.2}
	&\quad	|u(x,\tau_1)-u(x,\tau_2)| \nonumber\\
		& \le \frac{C\left[ \left( \mathcal{T}+1\right)^{2p}+\left( \int_{t_0-2S}^{t_0}[u(\cdot,t)]^p_{W^{s_p, p}(B_R(x_0))}\,\mathrm{d}t\right) ^2\right] }{S^{\kappa'p}(R-r)^{\kappa p}}|\tau_1-\tau_2|^{\frac{\tilde{\alpha}}{\max\left\lbrace s_1, s_pp\right\rbrace+\tilde{\alpha}}}.
	\end{align}
	Therefore, for any $\gamma\in \left( 0,\Gamma\right)$, we fix $\tilde{\alpha}:=\frac{\gamma\max\left\lbrace s_1,s_pp\right\rbrace}{1-\gamma}$ and combine \eqref{27.1} and \eqref{27.2} to get the desired estimate.
\end{proof}

\subsection*{Acknowledgments}
This work was supported by the National Natural Science Foundation of China (No. 12471128) and China Scholarship Council (No. 202506120120). 


\subsection*{Conflict of interest} The authors declare that there is no conflict of interest. We further declare that this manuscript has no associated data.
	
\subsection*{Data availability}
No data was used for the research described in the article.

\end{document}